\newtheorem{assumption}[theorem]{Assumption}
\numberwithin{equation}{section}
\numberwithin{theorem}{section}
\newcommand{\meantmp}[2]{#1\langle{#2}#1\rangle}
\newcommand{\mean}[1]{\meantmp{}{#1}}
\newcommand{\Pidiv}{\ensuremath{\Pi^{\divergence}_h}}
\newcommand{\Rn}{{\setR^n}}
\newcommand{\PP}{\mathcal{P}}
\newcommand{\PPln}{\mathcal{P}^{{\log}}}
\providecommand{\bfST}{{\bfS_{\mathcal{T}}}}
\providecommand{\bfFT}{{\bfF_{\mathcal{T}}}}
\newcommand{\problemP}{\text{\rm \bf(P)}\xspace}
\newcommand{\problemQ}{\text{\rm \bf (Q)}\xspace}
\newcommand{\problemPh}{\text{\rm \bf (P$_h$)}\xspace}
\newcommand{\problemQh}{\text{\rm \bf (Q$_h$)}\xspace}
\newcommand{\px}{{p(\cdot)}}
\newcommand{\PiY}{\Pi^Y_h}
\renewcommand{\frq}{{\boldsymbol {\mathfrak q}}}
\renewcommand{\frp}{{\boldsymbol {\mathfrak p}}}
\journalname{Numerische Mathematik}
\begin{document}

\title{Convergence Analysis for a Finite Element Approximation of a
  Steady Model for Electrorheological Fluids}
\titlerunning{Finite Element for Electrorheological fluids}
\author{Luigi C. Berselli \and Dominic Breit \and Lars Diening}

\institute{Luigi C. Berselli \at Dipartimento di Matematica,
  Universit{\`a} di Pisa, Via F.~Buonarroti 1/c, I-56127 Pisa, ITALY.
  \\
  \email{berselli@dma.unipi.it} 
  \and 
  Dominic Breit \and Lars Diening \at Institute of Mathematics, LMU
  Munich Theresienstr., 39 D-80333 Munich, Germany
  \\
  \email{diening@math.lmu.de}, \email{breit@math.lmu.de}
}

\maketitle


\begin{abstract}
  In this paper we study the finite element approximation of systems
  of $\px$-Stokes type, where $\px$ is a (non constant) given function
  of the space variables. We derive --in some cases optimal-- error
  estimates for finite element approximation of the velocity and of
  the pressure, in a suitable functional setting.
  \\[3mm]
  \textbf{Keywords.} Error analysis, $\inf$-$\sup$ condition,
  velocity, pressure, conforming elements, variable exponents.
\end{abstract}


\maketitle


\section{Introduction}
\label{sec:intro}
The stationary flow of an incompressible homogeneous fluid in a
bounded domain $\Omega\subset\mathbb R^n$ is described by the set of
equations
\begin{align}
    \label{eq:ERF}
    - \divergence \mathcal{S}+\divergence( \bfv\otimes\bfv) + \nabla q
    &=\bff\,, \qquad \divergence \bfv = 0\quad \text{in}\quad \Omega.
\end{align}
Here $\bfv:\Omega\rightarrow\mathbb R^n$ and
$q:\Omega\rightarrow\mathbb R$ are the unknown velocity field and
pressure respectively, whereas $\bff:\Omega\rightarrow\mathbb R^n$ is
a given volume force. A popular model for Non-Newtonian (Newtonian
if $p=2$) fluids is the power-law model
\begin{align} 
  \label{eq:n3.3}
  \mathcal{S}=\mathcal{S}(\bfD\bfv)=\mu(\kappa+|\bfD\bfv|)^{p-2}\bfD\bfv,
\end{align}
with $\mu>0$, $\kappa\in[0,1]$, and $1 < p < \infty$. The extra stress
tensor $\mathcal{S}(\bfD \bfv)$ depends on $\bfD\bv:=\tfrac
12(\nabla\bv+\nabla\bv^\top)$, the symmetric part of the velocity
gradient $\nabla \bv$.  Physical interpretation and discussion of some
non-Newtonian fluid models can be found, e.g., in~\cite{bird,MRR1995}

In this paper we consider a further generalization of~\eqref{eq:n3.3},
which is motivated by a model introduced
in~\cite{0890.76007,rajagopal01:_mathem_model_elect_mater} to describe
motions of electrorheological fluids, further studied
in~\cite{MR1810360}. Electrorheological fluids are special smart
fluids, which change their material properties due to the application
of an electric field; especially the viscosity can locally change by a
factor of $10^3$ in 1ms. Electrorheological fluids can be used in the
construction of clutches and shock absorbers.  In the model introduced
in~\cite{rajagopal01:_mathem_model_elect_mater} the exponent $p$ is
not a fixed constant, but a function of the electric field $\bfE$, in
particular $p:=p(\abs{\bfE}^2)$.  The electric field itself is a
solution to the quasi--static Maxwell equations and is not influenced
by the motion of the fluid.  In a first preliminary step, it is then
justified to separate the Maxwell equation from~\eqref{eq:ERF} and to
study, for a given function $p : \Omega \to (1, \infty)$, the
system~\eqref{eq:ERF} with $\bfS : \Omega \times \setR_{\sym}^{n\times
  n} \to \setR_{\sym}^{n\times n}$ satisfying for all $x \in \Omega$
and for all $\bfeta \in \setR_{\sym}^{n\times n}$
\begin{align}
  \label{eq:1.3}
  \bfS(x,\bfeta)=\mu(\kappa+|\bfeta|)^{p(x)-2}\bfeta.
\end{align}
This model comprises all the mathematical difficulties of the full
system for electrorheological fluids (as in~\cite{MR1810360}) and the
results below can be directly extended to the general case. In this
first study we consider the case of a slow flow and therefore neglect
the convective term $\divergence(\bfv\otimes\bfv)=(\nabla\bfv)\bfv$.
A reintroduction of this term causes the usual difficulties as for
instance the possible non-uniqueness of the solution. This problem
also arises for the continuous problem. For small data and large
exponents~$p$ one can recover uniqueness. In this situation it should
be possible to generalize the results of this paper to the presence of
the convection term. However, a numerical analysis for small exponents
will be complicated, but this difficulty also appears for constant
exponents.

Therefore, as a first step (to focus on peculiar difficulties of
variable exponents) in this paper we study the numerical approximation 
of steady systems of the $\px$-Stokes type
%
\begin{equation}
  \label{eq:linear_pfluid}
  \begin{aligned}
    -\divo \bfS(\cdot,\bfD\bv)+\nabla q&=\bff\qquad&&\text{in
    }\Omega,
    \\
    -\divo\bv&=0\qquad&&\text{in }\Omega,
    \\
    \bfv &= \bfzero &&\text{on } \partial \Omega,
  \end{aligned}
\end{equation}
with $\bfS$ with variable exponent given by~\eqref{eq:1.3}.  Our
approach is based on conforming finite element spaces satisfying the
classical discrete inf-sup condition. We assume that
$\Omega\subset\setR^n$ is a polyhedral, bounded domain.

The mathematical investigation of fluids with shear-dependent
viscosities ($p=$const.) started with the celebrated works of
O.A.~Ladyzhenskaya~\cite{lady-bo} and J.-L.~Lions~\cite{Lio1969} in
the late sixties. In recent years there has been an enormous progress
in the understanding of this problem and we refer the reader
to~\cite{FMS,mnrr,Ruz2013} and the references therein for a detailed
discussion.

The first results regarding the numerical analysis date back
to~\cite{San1993}, with improvements in~\cite{BL1994b}) where the
error estimates are presented in the setting of quasi-norms. The
notion of quasi-norm is the natural one for this type of problem,
cf.~\cite{bb,BL1993a,
BL1994b,DieR07,LB1996}, since the
quasi-norm is equivalent to the distance naturally defined by the
monotone operator $-\divergence\mathcal{S}(\bfD \bfv)$. Given the
discrete solution $\bfv_h$ and the continuous solution $\bfv$, the
error is measured as the $L^2$-difference of $\mathcal{F}(\bfD\bfv)$
and $\mathcal{F}(\bfD\bfv_h)$, where
$\mathcal{F}(\bfeta)=(\kappa+|\bfeta|)^{\frac{p-2}{2}}\bfeta$.  For the
system~\eqref{eq:ERF}--\eqref{eq:n3.3}, (without convective term) the following error
estimates are shown for constant $p>1$ in~\cite{BeBeDiRu}:
  \begin{align}
    \label{eq:appr_h1a0}
    \norm{\mathcal{F}(\bfD \bfv)-\mathcal{F}(\bfD
      \bfv_h)}_2&\leq c\, h^{\min \set{1,\frac {p'}2}},
    \\
    \label{eq:appr_h1b0}
    \|q-q_h\|_{p'} &\leq c\,h^{\min\big\{\frac{p'}{2},\frac{2}{p'}\big\}}.
  \end{align}
  For the validity of the above estimates the natural assumptions that
  $\mathcal{F}(\bfD \bfv) \in (W^{1,2}(\Omega))^{n\times n}$ and also
  that $q \in W^{1,p'}(\Omega)$ are made. The convergences rates
  in~\eqref{eq:appr_h1a0} and~\eqref{eq:appr_h1b0} are the best known
  ones.

  The purpose of the present paper is to extend the
  estimates~\eqref{eq:appr_h1a0}-\eqref{eq:appr_h1b0} to the setting
  of variable exponents for the $p(\cdot)$-Stokes system.  Since the
  development of the model for electrorheological fluids
  in~\cite{0890.76007,rajagopal01:_mathem_model_elect_mater,Ruz2004}
  there has been a huge progress regarding its mathematical
  analysis~\cite{dms,AcMi,BF,DER} and especially the precise
  characterization of the corresponding functional
  setting~\cite{DiHaHaRu}. However there are only very few results
  about the numerical analysis, see for instance results for the
  time-discretization in~\cite{Die2002}.

  We observe that in~\cite{ECP} a time-dependent system with the same
  stress-tensor~\eqref{eq:1.3} and (smoothed) convective terms is
  studied and the convergence of the finite element approximation is
  shown without convergence rate. To our knowledge no quantitative
  estimate on the convergence rate for problems with variable
  exponents is known, while recent results for the $\px$-Laplacian
  (i.e. the scalar system without pressure) can be found
  in~\cite{BrDiSc}

  The main purpose of the present paper is to obtain precise
  convergence estimates for the system~\eqref{eq:linear_pfluid},
  generalizing to the variable exponent the
  estimates~\eqref{eq:appr_h1a0}-\eqref{eq:appr_h1b0}. By assuming
  H\"older regularity on the exponent $\px$, the main results we will
  prove (see Theorem~\ref{thm:appr_h1}) are the following estimates
  \begin{align*}
    \norm{\bfFT(\cdot,\bfD \bfv)-\bfFT(\cdot,\bfD \bfv_h)}_2&\leq c\,
    \big(h^{\min \set{1,\frac {(p^+)'}2}}+h^\alpha\big),
    \\
    \|q-q_h\|_{p'(\cdot)} &\leq
    c\,\Big(h^{\frac{\min\set{((p^+)')^2,4}}{2(p^{-})'}}+h^{\alpha}\Big).
  \end{align*}
  Here $\bfFT$ is a locally constant approximation to
  $\bfF(x,\bfeta)=(\kappa+|\bfeta|)^{\frac{p(x)-2}{2}}\bfeta$,
  see~\eqref{eq:def:F_Tau}, the number $\alpha\in(0,1]$ is the
  H\"older exponent of $\px$ and $p^+$ and $p^{-}$ supremum and
  infimum value of $\px$, respectively. As usual $(p^+)'$ and $(p^-)'$
  are their conjugate exponents. Our analysis is
  based on the recent studies in~\cite{BrDiSc} about the finite
  element approximation of the $\px$-Laplacian and on the numerical
  analysis in~\cite{BeBeDiRu} for the $p$-Stokes system.
  \\
  \\
  \textbf{Plan of the paper:} In Sec.~\ref{sec:prel} we recall the
  basic results on variable exponent space, we will use. In
  Sec.~\ref{sec:problem} we recall the basic existence results for the
  $p(\cdot)$-Stokes system, the finite element setting, and we state
  the main results of the paper.  The convergence analysis of the
  velocity is presented in Sec.~\ref{sec:velocity} and the convergence
  analysis of the pressure in Sec.~\ref{sec:appr-with-pressure}. An
  appendix with some technical results on Orlicz spaces is also added
\section{Variable exponent spaces}
\label{sec:prel}
\noindent
For a measurable set $E \subset \Rn$ let $\abs{E}$ be the Lebesgue
measure of~$E$ and $\chi_E$ its characteristic function. For $0 <
\abs{E} < \infty$ and $f \in L^1(E)$ we define the mean value of $f$
over $E$ by
\begin{align*}
   \mean{f}_E:=\dashint_{E}f \,dx:=\frac{1}{\abs{E}}\int_E f \,dx.
\end{align*}
For an open set $\Omega \subset \Rn$ let $L^0(\Omega)$ denote the set
of measurable functions.

Let us introduce the spaces of variable exponents~$L^\px$.  We use the
same notation used in the recent book~\cite{DiHaHaRu}.  We define
$\PP$ to consist of all $p \in L^0(\Rn)$ with $p\,:\, \Rn \to
[1,\infty]$ (called variable exponents).  For $p \in\PP$ we define
$p^-_\Omega := \essinf_\Omega p$ and $p^+_\Omega := \esssup_\Omega p$.
Moreover, let $p^+ := p^+_{\Rn}$ and $p^- := p^-_{\Rn}$.

For $p \in \PP$ the generalized Lebesgue space $L^{\px}(\Omega)$ is
defined as
\begin{align*}
  L^\px(\Omega):=\biggset{f \in L^0(\Omega) \,:\,
    \norm{f}_{L^\px(\Omega)} < \infty},
\end{align*}
where
\begin{align*}
  \norm{f}_{\px}:=\norm{f}_{L^\px(\Omega)}:=
  \inf\biggset{\lambda>0\,:\,\int_{\Rn} \biggabs{
      \frac{f(x)}{\lambda}}^{p(x)} \,dx \leq 1}.
\end{align*}
By using a standard notation, by  $\|\cdot\|_p$ we mean
the usual Lebesgue 
$L^p$-norm, for a fixed $p\geq1$. %
%
We say that a function $g \colon \Rn \to \setR$ is {\em
  $\log$-H{\"o}lder continuous} on $\Omega$ if there exist constants
$c \geq 0$ and $g _\infty \in \setR$ such that
\begin{align*}
  \abs{g (x)-g (y)} &\leq \frac{c}{\log
    (e+1/\abs{x-y})} &&\text{and}& \abs{g (x) - g _\infty}
  &\leq \frac{c}{\log(e + \abs{x})},
\end{align*}
for all $x\not=y\in \Rn$. The first condition describes the so called
local $\log$-H{\"o}lder continuity and the second the decay condition.
The smallest such constant~$c$ is the $\log$-H{\"o}lder constant
of~$g$. We define $\PPln$ to consist of those exponents $p\in
\PP$ for which $\frac{1}{p} \,:\, \Rn \to [0,1]$ is $\log$-H{\"o}lder
continuous. By $p_\infty$ we denote the limit of~$p$ at infinity,
which exists for $p \in \PPln$.  If $p \in \PP$ is bounded, then $p
\in \PPln$ is equivalent to the $\log$-H{\"o}lder continuity of $p$.
However, working with $\frac 1p$ gives better control of the constants
especially in the context of averages and maximal functions.
Therefore, we define $c_{\log}(p)$ as the $\log$-H{\"o}lder constant
of~$1/p$. Expressed in~$p$ we have for all $x,y \in \Rn$
\begin{align*}
  \abs{p(x) - p(y)} \leq \frac{(p^+)^2 c_{\log}(p)}{\log
    (e+1/\abs{x-y})} \qquad \text{and }\qquad \abs{p(x) - p_\infty}
  \leq \frac{(p^+)^2 c_{\log}(p)}{\log(e + \abs{x})}.
\end{align*}
For a cube $Q \subset \setR^n$ we denote by $\ell(Q)$
its side length and we have the following results.
\begin{lemma}[Lemma 2.1 in~\cite{BrDiSc}]
  \label{lem:pxpy} 
  Let $p \in \PPln(\Rn)$ with $p^+<\infty$ and $m>0$. Then for every
  cube $Q \subset \Rn$ with $\ell(Q) \leq 1$, $\kappa\in [0,1]$, and $t\geq
  0$ such that $\abs{Q}^m \leq t \leq \abs{Q}^{-m}$, then 
  \begin{align*}
    (\kappa + t)^{p(x)-p(y)} \leq c,
  \end{align*}
  for all $x,y \in Q$. The constant depends on $c_{\log}(p)\,,m$, and
  $p^+$.
\end{lemma}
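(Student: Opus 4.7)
The plan is to reduce everything to bounding $\abs{Q}^{-m\abs{p(x)-p(y)}}$ and then apply the log-H\"older continuity of $p$.

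First I would collect the elementary bounds on $\kappa+t$. Since $\kappa\in[0,1]$, $\ell(Q)\leq 1$ (so $\abs{Q}\leq 1$ and $\abs{Q}^{-m}\geq 1$), and $\abs{Q}^{m}\leq t\leq \abs{Q}^{-m}$, we get
\begin{equation*}
  \abs{Q}^{m}\;\leq\;\kappa+t\;\leq\;1+\abs{Q}^{-m}\;\leq\;2\abs{Q}^{-m}.
\end{equation*}
Splitting into the two cases $\kappa+t\geq 1$ and $\kappa+t\leq 1$, and using that $(\kappa+t)^{p(x)-p(y)}$ is maximized by taking absolute value of the exponent in whichever direction is ``bad'', I would obtain in both cases
\begin{equation*}
  (\kappa+t)^{p(x)-p(y)} \;\leq\; 2^{\abs{p(x)-p(y)}}\,\abs{Q}^{-m\abs{p(x)-p(y)}}
  \;\leq\; 2^{p^+}\,\abs{Q}^{-m\abs{p(x)-p(y)}}.
\end{equation*}

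The task then reduces to showing that $\abs{Q}^{-m\abs{p(x)-p(y)}}\leq c$ uniformly in $x,y\in Q$. Writing $\abs{Q}=\ell(Q)^n$ and taking logarithms, this is equivalent to a uniform upper bound on $nm\abs{p(x)-p(y)}\cdot\bigl(-\log\ell(Q)\bigr)$. For $x=y$ the bound is trivial; for $x\neq y$ we use $\abs{x-y}\leq\sqrt{n}\,\ell(Q)$, so that
\begin{equation*}
  \log\!\left(e+\frac{1}{\abs{x-y}}\right)\;\geq\;\log\!\left(e+\frac{1}{\sqrt{n}\,\ell(Q)}\right).
\end{equation*}
Combined with the log-H\"older bound $\abs{p(x)-p(y)}\leq (p^+)^2 c_{\log}(p)/\log(e+1/\abs{x-y})$ from the excerpt, this yields
\begin{equation*}
  nm\abs{p(x)-p(y)}\,\bigl(-\log\ell(Q)\bigr)
  \;\leq\; \frac{nm(p^+)^2 c_{\log}(p)\,(-\log\ell(Q))}{\log\bigl(e+1/(\sqrt{n}\,\ell(Q))\bigr)}.
\end{equation*}

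The main (and only) genuine step is to check that the right-hand side stays bounded as $\ell(Q)$ ranges over $(0,1]$. For $\ell(Q)$ bounded away from $0$ both factors $-\log\ell(Q)$ and $\abs{p(x)-p(y)}\leq p^+$ are trivially bounded. For $\ell(Q)\to 0$ the denominator grows like $-\log\ell(Q)$ and cancels the numerator's factor $-\log\ell(Q)$, so the ratio tends to the constant $nm(p^+)^2 c_{\log}(p)$. Thus the quantity above is bounded by a constant depending only on $n$, $m$, $p^+$, and $c_{\log}(p)$, and exponentiating gives $\abs{Q}^{-m\abs{p(x)-p(y)}}\leq c$, which together with the earlier reduction proves the claim. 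No step is really an obstacle; the only point to handle carefully is matching the log-H\"older denominator $\log(e+1/\abs{x-y})$ with $-\log\ell(Q)$ uniformly over the range of $\ell(Q)$.
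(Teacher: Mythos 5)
Your proof is correct and follows essentially the standard route: reduce to bounding $\abs{Q}^{-m\abs{p(x)-p(y)}}$, then apply the log-H\"older estimate $\abs{p(x)-p(y)}\leq (p^+)^2 c_{\log}(p)/\log(e+1/\abs{x-y})$ together with $\abs{x-y}\leq\sqrt{n}\,\ell(Q)$ and the observation that $(-\log\ell(Q))/\log(e+1/(\sqrt{n}\,\ell(Q)))$ is bounded on $(0,1]$. The paper itself gives no proof (it cites Lemma~2.1 of~\cite{BrDiSc}), and this is precisely the argument used there; the only (harmless) gap is that the final boundedness claim is stated by appeal to a limit rather than with an explicit uniform bound, but that is easy to supply.
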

For every convex function~$\psi$ and every cube~$Q$ we have by
Jensen's inequality
\begin{align}
  \label{eq:jensen}
  \psi\bigg(\dashint_Q \abs{f(y)}\,dy\bigg) &\leq \dashint_Q \psi(
  \abs{f(y)})\,dy.
\end{align}
This simple but crucial estimate allows for example to transfer the
$L^1$-$L^\infty$ estimates for the interpolation operators to the
setting of Orlicz spaces, see~\cite{DieR07}. A suitable analogue for
variable exponent spaces bounds $ ( \dashint_Q \abs{f(y)} \,dy
)^{p(x)}$ in terms of $\dashint_Q \abs{f(x)}^{p(x)} \,dx$ (but an additional
error term appears). In order to quantify this let us introduce the
notation
\begin{align*}
  \phi(x,t) &:= t^{p(x)},\quad 
  (M_Q \phi)(t) := \dashint_Q \phi(x,t)\,dx,\quad
  M_Q f := \dashint_Q \abs{f(x)}\,dx.
\end{align*}
For our finite element analysis we need this estimate extended to the
case of shifted Orlicz functions. For constant~$p$ this has been done
in~\cite{DieR07}. We define the shifted functions $\phi_a$ for $a\geq
0$ by
\begin{align*}
  \phi_a(x,t) &:= \int_0^t \frac{\phi'(x,a+\tau)}{a+\tau} \tau\,d\tau,
\end{align*}
 where the prime denotes the partial derivative of
$\phi(x,t)$ with respect to the variable $t$.

Then $\phi_a(x,\cdot)$ is the shifted N-function of $t \mapsto
t^{p(x)}$, see~\eqref{eq:def_shift}. Note that characteristics and
$\Delta_2$-constants of $\phi_a(x,\cdot)$ are uniformly bounded with
respect to~$a\geq 0$ if $1 < p^- \leq p^+ < \infty$, see
Section~\ref{sec:Orlicz spaces}.

We recall three fundamental results we will use in the sequel.
\begin{theorem}[Shifted key estimate, Thm. 2.5 in~\cite{BrDiSc}]
  \label{thm:jensenpxshift}
  Let $p \in \PPln(\Rn)$ with $p^+<\infty$. Then for every $m>0$ there
  exists $c_1>0$ only depending on $m$, $c_{\log}(p)$, and $p^+$
  such that
  \begin{align*}
    \phi_a(x,M_Q f)&\leq c\, M_Q(\phi_a(|f|)) + c\, \abs{Q}^{m},
  \end{align*}
  for every cube (or ball) $Q \subset \Rn$ with $\ell(Q) \leq 1$, all
  $x \in Q$ and
  all $f\in L^1(Q)$ with
  \begin{align*}
    a+\dashint_Q \abs{f}\,dy \leq \max \set{1, \abs{Q}^{-m}} =
    \abs{Q}^{-m}. 
  \end{align*}
\end{theorem}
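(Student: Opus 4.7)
The plan is to extend the unshifted variable-exponent Jensen-type inequality (cf.~\cite{DiHaHaRu}) to the shifted setting by combining convexity of $\phi_a(x,\cdot)$ with the log-Hölder transfer supplied by Lemma~\ref{lem:pxpy}. Throughout I fix $x \in Q$ and exploit the fact that, for $1 < p^- \leq p^+ < \infty$, the shifted N-function $\phi_a(x,\cdot)$ satisfies $\Delta_2$ with constants independent of $a \geq 0$ and of $x$, together with the pointwise equivalence
\[
  \phi_a(x,t) \sim (a+t)^{p(x)-2}\, t^2
\]
uniformly in $a,t \geq 0$.

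First I would apply Jensen's inequality for the convex N-function $\phi_a(x,\cdot)$ to obtain
\[
  \phi_a(x, M_Q f) \leq \dashint_Q \phi_a(x, |f(y)|)\,dy.
\]
This reduces the theorem to proving the pointwise comparison
\[
  \phi_a(x, |f(y)|) \leq c\,\phi_a(y, |f(y)|) + c\,|Q|^{m} \qquad \text{for a.e. } y\in Q,
\]
after which averaging in $y$ yields the claim.

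For the pointwise comparison I would split $Q$ according to the size of $s := a + |f(y)|$ into three regimes. \emph{(i) Small regime} $s \leq |Q|^{m/p^-}$: here the trivial bound $\phi_a(x,|f(y)|) \leq C s^{p(x)} \leq C s^{p^-} \leq C |Q|^{m}$ is absorbed into the error term. \emph{(ii) Moderate regime} $|Q|^{m_0} \leq s \leq |Q|^{-m_0}$ with $m_0 := m/p^-$: Lemma~\ref{lem:pxpy} applied to $s$ gives $s^{p(x)-p(y)} \leq c$, and combining with the equivalence above yields $\phi_a(x,|f(y)|) \leq c\,\phi_a(y,|f(y)|)$. \emph{(iii) Large regime} $s > |Q|^{-m_0}$: here $|f(y)|$ exceeds the mean-field bound pointwise, so Lemma~\ref{lem:pxpy} does not apply directly; instead I would use the hypothesis $a + \dashint_Q |f|\,dy \leq |Q|^{-m}$ together with Chebyshev's inequality to bound the measure of the tail set by a higher power of $|Q|$, and combine this with the log-Hölder decay $|p(x)-p(y)| \leq c/\log(1/\ell(Q))$ to control the residual factor $s^{|p(x)-p(y)|}$ when integrated against $\phi_a(y,|f(y)|) \sim s^{p(y)}$. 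By choosing $m_0$ suitably in terms of $m$, $p^\pm$ and $c_{\log}(p)$, the resulting contribution is absorbable either into $M_Q \phi_a(|f|)$ or into $c|Q|^m$.

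The main obstacle is the large regime (iii): here one loses direct access to Lemma~\ref{lem:pxpy} and the argument must balance the smallness of the bad-set measure coming from the mean bound against the possibly large integrand $\phi_a(x,|f(y)|)$. The delicate parameter accounting between $m$, $m_0$, $p^\pm$ and $c_{\log}(p)$ is what forces the specific form of the hypothesis $a + \dashint_Q|f|\,dy \leq |Q|^{-m}$ and drives the constant $c_1$ to depend on exactly the quantities stated. Once this accounting is carried out, the three regimes combine to give the announced pointwise estimate, and a final integration in $y$ completes the proof.
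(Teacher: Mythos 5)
The paper itself supplies no proof of Theorem~\ref{thm:jensenpxshift}; it is quoted verbatim as Theorem~2.5 of~\cite{BrDiSc}. Evaluating your argument on its own terms, the opening Jensen step is correct, but it commits you to establishing the pointwise comparison $\phi_a(x,|f(y)|) \le c\,\phi_a(y,|f(y)|) + c\,|Q|^m$ for a.e.~$y \in Q$, and this comparison is false. Since $\phi_a(x,t)/\phi_a(y,t) \sim (a+t)^{p(x)-p(y)}$, whenever $p(x)>p(y)$ and $s := a+|f(y)|$ is large the factor $s^{\,p(x)-p(y)}$ is unbounded: the $\log$-H\"older decay $|p(x)-p(y)| \le c/\log(1/\ell(Q))$ tames it only when $s$ itself lies in a fixed power window of $|Q|$, and the hypothesis $a+\dashint_Q|f|\,dy \le |Q|^{-m}$ constrains the \emph{average}, not $|f(y)|$ pointwise. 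Your regime~(iii) Chebyshev repair cannot close the gap: the tail set $\{a+|f| > |Q|^{-m_0}\}$ does have small relative measure, but on it the integrand $\phi_a(x,|f(y)|)$ has no a priori bound whatsoever ($f$ is only in $L^1$, and there is no higher integrability to trade against the measure estimate). Taking $f = |Q|^{-M}\chi_E$ with $|E|$ tuned so that $\dashint_Q|f| = |Q|^{-m}$, one finds that $\dashint_Q \phi_a(x,|f|)\,dy$ exceeds $M_Q\phi_a(|f|) + |Q|^m$ by a factor of order $e^{cM}$ for any $M$; so the intermediate inequality you require after Jensen simply does not hold, even though the theorem itself is true for this $f$.

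The structural repair is to reverse the order of the two operations. First dispose of the trivial case $a + M_Q f \le |Q|^{m_0}$ with $m_0 := m/p^-$, where $\phi_a(x,M_Q f) \le c\,(a+M_Qf)^{p(x)} \le c\,|Q|^{m_0 p^-} = c\,|Q|^m$. Otherwise $|Q|^{m_0} < a + M_Q f \le |Q|^{-m}$, so Lemma~\ref{lem:pxpy} applies \emph{at the single number} $t_0 := a + M_Q f$ and yields $\phi_a(x,M_Q f) \le c\,\psi_a(M_Q f)$, where $\psi_a$ is the shifted $N$-function of $t\mapsto t^{p^-_Q}$ with the frozen exponent $p^-_Q := \essinf_Q p$. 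Only \emph{then} apply Jensen for the fixed convex function $\psi_a$. The remaining pointwise step, $\psi_a(|f(y)|) \le c\,\phi_a(y,|f(y)|) + c\,|Q|^m$, holds for \emph{all} values of $|f(y)|$ — unlike the two-sided version you attempt — because $p^-_Q \le p(y)$: the dangerous factor $(a+t)^{p^-_Q-p(y)}$ is $\le 1$ whenever $a+t \ge 1$, the window $|Q|^{m_0} \le a+t < 1$ is handled by $\log$-H\"older exactly as in your regime~(ii), and $a+t < |Q|^{m_0}$ is absorbed into $c\,|Q|^m$. The one-sidedness gained by freezing the exponent at the local infimum, together with the fact that Lemma~\ref{lem:pxpy} is invoked only at the controlled quantity $M_Q f$ rather than at the uncontrolled $|f(y)|$, is precisely what your proposal is missing.
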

\begin{theorem}[Shifted Poincar\'{e} inequality, Thm. 2.4 in~\cite{BrDiSc}]
  \label{thm:poincareshift}
  Let $p \in \PPln(\Rn)$ with $p^+<\infty$. Then for every $m>0$ there
  exists $c>0$ only depending on $m$,  $c_{\log}(p)$, and $p^+$
  such that
  \begin{align*}
    \int_Q \phi_a\bigg(x,
    \frac{\abs{u(x)-\mean{u}_Q}}{\ell(Q)} \bigg)\,dx&\leq c\,
    \int_Q \phi_a(x,\abs{\nabla u(x)})\,dx + c\, \abs{Q}^{m},
  \end{align*}
  for every cube (or ball) $Q \subset \Rn$ with $\ell(Q) \leq 1$ and for all
  all $u\in W^{1,\px}(Q)$ with
  \begin{align*}
    a+\dashint_Q \abs{\nabla u}\,dy \leq \max \set{1, \abs{Q}^{-m}} =
    \abs{Q}^{-m}. 
  \end{align*}
\end{theorem}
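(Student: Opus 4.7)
The plan is to reduce the shifted Poincaré inequality to the shifted key estimate (Theorem~\ref{thm:jensenpxshift}), starting from the classical pointwise Poincaré bound. Writing $u(x) - \langle u \rangle_Q = \dashint_Q (u(x)-u(y))\,dy$ and integrating $\nabla u$ along the segment from $x$ to $y$ (which lies in the convex cube $Q$), one has the pointwise estimate
$$
\frac{|u(x) - \langle u \rangle_Q|}{\ell(Q)} \leq c \dashint_Q \int_0^1 |\nabla u(x + t(y-x))|\, dt\, dy,
$$
which, via a dyadic decomposition around $x$, I would rewrite as $c\sum_{k \geq 0} 2^{-k}\, M_{Q_k(x)}(|\nabla u|)$, where $Q_k(x) \subset Q$ denotes a sub-cube of side length $2^{-k}\ell(Q)$ containing~$x$.

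Next I apply $\phi_a(x,\cdot)$. By convexity, Jensen with weights $2^{-k}$, and the uniform $\Delta_2$-property of $\phi_a(x,\cdot)$ recalled in Section~\ref{sec:Orlicz spaces}, this yields
$$
\phi_a\!\left(x,\, \tfrac{|u(x)-\langle u\rangle_Q|}{\ell(Q)}\right) \leq c \sum_{k\geq 0} 2^{-k}\, \phi_a\bigl(x,\, M_{Q_k(x)}(|\nabla u|)\bigr).
$$
For each $k$, Theorem~\ref{thm:jensenpxshift} applied on $Q_k(x)$ produces
$$
\phi_a\bigl(x,\, M_{Q_k(x)}|\nabla u|\bigr) \leq c\, M_{Q_k(x)}\bigl(\phi_a(|\nabla u|)\bigr) + c\, |Q_k(x)|^{m'}.
$$
Summed in $k$, the error contributions give $\sum_k 2^{-k}|Q_k(x)|^{m'} = |Q|^{m'}\sum_k 2^{-k(1+nm')} \leq c\,|Q|^{m'}$, and a Fubini calculation bounds $\int_Q \sum_k 2^{-k}\, M_{Q_k(x)}(\phi_a(|\nabla u|))\, dx \leq c \int_Q \phi_a(y,|\nabla u(y)|)\, dy$ (trivially, since only sub-cubes of $Q$ appear). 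Putting the two contributions together and adjusting $m$ in terms of $m'$ yields the claim.

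The main obstacle is propagating the global hypothesis $a + \dashint_Q |\nabla u|\, dy \leq |Q|^{-m}$ to the sub-cubes $Q_k(x)$ that arise in Step~1: since $\dashint_{Q_k}|\nabla u|$ can grow like $2^{kn}\dashint_Q |\nabla u|$ while $|Q_k|^{-m}$ only grows like $2^{knm}|Q|^{-m}$, directly inheriting the hypothesis with the same $m$ is not possible. One must therefore invoke Theorem~\ref{thm:jensenpxshift} with a strictly larger parameter~$m'$ on sub-cubes and then rename $m$ at the end to stay within the desired error tolerance. The log-Hölder continuity of~$p$ enters through Lemma~\ref{lem:pxpy}, which is built into Theorem~\ref{thm:jensenpxshift} and controls how $\phi_a(x,\cdot)$ varies with $x$ within a small cube at the cost of the permissible $|Q|^m$ error.
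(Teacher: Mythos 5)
Your proposal is essentially correct and takes the same route as the cited proof in \cite{BrDiSc} (Thm.~2.4), which the present paper invokes without proof but whose method is confirmed by the sketch of the Korn analogue (Theorem~\ref{thm:kornshift}): a pointwise Riesz-potential/segment-integral representation of $u-\mean{u}_Q$, a dyadic decomposition into averages over nested sub-cubes $Q_k(x)\subset Q$, and an application of the shifted key estimate (Theorem~\ref{thm:jensenpxshift}) on each scale. You correctly identify and resolve the one delicate point, namely that the smallness hypothesis must be transferred to the sub-cubes with a larger parameter $m'\geq\max\{m,1\}$, which is harmless since $\abs{Q}^{m'}\leq\abs{Q}^{m}$ when $\ell(Q)\leq 1$.
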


\begin{theorem}[shifted Korn inequality]
  \label{thm:kornshift}
  Let $p \in \PPln(\Rn)$ with $p^+<\infty$. Then for every $m>0$ there
  exists $c>0$ only depending on $m$, $c_{\log}(p)$, and $p^+$
  \begin{align*}
    \int_Q \phi_a\bigg(x, \frac{\abs{\bfu(x)-\mathcal R_Q\bfu
        (x)}}{\ell(Q)} \bigg)\,dx&\leq c\, \int_Q \phi_a(x,\abs{\bfD
      \bfu(x)})\,dx + c\, \abs{Q}^{m},
  \end{align*}
  for every cube (or ball) $Q \subset \Rn$ with $\ell(Q) \leq 1$ and
  all $\bfu\in (W^{1,\px}(Q))^n$ with
  \begin{align*}
    a+\dashint_Q \abs{\bfD \bfu}\,dy \leq \max \set{1, \abs{Q}^{-m}}
    = \abs{Q}^{-m}.
  \end{align*}
  Here $\mathcal R_Q$ is a suitable rigid motion, i.e. $\mathcal
  R_Q x=\bfA x+b$ is affine linear with $\bfA^T+\bfA=0$.
\end{theorem}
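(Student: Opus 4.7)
The plan is to mirror the argument used for the shifted Poincar\'e inequality (Theorem~\ref{thm:poincareshift}), with $\nabla\bfu$ replaced throughout by the symmetric gradient $\bfD\bfu$ and $\mean{\bfu}_Q$ by an affine rigid motion $\mathcal{R}_Q\bfu$. The key new ingredient is the classical pointwise Sobolev--Korn representation
\begin{equation*}
  \abs{\bfu(x)-\mathcal{R}_Q\bfu(x)} \leq c \int_Q \frac{\abs{\bfD\bfu(y)}}{\abs{x-y}^{n-1}}\,dy, \qquad x \in Q,
\end{equation*}
valid when $\mathcal{R}_Q\bfu$ is taken to be the $L^2(Q)$-projection of $\bfu$ onto the kernel of $\bfD$; this plays the same role as the corresponding Sobolev estimate in the Poincar\'e case.

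From this representation, dividing by $\ell(Q)$ and decomposing the integral dyadically about $x$ yields
\begin{equation*}
  \frac{\abs{\bfu(x)-\mathcal{R}_Q\bfu(x)}}{\ell(Q)} \leq c \sum_{k\geq 0} 2^{-k} \dashint_{B_k(x)\cap Q} \abs{\bfD\bfu(y)}\,dy,
\end{equation*}
where $B_k(x):=B(x,2^{-k}\ell(Q))$. After applying $\phi_a(x,\cdot)$ to both sides, its convexity and the uniform-in-$a$ $\Delta_2$-constant of the shifted N-function $\phi_a(x,\cdot)$ reduce the left-hand side to a geometric sum of terms of the form $\phi_a\bigl(x,\dashint_{B_k(x)\cap Q}\abs{\bfD\bfu}\,dy\bigr)$. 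To each such term I invoke the shifted key estimate (Theorem~\ref{thm:jensenpxshift}) to move $\phi_a(x,\cdot)$ inside the average, producing on the right-hand side $\dashint_{B_k(x)\cap Q}\phi_a(y,\abs{\bfD\bfu(y)})\,dy$ plus an additive $c\abs{B_k}^{m}$. Summing the geometric series in $k$, integrating over $x\in Q$ and applying Fubini (using that $\abs{B_k(x)\cap Q}$ is comparable to $\abs{B_k}$ up to a dimensional constant when $x\in Q$) then delivers the claim.

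The main obstacle is that Theorem~\ref{thm:jensenpxshift} on the scale $B_k(x)$ requires $a+\dashint_{B_k(x)\cap Q}\abs{\bfD\bfu}\leq\abs{B_k}^{-m}$, a hypothesis that can fail at small scales (large $k$) even though the analogous bound on the whole cube $Q$ is part of our assumptions. As in the proof of Theorem~\ref{thm:poincareshift}, this is handled by splitting the $k$-sum into a \emph{good} range, on which the hypothesis is satisfied and the shifted key estimate applies directly, and a \emph{bad} range, whose contribution is estimated pointwise via a Chebyshev-type bound on the local Hardy--Littlewood maximal function of $\abs{\bfD\bfu}$; the residual is absorbed into the global additive error $c\abs{Q}^m$, and all constants retain the stated dependence on $m$, $c_{\log}(p)$, and $p^+$.
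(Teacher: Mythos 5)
Your proposal follows the paper's proof essentially verbatim: the paper also invokes a pointwise Riesz-potential bound $\abs{\bfu(x)-\mathcal{R}_Q\bfu(x)}\leq c\int_Q\abs{\bfD\bfu(y)}\abs{x-y}^{1-n}\,dy$ (citing Reshetnyak's construction of the rigid motion rather than the $L^2$-projection, though these choices are interchangeable up to constants since the difference of two rigid motions is controlled in $L^\infty(Q)$ by the same Riesz potential), and then simply observes that the proof of the shifted Poincar\'e inequality in \cite{BrDiSc}, Thm.~2.4, goes through with $\nabla u$ replaced by $\bfD\bfu$. The dyadic decomposition, Jensen/$\Delta_2$ step, local applications of Theorem~\ref{thm:jensenpxshift}, and good/bad-scale splitting you describe are precisely the (unstated) content of that cited argument, so you have the same proof with the delegated details spelled out.
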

\begin{proof}
  Due to (2.33)-(2.39) in~\cite{Re} there is a rigid motion $\mathcal
  R_Q\bfu$ such that the difference of $\bfu$ and $\mathcal R_Q\bfu$ can be
  represented as a Riesz-potential of $\bfD\bfu$, i.e. there holds
  \begin{align*}
    |\bfu(x) - \mathcal R_Q\bfu(x)| \leq c \int_Q \frac{|\bfD
      \bfu(y)|}{|x-y|^{n-1}} dy .
  \end{align*}
  Due to this inequality we can prove the claim by the lines of~\cite{BrDiSc}, Thm. 2.4, replacing $\nabla u$ by $\bfD\bfu$.
  \qed
\end{proof}

\section{\texorpdfstring{The $p(\cdot)$-Stokes problem: notation and
    main results}{The p-Stokes problem: notation and main results}}
\label{sec:problem} 
In this section we introduce the main existence results for the
$p(\cdot)$-Stokes and we describe the Finite Element formulation we
will study
%

\subsection{\texorpdfstring{The $p(\cdot)$-Stokes problem} {The p-Stokes
    problem}}
\label{sec:pstokes}
Let us briefly recall some well-known facts about the $p(\cdot)$-Stokes
system~\eqref{eq:linear_pfluid}. Let $\Omega \subset \setR^n$ be a
bounded, polyhedral domain. Then we define the function spaces
\begin{alignat*}{2}
  X &:= \big(W^{1,p(\cdot)}(\Omega)\big)^n, \qquad & V &:=
  \big(W^{1,p(\cdot)}_0(\Omega)\big)^n,
  \\
  Y &:= L^{p'(\cdot)}(\Omega)\,,\qquad & Q &:=
  L^{p'(\cdot)}_0(\Omega):=\biggset{f\in L^{p'(\cdot)}(\Omega)\,:\, \dashint_\Omega
    f\,dx=0 }.
\end{alignat*}
With this notation the weak formulation of
problem~\eqref{eq:linear_pfluid} is the following.
%
\paragraph{\bf Problem \problemQ} For $\bff \in V^* $ find $(\bfv,q) \in V\times Q$ such that
\begin{align*}
      \skp{\bfS(\cdot,\bfD \bfv)}{\bfD \bfxi} - \skp{\divergence \bfxi}{q} &=
    \skp{\bff}{\bfxi} &\forall\, \bfxi \in V,
    \\
    \skp{\divergence \bfv}{\eta} &= 0 &\forall\,
    \eta \in Y.
  \end{align*}

Alternatively, we can reformulate the problem ``hiding'' the pressure:
\paragraph{\bf Problem \problemP}
For $\bff \in L^{p'(\cdot)}(\Omega)$  find $\bfv \in V_{\divergence}$ such that
\begin{align*}
  \skp{\bfS(\cdot,\bfD \bfv)}{\bfD \bfxi} = \skp{\bff}{\bfxi}
  \qquad\forall\, \bfxi \in V_{\divergence},
\end{align*}
where
\begin{align*}
  V_{\divergence} &:= \set{ \bfw \in V \,:\, \skp{\divergence \bfw}{\eta} =
    0 \quad\forall\,\eta \in Y}.
\end{align*}
The names ``Problem~\problemQ'' and ``Problem~\problemP'' are
traditional, see~\cite{BF1991,GR1979}. By using the $\inf$-$\sup$
condition or again the solvability of the divergence equation one easily
checks that the two formulations are equivalent.

The problems~\problemQ and~\problemP have a discrete counterpart,
whose analysis is the ultimate goal of this section. Let $\mathcal{T}$
be a triangulation of our domain~$\Omega$ consisting of
$n$-dimensional simplices. For a simplex $K \in \mathcal{T}$ let $h_K$
denote its diameter and let $\rho_K$ be the supremum of the diameters
of inscribed balls. We assume that $\mathcal T$ is non-degenerate,
i.e., $\max _{K \in \mathcal{T}} \frac {h_k}{\rho_K}\le \gamma_0$.
The global mesh size $h$ is defined by $h := \max _{K \in \mathcal
  T}h_K$.  Let $S_K$ denote the neighborhood of~$K$, i.e., $S_K$ is
the union of all simplices of~$\mathcal{T}$ touching~$K$. One easily
sees that under these assumptions we get that $\abs{K} \sim \abs
{S_K}$ and that the number of simplices in $S_K$ is uniformly bounded
with respect to $K \in \mathcal{T}$.

We denote by $\frP_m(\mathcal{T})$, with $m \in \setN_0$, the space of
scalar or vector-valued continuous functions, which are polynomials of
degree at most $m$ on each simplex $K \in \mathcal{T}$.  Given a
triangulation of $\Omega$ with the above properties and given $k,m \in
\setN_0$ we denote by $X_h \subset (\frP_m(\mathcal{T}))^n$ and
$Y_h\subset \frP_k(\mathcal{T})$ appropriate conforming finite element
spaces defined on $\mathcal{T}$, i.e., $X_h$, $Y_h$ satisfy $X_h
\subset X$ and $Y_h \subset Y$. Moreover, we set $V_h := X_h \cap V$
and $Q_h:= Y_h \cap Q$. For the applications it is convenient to
replace the exponent $p(\cdot)$ by some local approximation.
\begin{align*}
  p_{\mathcal{T}} &:= \sum_{K \in \mathcal{T}}  p(x_K)\chi_K = \sum_{K
    \in \mathcal{T}}  p^-_K \chi_K,
\end{align*}
 where $x_K:=\mathrm{argessinf}_{K} p(x)$, i.e. $p(x_K)= p_K^-$, and consider
\begin{align*}
  \bfS_{\mathcal T}(x,\bfxi)=\sum_{K\in\mathcal T}\chi_K(x)
  \bfS\big(x_K,\bfxi\big),
\end{align*}
instead of $\bfS$.  
Now the discrete counterpart of~\problemP and
\problemQ can be written as follows:
\paragraph{\bf Problem \problemQh} For $\bff \in L^{p'(\cdot)}(\Omega)$
find $(\bfv_h,q_h) \in V_h\times Q_h$ such that
\begin{align}
  \label{eq:weakproblem_h}
  \begin{alignedat}{2}
    \skp{\bfS_{\mathcal T}(\cdot,\bfD \bfv_h)}{\bfD \bfxi_h} - \skp{\divergence \bfxi_h}{q_h} &=
    \skp{\bff}{\bfxi_h} &\qquad&\forall\, \bfxi_h \in V_h,
    \\
    -\skp{\divergence \bfv_h}{\eta_h} &= 0 &\qquad&\forall\,
    \eta_h \in Q_h.
  \end{alignedat}
\end{align}
If $(\bfv_h,q_h) \in V_h\times Q_h$ is  a solution of the ``Problem $(Q_h)$''
then~\eqref{eq:weakproblem_h}$_2$ is satisfied for all $\eta_h \in Y_h$,
since $\divergence \bv_h $ is orthogonal to constants.
\paragraph{\bf Problem \problemPh} For $\bff \in L^{p'(\cdot)}(\Omega)$
 find
$\bfv_h \in V_{h,\divergence}$ such that
\begin{align*}
   \skp{\bfS_{\mathcal T}(\cdot,\bfD \bfv_h)}{\bfD \bfxi_h} &= \skp{\bff}{\bfxi_h}\qquad
    \forall\,\bfxi_h \in V_{h,\divergence},
\end{align*}
where
\begin{align*}
  V_{h,\divergence} :\!&
  = \set{ \bfw_h \in V_h \,:\, -\skp{\divergence \bfw_h}{\eta_h} =
   0 \qquad\forall\,\eta_h \in Y_h}.
\end{align*}
The coercivity of $\bfS_{\mathcal{T}}$ implies that $\bfD\bfv_h \in
(L^{p_{\mathcal{T}}(\cdot)}(\Omega))^{n\times n}$ with
$\|\bfD\bfv_h\|_{p_{\mathcal T}(\cdot)}\leq c(\bff)$. The next lemma
actually shows that this is equivalent to $\bfD\bfv_h\in
(L^{\px}(\Omega))^{n\times n}$.
\begin{lemma}\label{lems:ppT}
  On the space $\frP_k(\mathcal T)$ the norms $\|\cdot\|_{\px}$
  and $\|\cdot\|_{p_\mathcal T(\cdot)}$ are equivalent. (See also
  Remark~4.7 in~\cite{BrDiSc}.)
\end{lemma}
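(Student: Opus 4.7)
I argue simplex by simplex, exploiting the fact that both modulars $\rho_{\px}(f)=\int_\Omega |f|^{p(x)}\,dx$ and $\rho_{p_\mathcal{T}(\cdot)}(f)=\sum_{K\in\mathcal{T}}\int_K |f|^{p_K^-}\,dx$ decompose over $K\in\mathcal{T}$. Two local tools are needed: the log-H\"older continuity of $p$, which gives $0\le p(x)-p_K^-\le \omega(h_K):=c/\log(e+1/h_K)$ for all $x\in K$, and the standard polynomial inverse estimate $\|f\|_{L^\infty(K)}\le c_k|K|^{-1/p_K^-}\|f\|_{L^{p_K^-}(K)}$, valid for $f\in\frP_k(\mathcal{T})$ with a constant that depends only on $k$, $\gamma_0$, $p^-$ and $p^+$.

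\smallskip

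\noindent The bound $\|f\|_{p_\mathcal{T}(\cdot)}\le c\,\|f\|_{\px}$ is straightforward: since $p(x)\ge p_K^-$ on $K$, splitting according to $|f|\le 1$ and $|f|>1$ yields the pointwise inequality $|f|^{p_K^-}\le 1+|f|^{p(x)}$. Integrating over $K$, summing, and applying the usual modular-to-norm conversion (using $p^->1$) gives the desired estimate.

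\smallskip

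\noindent For the reverse inequality $\|f\|_{\px}\le c\,\|f\|_{p_\mathcal{T}(\cdot)}$, I would first normalize so that $\rho_{p_\mathcal{T}(\cdot)}(f)\le 1$, so that $\|f\|_{L^{p_K^-}(K)}\le 1$ on every $K$. The inverse estimate then yields the pointwise bound $|f(x)|\le c_k|K|^{-1/p^-}$ on $K$. Splitting again at $|f|=1$: on $\{|f|\le 1\}$ we trivially have $|f|^{p(x)}\le 1$, while on $\{|f|>1\}$
\begin{align*}
|f|^{p(x)}=|f|^{p_K^-}\,|f|^{p(x)-p_K^-}\le |f|^{p_K^-}\,\bigl(c_k|K|^{-1/p^-}\bigr)^{\omega(h_K)}\le c\,|f|^{p_K^-},
\end{align*}
where the last inequality uses that $\omega(h_K)\log(1/|K|)$ is uniformly bounded (since $|K|\sim h_K^n$ for a non-degenerate simplex). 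This is essentially a direct application of Lemma~\ref{lem:pxpy} with $t=|f(x)|$, $\kappa=0$ and $m=1/p^-$. Integrating and summing gives $\rho_{\px}(f)\le |\Omega|+c\,\rho_{p_\mathcal{T}(\cdot)}(f)$, and the modular-to-norm conversion finishes the proof.

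\smallskip

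\noindent The only substantive point is the cancellation between the inverse-estimate blow-up $|K|^{-1/p^-}$ and the log-H\"older smallness of the exponent gap $p(x)-p_K^-$; this is precisely what Lemma~\ref{lem:pxpy} is designed to provide. Simplices with $h_K$ of order one (only finitely many, since $\Omega$ is bounded) can be absorbed into constants depending on $\Omega$ and $p^\pm$.
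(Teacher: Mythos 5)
Your proof is correct and follows essentially the same route as the paper's: the polynomial inverse estimate yields the $L^\infty(K)$ bound of order $|K|^{-1/p_K^-}$, and Lemma~\ref{lem:pxpy} (log-H\"older continuity versus the mesh-scale blow-up) absorbs the factor $|f|^{p(x)-p_K^-}$. The one genuine (minor) difference is your observation that the direction $\|f\|_{p_\mathcal T(\cdot)}\le c\,\|f\|_{\px}$ is elementary — since $p_\mathcal T(x)\le p(x)$ pointwise one has $|f|^{p_\mathcal T(\cdot)}\le 1+|f|^{\px}$, so this embedding holds on all of $L^{\px}(\Omega)$, not only on $\frP_k(\mathcal T)$ — whereas the paper runs the inverse-estimate argument in both directions even though it is needed only for $\|f\|_{\px}\le c\,\|f\|_{p_\mathcal T(\cdot)}$.
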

\begin{proof}
  Let $g_h\in \frP_k(\mathcal{T})$ with $\|g_h\|_{p_\mathcal
    T(\cdot)}\leq 1$ (which is equivalent to $\int_\Omega
  |g_h|^{p_{\mathcal T}(\cdot)}\,dx\leq 1$ by the very definition of
  the Luxemburg norm).  As $g_h$ is a polynomial of order $k$ on $K$
  we have the local estimate (recall $p(x_K) = p^-_K$)
  \begin{align}
    \label{eq:v'1}
    \begin{aligned}
      \|g_h\|_{L^\infty(K)}&\leq \,c(k)\,
      \dashint_{K}|g_h |\,dx
      \leq \,c(k)\,
      \bigg(\dashint_{K}|g_h |^{p(x_K)}\,dx\bigg)^{
        \frac{1}{p(x_K)}}
      \\
      & \leq c(k) \bigg(\frac{1}{h_k^n}\int_{K}|g_h
      |^{p_{\mathcal T}(\cdot)}\,dx\bigg)^{\frac{1}{p(x_K)}} \leq
      c(k)\, h_K^{-\frac{n}{p(x_K)}}. 
    \end{aligned}
  \end{align}
  Thus we can apply Lemma~\ref{lem:pxpy} with $m=\frac{1}{p(x_K)}$,
  $\kappa=0$ and $t=1+|g_h|$ to find
\begin{align*}
  \int_\Omega |g_h|^{\px} \,dx&=\sum_{K\in\mathcal T}\int_K
  |g_h|^{\px} \,dx \leq \sum_{K\in\mathcal T}\int_K (1+|g_h|)^{\px}
  \,dx
  \\
  & \leq c\sum_{K\in\mathcal T}\int_K
  (1+|g_h|)^{p_{\mathcal{T}}} \,dx =c\int_\Omega
  (1+|g_h|)^{p_{\mathcal{T}}} \,dx\leq c.
\end{align*}
On the other hand, if $\|g_h\|_{\px}\leq 1$ there holds
  \begin{align*}
    \|g_h\|_{L^\infty(K)}&\leq
    c \bigg(\dashint_{K}|g_h |^{p_K^{-}}\,dx\bigg)^{
      \frac{1}{p_K^-}}\leq c\,  \bigg(\dashint_{K}1+|g_h |^{\px}\,dx\bigg)^{
      \frac{1}{p_K^-}} \leq c\, h_K^{-\frac{n}{p_K^-}},
  \end{align*}
  and as before $\int_\Omega |g_h|^{p_\mathcal T(\cdot)} \,dx \leq c$.
  \qed
\end{proof}
So we have $\bfD\bfv_h\in (L^{\px}(\Omega))^{n\times n}$ and Korn's
inequality as in~\cite{DiHaHaRu} (Thm. 14.3.21) yields $\bfv_h\in
(W^{1,\px}(\Omega))^{n}$ uniformly in $h$.

In the following we will measure the approximation error in terms of
 the following adapted version of the quasi-norm
\begin{align}
  \norm{\bfF_{\mathcal T}(\cdot,\bfD\bfv) - \bfF_{\mathcal
      T}(\cdot,\bfD\bfv_h)}_2^2 &=\sum_{K\in\mathcal
    T}\int_K\abs{\bfF(x_K,\bfD\bfv) -
    \bfF(x_K,\bfD\bfv_h)}^2\,dx,
  \\
  \text{where }\bfF_{\mathcal T}(x,\bfxi)&:=\sum_{K\in\mathcal T}\chi_K(x)
  \bfF\big(x_K,\bfxi\big).\label{eq:def:F_Tau}
\end{align}
Recall that $\bfF(x,\bfxi)=(\kappa+|\bfxi|)^{\frac{p(x)-2}{2}}\bfxi$.
%
\subsection{Main results}
\label{sec:main_results}
Throughout the paper we will make the following assumptions on our
finite element spaces for approximate velocity and pressure.
\begin{assumption}\label{ass:proj-div}
  We assume that $(\frP_1(\mathcal{T}))^n \subset X_h$ and there
  exists a linear projection operator $\Pidiv\,:\, X \to X_h$ which
  \begin{enumerate}
  \item preserves divergence in the $Y_h^*$-sense, i.e.,
    \begin{align}
      \label{eq:div_preserving}
      \skp{\divergence \bfw}{\eta_h} &= \skp{\divergence \Pidiv
        \bfw}{\eta_h} \qquad \forall\, \bfw \in X,\; \forall\, \eta_h \in
      Y_h\,;
    \end{align}
  \item preserves zero boundary values, i.e. $\Pidiv(V) \subset V_h$;
  \item is locally $W^{1,1}$-stable in the sense that
    \begin{align}
      \label{eq:Pidivcont}
      \dashint_K \abs{\Pidiv\bfw}\,dx &\leq c \dashint_{S_K}\!
      \abs{\bfw}\,dx + c \dashint_{S_K}\! h_K \abs{\nabla \bfw}\,dx
      \quad \forall\, \bfw \in X,\; \forall\, K \in \mathcal{T}.
    \end{align}
  \end{enumerate}
\end{assumption}
\begin{assumption}
  \label{ass:PiY}
  We assume that $Y_h$ contains the constant functions, i.e. $\setR
  \subset Y_h$, and that there exists a linear projection operator
  $\PiY\,:\, Y \to Y_h$ which is locally $L^1$-stable in the sense
  that
  \begin{align}
    \label{eq:PiYstab}
    \dashint_K \abs{\PiY q}\,dx &\leq c\, \dashint_{S_K} \abs{q}\,dx
    \qquad \forall\, q \in Y,\; \forall\, K \in \mathcal{T}.
  \end{align}
\end{assumption}
\begin{remark}{\rm 
  Note that the Cl{\'e}ment and the Scott--Zhang interpolation operators
  satisfy Assumption~\ref{ass:PiY}.}
\end{remark}
\begin{remark}
  \label{rem:Pidivlinpres}
  {\rm It is possible to weaken the requirements on the projection
    operators $\Pidiv$ and $\PiY$. In fact, we can replace the
    requirement $\Pidiv \bfw_h = \bfw_h$ for all $\bfw_h \in X_h$ by
    the requirement $\Pidiv \frq = \boldsymbol \frq$ for all linear
    polynomials (not in the piecewise sense), and the requirement
    $\PiY q_h = q_h$ for all $q_h \in Y_h$ by the requirement $\PiY c
    = c$ for all constants $c$.  }
\end{remark}

Certainly, the existence of $\Pidiv$ depends on the choice of $X_h$
and $Y_h$. Some concrete two-dimensional examples based on the
Scott-Zhang interpolation and a correction of the divergence are
provided in the Appendix of~\cite{BeBeDiRu}.

Let us now state our main results and shortly explain the strategy of
their proofs.  The first result we prove is that the error for the
velocity in the natural distance
is controlled by some best approximation error for the velocity (with
prescribed divergence) and the pressure (cf.~Lemma~\ref{lem:appr1}).
We work directly with a divergence-preserving operator $\Pidiv$
(cf.~Assumption~\ref{ass:proj-div}), which can be used
(cf.~\cite{BF1991}) to derive the inf-sup condition. From the local
$W^{1,1}$-stability of $\Pidiv$, we derive its non-linear, local
counterparts in terms of the natural distance (cf.
Theorem~\ref{thm:app_V}). Thus we can replace the best approximation
error for the velocity (with prescribed divergence) by local averages
of the solution $\bv$ in terms of the natural distance
(cf.~Theorem~\ref{thm:appr2})

Once we have in hand these best approximation estimates we obtain
convergence rates in terms of the mesh size. More precisely we will
prove the following result (see Corollary~\ref{cor:vel} and
Theorem~\ref{thm:appr4}):
\begin{theorem}
  \label{thm:appr_h1}
  Let $\Pidiv$ satisfy Assumption~\ref{ass:proj-div} and $\PiY$
  satisfy Assumption~\ref{ass:PiY}. Let $(\bfv,q)$ and $(\bfv_h,q_h)$
  be solutions of the problems \problemQ{} and \problemQh{},
  respectively. Suppose $p\in C^{0,\alpha}(\overline{\Omega})$ with
  $\alpha\in(0,1]$ and $p^->1$. Furthermore, let $\bfF(\cdot,\bfD
  \bfv) \in (W^{1,2}(\Omega))^{n\times n}$ and also let $q \in
  W^{1,p'(\cdot)}(\Omega)$. Then
  \begin{align}
    \label{eq:appr_h1a}
    \norm{\bfFT(\cdot,\bfD \bfv)-\bfFT(\cdot,\bfD \bfv_h)}_2&\leq c\, \big(h^{\min
      \set{1,\frac {(p^+)'}2}}+h^\alpha\big),
    \\
    \label{eq:appr_h1b}
    \|q-q_h\|_{p'(\cdot)} &\leq
    c\,\Big(h^{\frac{\min\set{((p^+)')^2,4}}{2(p^{-})'}}+h^{\alpha}\Big).
  \end{align}
  Here $c$ depends on $p^-$, $p^+$, $[p]_\alpha$, and $\gamma_0$.
\end{theorem}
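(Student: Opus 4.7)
The plan is to assemble the result from three ingredients, as indicated by the authors: an abstract best-approximation estimate for the velocity (Lemma~\ref{lem:appr1}), the non-linear local stability of the divergence-preserving projection $\Pidiv$ (Theorem~\ref{thm:app_V}), and concrete interpolation estimates based on the assumed regularity of $\bfv$ and $q$ (Theorem~\ref{thm:appr2}). The $h^\alpha$ contribution present in both bounds will arise whenever $\bfS(\cdot,\bfD\bfv)$ is replaced by its piecewise-frozen counterpart $\bfST(\cdot,\bfD\bfv)$, via H\"older continuity of $\px$ combined with Lemma~\ref{lem:pxpy}.

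For the velocity bound~\eqref{eq:appr_h1a} I would subtract the equations of~\problemQ{} and~\problemQh{} tested against $\bfv_h-\Pidiv\bfv\in V_h$; the pressure contribution collapses to a pairing with $q-\PiY q$ thanks to the divergence-preserving property~\eqref{eq:div_preserving}. The monotonicity identity yielding $\skp{\bfS(x,\bfD\bfv)-\bfS(x,\bfD\bfv_h)}{\bfD(\bfv-\bfv_h)}\gtrsim \|\bfF(\cdot,\bfD\bfv)-\bfF(\cdot,\bfD\bfv_h)\|_2^2$, combined with shifted Young's inequality (available via Theorem~\ref{thm:jensenpxshift}), reduces the error to three quantities: a velocity best-approximation $\|\bfFT(\cdot,\bfD\bfv)-\bfFT(\cdot,\bfD\Pidiv\bfv)\|_2^2$, a pressure best-approximation involving $q-\PiY q$, and a consistency term from $\bfS-\bfST$. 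Theorem~\ref{thm:app_V} converts the first two into local oscillations of $\bfF(\cdot,\bfD\bfv)$ and $q$; under $\bfF(\cdot,\bfD\bfv)\in W^{1,2}$ and $q\in W^{1,p'(\cdot)}$, Theorem~\ref{thm:appr2} yields the rate $h^{\min\{1,(p^+)'/2\}}$ after the standard shift-change cost when $p^+<2$; the consistency term contributes the remaining $h^\alpha$.

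For the pressure bound~\eqref{eq:appr_h1b} I would rely on the discrete $\inf$-$\sup$ condition (derived from Assumption~\ref{ass:proj-div} in the variable-exponent setting) to estimate $\|q_h-\PiY q\|_{p_{\mathcal T}'(\cdot)}$ by a supremum of pairings $\skp{\divergence\bfxi_h}{q_h-\PiY q}/\|\bfD\bfxi_h\|_{p_{\mathcal T}(\cdot)}$ over $\bfxi_h\in V_h$. Using the discrete equation and~\eqref{eq:div_preserving}, this pairing rewrites as $\skp{\bfST(\cdot,\bfD\bfv_h)-\bfS(\cdot,\bfD\bfv)}{\bfD\bfxi_h} + \skp{\divergence\bfxi_h}{q-\PiY q}$. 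Splitting the stress difference as $(\bfST(\cdot,\bfD\bfv_h)-\bfST(\cdot,\bfD\bfv))+(\bfST-\bfS)(\cdot,\bfD\bfv)$ and applying the dual Orlicz estimate $|\bfS(x,\bfxi)-\bfS(x,\bfeta)|\lesssim |\bfF(x,\bfxi)-\bfF(x,\bfeta)|(\kappa+|\bfxi|+|\bfeta|)^{(p(x)-2)/2}$, the first piece is controlled by the velocity rate~\eqref{eq:appr_h1a} (this is where the awkward exponent will appear), and the second contributes $h^\alpha$. Lemma~\ref{lems:ppT} finally replaces $\|\cdot\|_{p_{\mathcal T}'(\cdot)}$ by $\|\cdot\|_{p'(\cdot)}$.

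The main obstacle will be to express, as a single power of $h$, the cost of passing from the natural $\bfF$-distance (which is $L^2$-based with a \emph{local} shift $|\bfD\bfv|$) to the variable-exponent $L^{p'(\cdot)}$-norm required for the pressure. This conversion forces Young's inequality with a \emph{global} rather than local shift, and the asymmetric behaviour in $p^-$ and $p^+$ is precisely what produces the exponent $\min\{((p^+)')^2,4\}/(2(p^-)')$ rather than the more optimistic $(p^+)'/2$ one might expect by analogy with~\eqref{eq:appr_h1a}. A secondary, purely technical, difficulty is the uniform control of $\bfS-\bfST$ on each simplex: one must simultaneously use $|p(x)-p(x_K)|\lesssim h^\alpha$ and Lemma~\ref{lem:pxpy} to absorb the logarithmic loss coming from differentiating $(\kappa+|\bfxi|)^{p-2}\bfxi$ in the exponent.
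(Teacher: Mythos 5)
Your proposal matches the paper's route: the velocity bound is obtained exactly as you describe by combining Lemma~\ref{lem:appr1} (error equation tested against $\bfv_h-\Pidiv\bfv$, divergence-preserving property, monotonicity via Lemma~\ref{lem:hammer}), Theorem~\ref{thm:app_V}, and Theorem~\ref{thm:appr2}, with the $h^\alpha$ term coming from the $\bfS\to\bfST$ consistency error, and the pressure bound follows from the discrete inf-sup condition (Lemma~\ref{lem:ismd} via Lemma~\ref{lem:errpress}) together with the conversion from the $\bfST$-error to the $\bfFT$-error (Lemma~\ref{lem:errSvsF}), which is where the unfavourable exponent arises. The one small imprecision is that the Young-type step is via Lemma~\ref{lem:young} and Lemmas~\ref{lem:shiftedindex}--\ref{lem:shifted2} rather than Theorem~\ref{thm:jensenpxshift}, and the paper's treatment of the two regimes $p_K^-\gtrless 2$ (the $\mathcal{T}^\pm$ splits in Corollary~\ref{cor:vel} and Lemma~\ref{lem:errSvsF}, plus the $\gamma$-optimization) is exactly the ``global vs.\ local shift'' cost you anticipate, so the picture is essentially right.
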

\begin{remark}
  It is standard to show $\bfF(\cdot,\bfD \bfv) \in W^{1,2}$ in the
  interior of $\Omega$. The proof follows for instance along the lines
  of~\cite{BFZ,Br} where even more general constitutive relations
  than~\eqref{eq:1.3} were considered.  Note that $\bfF(\cdot,\bfD
  \bfv) \in W^{1,2}$ implies $\bfF(\cdot,\bfD \bfv) \in
  L^{\frac{2n}{n-2}}$ (by Sobolev's Theorem) and $\bfD\bfv\in
  L^{\frac{n}{n-2}p(\cdot)}$. For the space periodic case
  see~\cite{Die2002}. See also~\cite{CG2009} for the problem
   with small data.

  The regularity up to the boundary still seems an open challenging
  problem. The difficulties, even for constant exponents, are due to
  the combination of zero Dirichlet data with the symmetric gradients
  and the pressure. The closest result to $\bfF(\bfD \bfv) \in
  W^{1,2}(\Omega)$ is in~\cite{BeiKapRuz11}, where it is shown for a
  constant $p$ that $\bfF(\bfD \bfv) \in W^{1,s_1}(\Omega)$ and $q \in
  W^{1,s_2}$ for certain $s_1 < 2$ and $s_2 < p'$; see also the
  references therein for other results in this direction.

  In the absence of a pressure and for constants exponents the
  regularity $\bF(\bfD \bfv) \in W^{1,2}(\Omega)$ is shown
  in~\cite{SerShi97}.
\end{remark}
\section{Best Approximation Error for the  Velocity}
\label{sec:velocity}
In this section we prove error estimates for the velocity in terms of
best approximation properties measured in the natural distance.
\subsection{Equation for the  error}
\label{sec:equation-error}
Taking the difference between \problemQ{} and \problemQh{} we get the
following equation for the numerical error
\begin{alignat}{2}
  \label{eq:err_Q}
  \begin{aligned}
    \skp{\bfS(\cdot,\bfD \bfv) - \bfS_{\mathcal T}(\cdot,\bfD \bfv_h)}{\bfD \bfxi_h} -
    \skp{\divergence \bfxi_h}{q-q_h} &= 0 &&\qquad \forall\, \bfxi_h
    \in V_h.
  \end{aligned}
\end{alignat}
We start with a preliminary approximation result which will be improved
later on in Theorem~\ref{thm:appr2}.
\begin{lemma} 
  \label{lem:appr1}
  Let $(\bfv,q)$ and $(\bfv_h,q_h)$ be the solutions of the problems
  \problemQ{} and \problemQh{}, respectively. Suppose $p\in
  C^{0,\alpha}(\overline{\Omega})$ with $\alpha\in(0,1]$ and
  $p^->1$. Then for some $s>1$ (close to 1 for $h$ small) we have the
  following estimate
  \begin{align}
    \label{eq:bestv}
    \begin{aligned}
      \norm{\bfF_{\mathcal T}(\cdot,\bfD \bfv)-\bfF_{\mathcal
          T}(\cdot,\bfD \bfv_h)}_2^2&\leq c\, \inf_{\bfw_h \in
        V_{h,\divergence}} \norm{\bfF_{\mathcal T}(\cdot,\bfD
        \bfv)-\bfF_{\mathcal T}(\cdot,\bfD \bfw_h)}_2^2
      \\
      &\qquad + c\, \inf_{\mu_h \in Y_h} \sum_{K\in\mathcal T}\int_K
      (\phi^K_{\abs{\bfD \bfv}})^\ast (\cdot,\abs{q -
        \mu_h})\,dx
      \\
      &\qquad+c\,h^{2\alpha}\bigg(\int_\Omega(1+|\bfD\bfv|^{p(x)
        s})\,dx\bigg).
    \end{aligned}
  \end{align}
  Here $c$ depends on $p^-$, $p^+$, $[p]_\alpha$, and $\gamma_0$, while
  $(\phi^K_{\abs{\bfD \bfv}})^\ast$ is defined in~\eqref{eq:phiK}.
\end{lemma}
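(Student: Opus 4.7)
The plan is to run a Céa-style best-approximation argument adapted to the $\px$-setting, with an extra consistency term arising from replacing the exact $\bfS$ by its piecewise approximation $\bfST$. First I would rewrite \eqref{eq:err_Q} by adding and subtracting $\bfST(\cdot,\bfD\bfv)$, so that it reads
\begin{align*}
  \skp{\bfST(\cdot,\bfD\bfv)-\bfST(\cdot,\bfD\bfv_h)}{\bfD\bfxi_h}
  -\skp{\divergence\bfxi_h}{q-q_h}
  =\skp{\bfST(\cdot,\bfD\bfv)-\bfS(\cdot,\bfD\bfv)}{\bfD\bfxi_h}
\end{align*}
for every $\bfxi_h\in V_h$. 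For a fixed $\bfw_h\in V_{h,\divergence}$ and $\mu_h\in Y_h$ I would then test with $\bfxi_h:=\bfw_h-\bfv_h\in V_{h,\divergence}$; because $\bfxi_h$ is discretely divergence-free in the $Y_h^*$-sense, the pressure term reduces to $\skp{\divergence\bfxi_h}{q-\mu_h}$. Splitting $\bfD\bfxi_h=(\bfD\bfw_h-\bfD\bfv)+(\bfD\bfv-\bfD\bfv_h)$ in the monotone term on the left isolates, on each simplex $K$, the coercive quantity $(\bfS(x_K,\bfD\bfv)-\bfS(x_K,\bfD\bfv_h))\cdot(\bfD\bfv-\bfD\bfv_h)$, which by the standard $p$-Stokes monotonicity is pointwise equivalent to $|\bfF(x_K,\bfD\bfv)-\bfF(x_K,\bfD\bfv_h)|^{2}$, i.e.\ to the local contribution of $\norm{\bfFT(\cdot,\bfD\bfv)-\bfFT(\cdot,\bfD\bfv_h)}_2^2$.

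Next I would bound the three right-hand-side contributions by Orlicz-type Young inequalities with the shifted N-function $\phi^K_{\abs{\bfD\bfv}}$ and its conjugate. The term involving $\bfw_h-\bfv$ is controlled by $\norm{\bfFT(\cdot,\bfD\bfv)-\bfFT(\cdot,\bfD\bfw_h)}_2^2$, via the pointwise equivalence $\phi^K_{\abs{\bfa}}(\abs{\bfa-\bfb})\sim|\bfF(x_K,\bfa)-\bfF(x_K,\bfb)|^2$. The pressure term is split as
\begin{align*}
  \abs{\skp{\divergence\bfxi_h}{q-\mu_h}_K}
  \le \varepsilon\int_K\phi^K_{\abs{\bfD\bfv}}(\abs{\bfD\bfxi_h})\,dx
  +c_\varepsilon\int_K(\phi^K_{\abs{\bfD\bfv}})^\ast(\abs{q-\mu_h})\,dx,
\end{align*}
where splitting $\bfD\bfxi_h$ again as above reduces the first summand to contributions of the two $\bfF_{\mathcal T}$-differences, the second of which is absorbed into the left-hand side for $\varepsilon$ small, and the first feeds the first infimum on the right of \eqref{eq:bestv}. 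Taking the infima over $\bfw_h$ and $\mu_h$ yields the first two terms of \eqref{eq:bestv}.

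The main obstacle is the consistency term $\skp{\bfST(\cdot,\bfD\bfv)-\bfS(\cdot,\bfD\bfv)}{\bfD\bfxi_h}=\sum_K\int_K(\bfS(x_K,\bfD\bfv)-\bfS(x,\bfD\bfv))\cdot\bfD\bfxi_h\,dx$, which must produce the $h^{2\alpha}$-term. Using the explicit form \eqref{eq:1.3} and the Hölder continuity $\abs{p(x)-p(x_K)}\le[p]_\alpha h_K^\alpha$, a direct computation gives
\begin{align*}
  \abs{\bfS(x,\bfD\bfv)-\bfS(x_K,\bfD\bfv)}
  \le c\,h^{\alpha}(\kappa+\abs{\bfD\bfv})^{p(x_K)-1}\,
  \bigabs{\log(\kappa+\abs{\bfD\bfv})}\cdot\mathbf{1}_{\kappa+\abs{\bfD\bfv}\ge 1}+(\text{lower-order}),
\end{align*}
after writing $a^{p(x)-p(x_K)}-1=\exp((p(x)-p(x_K))\log a)-1$ and Taylor-expanding. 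The logarithm is then absorbed into an arbitrarily small extra power by the elementary bound $\log(1+t)\le c_\sigma t^\sigma$ for any $\sigma>0$, chosen with $\sigma=s-1$ so that on regions where $\kappa+\abs{\bfD\bfv}\le h^{-m}$ Lemma~\ref{lem:pxpy} keeps $(\kappa+\abs{\bfD\bfv})^{(s-1)p(x_K)}$ comparable to a constant of the natural $\px s$-integrand. Estimating the $\bfD\bfxi_h$ factor via Young's inequality in the $\phi^K_{\abs{\bfD\bfv}}$-duality and splitting $\bfxi_h=(\bfw_h-\bfv)-(\bfv_h-\bfv)$ as before, the $\bfxi_h$-contributions are absorbed into the left-hand side and the best-approximation term, leaving exactly $c\,h^{2\alpha}\int_\Omega(1+\abs{\bfD\bfv}^{p(x)s})\,dx$. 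The delicate point throughout is keeping $s>1$ close enough to $1$ (depending on $h$ via Lemma~\ref{lem:pxpy}) so that the log-term is tamed without destroying integrability; everything else is a standard Orlicz computation.
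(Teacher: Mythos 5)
Your proposal follows essentially the same route as the paper's proof: the same rewriting of the error equation by adding and subtracting $\bfST(\cdot,\bfD\bfv)$, the same choice of discretely divergence-free test function $\bfxi_h=\bfw_h-\bfv_h$ reducing the pressure term to $\skp{\divergence\bfxi_h}{q-\mu_h}$, the same absorption of cross terms via Young's inequality with the shifted N-functions $\phi^K_{\abs{\bfD\bfv}}$ and Lemma~\ref{lem:hammer}, and the same treatment of the $h^{2\alpha}$-consistency term via H\"older continuity of $p$ plus $\log(\kappa+t)\le c_\sigma t^\sigma$ with the slack $s>1$ absorbing the logarithm. The only cosmetic difference is that the paper pulls the log-factor through $(\phi^K_{\abs{\bfD\bfv}})^\ast$ by explicitly invoking Lemmas~\ref{lem:shiftedindex} and~\ref{lem:shifted2}, a step you describe more informally but correctly.
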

\begin{proof}
  For $\bfw_h \in V_{h,\divergence}$ we have $\bfv_h-\bfw_h \in
  V_{h,\divergence}$. Consequently for all $\mu_h \in Y_h$, we obtain
  with Lemma~\ref{lem:hammer} and Eq.~\eqref{eq:err_Q} that
\begin{align*}
  \norm{\bfF_{\mathcal T}(\cdot,\bfD\bfv) - \bfF_{\mathcal
      T}(\cdot,\bfD\bfv_h)}_2^2 &\leq c\,
  \int_\Omega\big(\bfS_{\mathcal T}(\cdot,\bfD \bfv)-\bfST(\cdot,\bfD
  \bfv_h)\big): \big(\bfD\bfv-\bfD\bfv_h\big)\,dx
  \\
  &= c\, \int_\Omega\big(\bfS(\cdot,\bfD\bfv)-\bfST(\cdot,
  \bfD\bfv_h)\big): \big(\bfD\bfv-\bfD\bfw_h\big)\,dx
  \\
  &\quad +c\,
  \int_\Omega\big(\bfST(\cdot,\bfD\bfv)-\bfS(\cdot,\bfD\bfv)\big):
  \big(\bfD\bfv-\bfD\bfv_h\big)\,dx
  \\
  &\quad -c\, \int_\Omega
  \divergence\big(\bfw_h-\bfv_h\big)\,(q-\mu_h)\,dx
  \\
  &= c\, \int_\Omega\big(\bfST(\cdot,\bfD\bfv)-\bfST(\cdot,
  \bfD\bfv_h)\big): \big(\bfD\bfv-\bfD\bfw_h\big)\,dx
  \\
  &\quad +c\,
  \int_\Omega\big(\bfST(\cdot,\bfD\bfv)-\bfS(\cdot,\bfD\bfv)\big):
  \big(\bfD\bfv-\bfD\bfv_h\big)\,dx
  \\
  &\quad +c\,
  \int_\Omega\big(\bfS(\cdot,\bfD\bfv)-\bfST(\cdot,\bfD\bfv)\big):
  \big(\bfD\bfv-\bfD\bfw_h\big)\,dx
  \\
  &\quad -c\, \int_\Omega
  \divergence\big(\bfw_h-\bfv_h\big)\,(q-\mu_h)\,dx.
\end{align*}
As before this implies
\begin{align*}
  \norm{\bfF_{\mathcal T}(\cdot,\bfD\bfv) - \bfF_{\mathcal
      T}(\cdot,\bfD\bfv_h)}_2^2 &\leq c\,\norm{\bfF_{\mathcal
      T}(\cdot,\bfD\bfv) - \bfF_{\mathcal T}(\cdot,\bfD\bfw_h)}_2^2
  \\
  &\quad +c\,
  \int_\Omega\big(\bfST(\cdot,\bfD\bfv)-\bfS(\cdot,\bfD\bfv)\big):
  \big(\bfD\bfv-\bfD\bfv_h\big)\,dx
  \\
  &\quad +c\,
  \int_\Omega\big(\bfS(\cdot,\bfD\bfv)-\bfST(\cdot,\bfD\bfv)\big):
  \big(\bfD\bfv-\bfD\bfw_h\big)\,dx
  \\
  &\quad -c\, \int_\Omega
  \divergence\big(\bfw_h-\bfv_h\big)\,(q-\mu_h)\,dx
  \\
  &=: (I) + (II) + (III)+(IV).
\end{align*}
We begin with the estimate for~$(II)$.

To estimate the difference between $\bfST$ and $\bfS$ we need the
estimate
\begin{align*}
  \lefteqn{\bigabs{\bfS_{\mathcal T}(x,\bfQ)-\bfS(x,\bfQ)}} \quad &
  \\
  &\leq c\, \abs{p_{\mathcal{T}}(x) - p(x)}\, \abs{\ln(\kappa+
    \abs{\bfQ})} \Big( (\kappa + \abs{\bfQ})^{p_{\mathcal{T}}(x)-2} +
  (\kappa + \abs{\bfQ})^{p(x)-2}\Big) \abs{\bfQ}
  \\
  &\leq c\, h^\alpha \abs{\ln(\kappa+
    \abs{\bfQ})} \Big( (\kappa + \abs{\bfQ})^{p_{\mathcal{T}}(x)-2} +
  (\kappa + \abs{\bfQ})^{p(x)-2}\Big) \abs{\bfQ},
\end{align*}
for all $\bfQ \in \mathbb R^{n\times n}_{sym}$ using also that $p \in
C^{0,\alpha}$. Hence, we get
\begin{align*}
  (II)&:=\int_\Omega\big(\bfS_{\mathcal
    T}(\cdot,\bfD\bfv)-\bfS(\cdot,\bfD\bfv)\big):
  \big(\bfD\bfv-\bfD\bfv_h\big)\,dx
  \\
  &\leq c\, h^\alpha\!\!\int_\Omega \abs{\ln(\kappa\!+\!
    \abs{\bfD\bfv})} (\kappa \!+\!
  \abs{\bfD\bfv})^{p_{\mathcal{T}}(x)-2} \abs{\bfD \bfv}
  \abs{\bfD\bfv\!-\!\bfD\bfv_h}\,dx
  \\
  &\quad + c\, h^\alpha\!\!\int_\Omega \abs{\ln(\kappa\!+\!
    \abs{\bfD\bfv})} (\kappa \!+\!  \abs{\bfD\bfv})^{p(x)-2} \abs{\bfD
    \bfv} \abs{\bfD\bfv\!-\!\bfD\bfv_h}\,dx
  \\
  &=: (II)_1+(II)_2.
\end{align*}
We begin with the estimate for~$(II)_1$ on each~$K \in
\mathcal{T}$. Define the N-function
\begin{align}
\label{eq:phiK}
  \phi^K(t) := \int_0^t (\kappa + s)^{p_K-2}s \,ds.
\end{align}
Using this definition we estimate
\begin{align*}
  (II)_1 &\leq c\, \sum_{K \in \mathcal{T}} \int_K h^\alpha
  \abs{\ln(\kappa\!+\!  \abs{\bfD\bfv})} (\phi^K)'( \abs{\bfD
    \bfv}) \abs{\bfD\bfv\!-\!\bfD\bfv_h}\,dx.
\end{align*}
Using Young's inequality with $\phi^K_{\abs{\bfD \bfv}}:=(\phi^K)_{\abs{\bfD \bfv}}$ on
$\abs{\bfD\bfv\!-\!\bfD\bfv_h}$ and its complementary
function on the rest, we get
\begin{align*}
  (II)_1 &\leq \delta \sum_{K \in \mathcal{T}} \int_K
  (\phi^K)_{\abs{\bfD \bfv}}(\abs{\bfD\bfv\!-\!\bfD\bfv_h})\,dx
  \\
  &+ c_\delta\, \sum_{K \in \mathcal{T}} \int_K \big(
  (\phi^K)_{\abs{\bfD \bfv}} \big)^* \Big( h^\alpha
  \abs{\ln(\kappa\!+\!  \abs{\bfD\bfv})} (\phi^K)'( \abs{\bfD \bfv})
  \Big) \,dx.
\end{align*}
Now we use Lemma~\ref{lem:hammer} for the first line and
Lemma~\ref{lem:shiftedindex} and Lemma~\ref{lem:shifted2}
(with $\lambda=h^\alpha \leq 1$ using $h \leq 1$) for the second line
to find
\begin{align*}
  (II)_1 &\leq \delta c \norm{\bfF_{\mathcal T}(\cdot,\bfD\bfv) -
    \bfF_{\mathcal T}(\cdot,\bfD\bfv_h)}_2^2
  \\
  &+ c_\delta\, \sum_{K \in \mathcal{T}} \int_K
  (1+\abs{\ln(\kappa\!+\!  \abs{\bfD\bfv})})^{\max \set{2,p_K'}} \big(
  (\phi^K)_{\abs{\bfD \bfv}} \big)^* \Big( h^\alpha (\phi^K)'(
  \abs{\bfD \bfv}) \Big) \,dx
  \\
  &\leq \delta c \norm{\bfF_{\mathcal T}(\cdot,\bfD\bfv) -
    \bfF_{\mathcal T}(\cdot,\bfD\bfv_h)}_2^2
  \\
  &+ c_\delta\, \sum_{K \in \mathcal{T}} h^{2\alpha} \int_K
  (1+\abs{\ln(\kappa\!+\!  \abs{\bfD\bfv})})^{\max \set{2,p_K'}}
  (\phi^K)( \abs{\bfD \bfv}) \,dx.
\end{align*}
The term $(II)_2$ is estimate similarly. We get
\begin{align*}
  (II)_2 &\leq \delta c \norm{\bfF_{\mathcal T}(\cdot,\bfD\bfv) -
    \bfF_{\mathcal T}(\cdot,\bfD\bfv_h)}_2^2
  \\
  &\mspace{-30mu}+ c_\delta \sum_{K \in \mathcal{T}} h^{2\alpha}
  \!\!\int_K \big(1\!+\!\abs{\ln(\kappa\!+\!  \abs{\bfD\bfv})} (\kappa
  \!+\!  \abs{\bfD \bfv})^{p(x)-p_\mathcal{T}(x)}\big)^{\max
    \set{2,p_K'}} (\phi^K)( \abs{\bfD \bfv}) \,dx.
\end{align*}
Overall, this yields
\begin{align*}
  (II) &\leq \delta c\,\norm{\bfF_{\mathcal T}(\cdot,\bfD\bfv) -
    \bfF_{\mathcal T}(\cdot,\bfD\bfv_h)}_2^2 + c_\delta c_s
  h^{2\alpha} \int_\Omega \big(1 + \abs{\bfD \bfv}^{p(x)\,s}\big)
  \,dx,
\end{align*}
Here we used $\ln(\kappa+t)\leq c(\kappa)t^\kappa$ for all $t\geq1$ and $\kappa>0$.
For $h$ small we can choose $s$
close to~$1$. 
For $(III)$ the analogous estimate is
\begin{align*}
  (III) &\leq  c\,\norm{\bfF_{\mathcal T}(\cdot,\bfD\bfv) -
    \bfF_{\mathcal T}(\cdot,\bfD\bfw_h)}_2^2 +  c_s
 h^{2\alpha} \int_\Omega \big(1 + \abs{\bfD
    \bfv}^{p(x)\,s}\big) \,dx.
\end{align*}
  Next, we estimate the term $(IV)$ involving $q-\mu_h$. 
  We add and subtract $\bD\bv$, use Young's
  inequality~\eqref{lem:young} for $\phi^K_{\abs{\bfD \bfv}}$, and
  apply Lemma~\ref{lem:hammer} to obtain
 \begin{align*}
   \lefteqn{\big|\skp{\divergence(\bfv_h - \bfw_h)}{q - \mu_h}\big|}
   \hspace{1cm} &
   \\
   &\leq \int_\Omega \big( \abs{\bfD\bfv_h - \bfD\bv} + \abs{\bfD \bv-
     \bfD\bfw_h} \big)\, \abs{q - \mu_h}\,dx
   \\
   &=\sum_{K\in\mathcal T} \int_K \big( \abs{\bfD\bfv_h - \bfD\bv} +
   \abs{\bfD \bv- \bfD\bfw_h} \big)\, \abs{q - \mu_h}\,dx
   \\
   &\leq \epsilon\sum_{K\in\mathcal T}\, \int_\Omega \phi^K_{\abs{\bfD
       \bfv}}(\cdot,\abs{\bfD \bfv_h - \bfD \bfv}) + \phi^K_{\abs{\bfD
       \bfv}}(\cdot,\abs{\bfD \bfw_h - \bfD \bfv})\,dx
   \\
   &\hspace{1cm} + c_\epsilon\sum_{K\in\mathcal T}
   \int_K(\phi^K_{\abs{\bfD \bfv}})^\ast (\cdot,\abs{q - \mu_h})\,dx
   \\
   &\leq \epsilon\,c\,\Big(\,\norm{\bfF_{\mathcal T}(\cdot,\bfD \bfv)
     - \bfF_{\mathcal T}( \cdot,\bfD \bfv_h)}^2_2
   +\norm{\bfF_{\mathcal T}(\cdot,\bfD \bv) - \bfF_{\mathcal
       T}(\cdot,\bfD \bfw_h)}^2_2 \Big)
   \\
   &\hspace{1cm} +c_\epsilon\sum_{K\in\mathcal T}\int_K
   (\phi^K_{\abs{\bfD \bfv}})^\ast (\cdot,\abs{q - \mu_h})\,dx.
  \end{align*}
  Collecting the estimates and choosing $\epsilon>0$ small enough we
  obtain the assertion by noticing that $\bw_h\in V_{h,\divergence}$ and
  $\mu _h\in Y_h$ are arbitrary.
\qed
\end{proof}
\subsection{The divergence-preserving
  interpolation operator}\label{sec:div}
In this section we derive the non-linear estimates for $\PiY$ and the
divergence preserving operator $\Pidiv$. 
\begin{theorem}[{Orlicz-Continuity/Orlicz-Approximability,
  \cite[Section~3]{BrDiSc}}]
  Let $\varphi_a(x,t):=\int_0^t(\kappa+a+s)^{p(x)-2}s\,ds$. Suppose
  $p\in\PPln (\Omega)$ with $p^+<\infty$.
  \begin{itemize}
    \label{lem:PiYstab} \item[a)] Let $\PiY$ satisfy
    Assumption~\ref{ass:PiY}. Then for all $K \in \mathcal{T}$ and $q
    \in L^{\px}(\Omega)$ with
    \begin{align*}
      a+\dashint_Q \abs{q}\,dy \leq \max \set{1, \abs{Q}^{-m}} =
      \abs{Q}^{-m},
    \end{align*}
    we have for every $m\in\mathbb N$ there exists $c_m$ such that

    \begin{align*}
      \int_K \varphi_a \big(\cdot, \abs{\PiY q} \big)\,dx &\leq c_m\,
      \int_{S_K} \varphi_a \big( \abs{q} \big)\,dx+c_m h_K^m.
    \end{align*}
    Moreover, for all $K \in \mathcal{T}$ and $q \in
    W^{1,\px}(\Omega)$ with
    \begin{align*}
      a+\dashint_Q \abs{\nabla q}\,dy \leq \max \set{1, \abs{Q}^{-m}}
      = \abs{Q}^{-m},
    \end{align*}
    we have
    \begin{align*}
      \int_K \varphi_a \big(\cdot, \abs{q- \PiY q} \big)\,dx &\leq
      c_m\, \int_{S_K} \varphi_a \big( h_K \abs{\nabla q}
      \big)\,dx+c_m h_K^m.
    \end{align*}
    \label{thm:ocont}\item[b)]
    Let $\Pidiv$ satisfy Assumption~\ref{ass:proj-div}.  Then $\Pidiv$
    has the local continuity property
    \begin{align*}
      \int_K \varphi_a \big( \cdot,\abs{\nabla \Pidiv \bfw} \big)\,dx
      \leq c\, \int_{S_K} \varphi_a \big(\cdot, \abs{\nabla \bfw}
      \big)\,dx+c_m h_k^m,
    \end{align*}
    for all $K \in \mathcal{T}$ and $\bfw \in
    (W^{1,p(\cdot)}(\Omega))^N$ with
    \begin{align*}
      a+\dashint_Q \abs{\nabla \bfw}\,dy \leq \max \set{1, \abs{Q}^{-m}}
      = \abs{Q}^{-m}.
    \end{align*}
  \end{itemize}
  The constant $c_m$ depends only on $n$, $c_{\log}(p)$, $p^+$, and
  the non-degeneracy constant $\gamma_0$ of the triangulation
  $\mathcal{T}$.
\end{theorem}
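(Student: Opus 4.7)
The plan is to upgrade the linear $L^1$/$W^{1,1}$-stability hypotheses (Assumption~\ref{ass:proj-div}-\ref{ass:PiY}) to the nonlinear Orlicz setting by combining three ingredients: (i) inverse estimates on the finite element space (exploiting that $\PiY q$ and $\Pidiv\bfw$ are piecewise polynomials), (ii) the shifted key estimate of Theorem~\ref{thm:jensenpxshift} to absorb the variable exponent at the price of the $h_K^m$ remainder, and (iii) the shifted Poincaré inequality of Theorem~\ref{thm:poincareshift} whenever an approximation error needs to be extracted.

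For the stability part of (a), I would begin from $\dashint_K \abs{\PiY q}\,dx \leq c\dashint_{S_K}\abs{q}\,dx$. Since $\PiY q\in \frP_k(\mathcal{T})$ is polynomial on $K$, the standard polynomial inverse estimate gives $\norm{\PiY q}_{L^\infty(K)} \leq c\dashint_K |\PiY q|\,dx \leq c\,M_{S_K} q$. The monotonicity of $\varphi_a(x,\cdot)$ then yields $\int_K \varphi_a(x,|\PiY q|)\,dx \leq |K|\,\varphi_a(x_K, c\,M_{S_K} q)$ pointwise; applying Theorem~\ref{thm:jensenpxshift} to $S_K$ (with $c\,M_{S_K} q$ in place of $M_Q f$, which is admissible thanks to the hypothesis $a+\dashint_{S_K}|q|\leq |S_K|^{-m}$) converts this to $c\int_{S_K}\varphi_a(y,|q|)\,dy + c|S_K|^m$, using $|K|\sim|S_K|$. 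Re-absorbing constants into~$m$ gives the claim. For the approximation part, I use $\PiY c = c$ on constants (so $\setR\subset Y_h$ plus linearity suffice) to write $q-\PiY q = (q-\mean{q}_{S_K}) - \PiY(q-\mean{q}_{S_K})$; the triangle inequality together with the stability just proved reduces everything to bounding $\int_{S_K}\varphi_a(x,|q-\mean{q}_{S_K}|)\,dx$, which is exactly what the shifted Poincaré inequality of Theorem~\ref{thm:poincareshift} delivers with the gradient $h_K|\nabla q|$ on the right-hand side (modulo an $h_K^m$ error).

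For (b), the extra difficulty is that the $W^{1,1}$-stability of $\Pidiv$ controls only $\Pidiv\bfw$, not $\nabla\Pidiv\bfw$. Here I would exploit Remark~\ref{rem:Pidivlinpres}: $\Pidiv$ preserves constant vectors, so for any constant $\bfc$ we have $\Pidiv\bfw-\bfc = \Pidiv(\bfw-\bfc)$ and hence
\[
\dashint_K|\Pidiv\bfw-\bfc|\,dx \leq c\dashint_{S_K}|\bfw-\bfc|\,dx + c\,h_K\dashint_{S_K}|\nabla\bfw|\,dx.
\]
Choosing $\bfc = \mean{\bfw}_{S_K}$ and applying the classical Poincaré inequality on $S_K$ collapses the first term on the right to $c\,h_K\dashint_{S_K}|\nabla\bfw|\,dx$. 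Now the polynomial inverse estimate on $K$ removes the extra $h_K^{-1}$ when differentiating $\Pidiv\bfw-\bfc$, leaving the purely linear bound
\[
\dashint_K|\nabla\Pidiv\bfw|\,dx \leq c\dashint_{S_K}|\nabla\bfw|\,dx.
\]
From this point the argument is identical to (a): another polynomial inverse estimate pushes $|\nabla\Pidiv\bfw|$ into $L^\infty(K)$, and Theorem~\ref{thm:jensenpxshift} applied with $f=\nabla\bfw$ on $S_K$ delivers the shifted Orlicz estimate up to the error $c_m h_K^m$.

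The main technical obstacle is the interplay between the $x$-dependence of the exponent $p(x)$ and the averaging operators: a naive use of Jensen's inequality fails, which is precisely why the hypothesis $a+\dashint_{S_K}|\cdot|\,dy\leq|S_K|^{-m}$ appears and why one pays an $h_K^m$ remainder. Once the $L^1$-level estimate with $\dashint_{S_K}$ on the right is established, the shifted key estimate is the black box that converts it to the Orlicz-level estimate, and this is where all genuine difficulty is concentrated; the rest is bookkeeping with polynomial inverse estimates and the fact that $\Pidiv$ (resp.~$\PiY$) preserves constants (resp.~linears).
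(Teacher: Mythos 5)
Your proposal is correct and takes the same route as the paper's underlying argument; the paper's own proof is purely a citation that verifies the hypotheses of~\cite[Corollary~3.5]{BrDiSc} and~\cite[Lemma~3.4]{BrDiSc}, which are themselves proven using exactly the three ingredients you assemble (polynomial inverse estimates on $K$, the shifted key estimate Theorem~\ref{thm:jensenpxshift}, and the shifted Poincar\'{e} Theorem~\ref{thm:poincareshift}, together with preservation of constants/linears). The only point glossed over in your write-up is a small scaling manipulation: Theorem~\ref{thm:poincareshift} produces $\varphi_a(\cdot,\abs{q-\mean{q}_{S_K}}/\ell(S_K))$ on the left and $\varphi_a(\cdot,\abs{\nabla q})$ on the right, so to reach the stated form with $\varphi_a(\cdot,\abs{q-\mean{q}_{S_K}})$ on the left and $\varphi_a(\cdot,h_K\abs{\nabla q})$ on the right one applies it to $\ell(S_K)q$ rather than to $q$ (the smallness hypothesis is preserved since $\ell(S_K)\leq 1$); this is routine bookkeeping and does not affect the correctness of the approach.
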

\begin{proof}
  a) Due to Assumption~\ref{ass:PiY} the operator $\PiY$ satisfies
  Assumption~1 of~\cite{BrDiSc} both for $r_0=l_0=l=0$ and
  $r_0=l_0=0$, $l=1$. The first choice and~\cite[Corollary
  3.5]{BrDiSc} imply the first assertion, while the second one
  and~\cite[Lemma~3.4]{BrDiSc} yield the second assertion.
  \\
  b) It follows from Assumption~\ref{ass:proj-div} and the usual
  inverse estimates that $\Pidiv$ satisfies Assumption~1
  of~\cite{BrDiSc} with $l=l_0=r_0=1$.  Therefore, the local
  Orlicz-continuity follows from~\cite[Corollary~3.5]{BrDiSc} and the
  local Orlicz-approximability follows from~\cite[Lemma~3.4]{BrDiSc}.
  \qed
\end{proof}

Next, we present the estimates concerning $\Pidiv$ in terms of the
natural distance.
\begin{theorem}
  \label{thm:app_V}
  Let $\Pidiv$ satisfy Assumption~\ref{ass:proj-div}.  Suppose $p\in
  C^{0,\alpha}(\overline{\Omega})$ with $p^->1$ and let~$s>1$. Then we
  have uniformly with respect to $K \in \mathcal{T}$ and to $\bfv \in
  (W^{1,s\px}(\Omega))^n$ 
  \begin{align*}
    \int_K \bigabs{\bfF_{\mathcal T} (\cdot,\bfD \bfv) -
      \bfF_{\mathcal T} (\cdot,\bfD \Pidiv \bfv)}^2 \,dx &\leq c\,
    \int_{S_K}
    \bigabs{\bfF(\cdot,\bfD \bfv) - \mean{\bfF(\cdot,\bD
        \bv)}_{S_K}}^2 \,dx
    \\
    &+c\,h_k^{2\alpha}\int_{S_K} (1+|\bfD\bfv|^{p(x)s})\,dx.
  \end{align*}
  Here $c$ depends on $p^-$, $p^+$, $[p]_\alpha$, $s$, $\gamma_0$, and
  $\norm{\bfD \bfv}_{p(\cdot)}$.
\end{theorem}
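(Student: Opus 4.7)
The plan is to reduce to the constant-exponent argument of~\cite{BeBeDiRu} on each $K\in\mathcal{T}$, where $\bfF_{\mathcal{T}}$ carries the frozen exponent $p_K=p(x_K)$, and to absorb the mismatch $|p(x)-p_K|\lesssim h_K^\alpha$ into the $h^{2\alpha}$ error, exactly as in the proof of Lemma~\ref{lem:appr1}. First, Lemma~\ref{lem:hammer} rewrites the left-hand side as
\[
\int_K (\phi^K)_{|\bfD\bfv|}\bigl(|\bfD\bfv-\bfD\Pidiv\bfv|\bigr)\,dx,
\]
with $\phi^K$ from~\eqref{eq:phiK}. Choose the global affine function $\bfL(x):=\mean{\bfv}_{S_K}+\mean{\nabla\bfv}_{S_K}(x-x_K)\in(\frP_1(\mathcal{T}))^n\subset X_h$; since $\Pidiv$ is a projection, $\Pidiv\bfL=\bfL$, and by linearity
\[
\bfD\bfv-\bfD\Pidiv\bfv=\bfD(\bfv-\bfL)-\bfD\Pidiv(\bfv-\bfL).
\]

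Setting $\bfQ:=\mean{\bfD\bfv}_{S_K}$, convexity and the $\Delta_2$-property dominate the integrand by $(\phi^K)_{|\bfD\bfv|}(|\bfD\bfv-\bfQ|)+(\phi^K)_{|\bfD\bfv|}(|\nabla\Pidiv(\bfv-\bfL)|)$. For the first summand, the $\bfF$-equivalence of Lemma~\ref{lem:hammer} together with the standard Poincar\'e-type bound $|\bfF(\bfQ)-\mean{\bfF(\cdot,\bfD\bfv)}_{S_K}|^2\lesssim\dashint_{S_K}|\bfF(\cdot,\bfD\bfv)-\mean{\bfF(\cdot,\bfD\bfv)}_{S_K}|^2$ already yields the desired right-hand side, modulo the price $h_K^{2\alpha}\int_{S_K}(1+|\bfD\bfv|^{p(x)s})\,dx$ paid for swapping $\bfF_{\mathcal{T}}\leftrightarrow\bfF$ via the same logarithm-plus-H\"older computation used for the terms $(II)$ and $(III)$ in Lemma~\ref{lem:appr1}. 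For the second summand, I apply Theorem~\ref{thm:ocont}(b) to bound it by $\int_{S_K}(\phi^K)_{|\bfD\bfv|}(|\nabla(\bfv-\bfL)|)\,dx+h_K^m$, and then the shifted Korn inequality (Theorem~\ref{thm:kornshift}), applied to $\bfv-\bfL$ with the mean skew part of $\nabla\bfv$ absorbed into $\bfL$, to trade $|\nabla(\bfv-\bfL)|$ for $|\bfD\bfv-\bfQ|$. Using once more the $\bfF$-equivalence, this is again majorized by the oscillation of $\bfF(\cdot,\bfD\bfv)$ over $S_K$.

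The main obstacle is the pointwise dependence of the shift $a=|\bfD\bfv(x)|$: Theorems~\ref{thm:ocont}, \ref{thm:poincareshift} and~\ref{thm:kornshift} are formulated with a fixed shift. I will handle this either by replacing the pointwise shift by $a=\mean{|\bfD\bfv|}_{S_K}$ and invoking shift-change estimates from Section~\ref{sec:Orlicz spaces} (whose additional errors are absorbed into the $h^{2\alpha}$ term thanks to the smallness condition $a+\dashint_{S_K}|\bfD\bfv|\leq|S_K|^{-m}$), or by keeping the pointwise shift and comparing after localization. The second delicate point is the consistent passage $\bfF_{\mathcal{T}}\leftrightarrow\bfF$ and $p_{\mathcal{T}}\leftrightarrow p$: the H\"older continuity of $p$ gives $|p_{\mathcal{T}}-p|\lesssim h_K^\alpha$ and, combined with Lemma~\ref{lem:pxpy}, produces exactly the correction $h_K^{2\alpha}\int_{S_K}(1+|\bfD\bfv|^{p(x)s})\,dx$ on the right-hand side; everything else follows the constant-exponent blueprint from~\cite{BeBeDiRu}.
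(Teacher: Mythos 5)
Your overall architecture matches the paper: split off the $\bfF_{\mathcal T}\!\leftrightarrow\!\bfF$ mismatch as an $h^{2\alpha}$ error, reduce the remaining term to a constant matrix $\bfQ$, exploit that $\Pidiv$ fixes affine fields, and then combine the Orlicz continuity of $\Pidiv$ (Theorem~\ref{thm:ocont}b) with the shifted Korn inequality (Theorem~\ref{thm:kornshift}). The place where your proposal actually diverges — and where the gap lies — is the choice of~$\bfQ$ and the bound you attach to it. You take $\bfQ=\mean{\bfD\bfv}_{S_K}$ and invoke a ``standard Poincar\'e-type bound'' $\abs{\bfF(\bfQ)-\mean{\bfF(\cdot,\bfD\bfv)}_{S_K}}^2\lesssim \dashint_{S_K}\abs{\bfF(\cdot,\bfD\bfv)-\mean{\bfF(\cdot,\bfD\bfv)}_{S_K}}^2\,dx$. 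Even for constant $p$ this is not a Poincar\'e inequality but a genuine best-approximation lemma about the nonlinear map $\bfF$, and for variable $p$ the expression $\bfF(\bfQ)$ is ill-defined: $\bfF(x,\bfQ)$ still depends on $x\in S_K$, so you would actually have to control $\int_{S_K}\abs{\bfF(x,\bfQ)-\mean{\bfF(\cdot,\bfD\bfv)}_{S_K}}^2\,dx$, and with $\bfQ=\mean{\bfD\bfv}_{S_K}$ there is no reason for this to be dominated by the oscillation of $\bfF(\cdot,\bfD\bfv)$. The paper avoids this trap by choosing $\bfQ$ implicitly so that $\dashint_{S_K}\bfF(\cdot,\bfQ)\,dx=\dashint_{S_K}\bfF(\cdot,\bfD\bfv)\,dx$ (equation~\eqref{eq:Q}), which kills the cross term after adding and subtracting the mean of~$\bfF$, and then bounds $\dashint_{S_K}\abs{\bfF(\cdot,\bfQ)-\mean{\bfF(\cdot,\bfQ)}_{S_K}}^2\,dx$ by a pure $h_K^{2\alpha}$ error via~\eqref{eq:neu2}. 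That implicit choice of $\bfQ$ (and the accompanying estimates) is exactly the step your argument is missing; without it the first summand does not close.

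A secondary, more minor point: you hedge about the shift being pointwise $a=\abs{\bfD\bfv(x)}$, but once one splits $[I]=[I]_1+[I]_2$ around the fixed $\bfQ$ (as the paper does), the term on which Theorems~\ref{thm:ocont} and~\ref{thm:kornshift} are applied carries the constant shift $a=\abs{\bfQ}+\kappa$ on~$K$, so no pointwise-shift issue arises; the ``change of shift'' is accomplished by the triangle-inequality split itself rather than by Lemma~\ref{lem:shift_ch}. Finally, note that after applying the inverse estimate one also needs $\abs{\bfQ}\leq c\,h_K^{-n}$ to check the side conditions of the continuity/Korn theorems and to absorb the $\phi_{\abs{\bfQ}+\kappa}(\cdot,h_K^{m_K})$ tail; this bound is part of~\eqref{eq:Q} and should be recorded explicitly in your argument as well.
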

\begin{remark}
  In contrast to Lemma 4.7~in~\cite{BrDiSc} we have to deal with
  symmetric gradients instead of full ones.  So we need an appropriate
  version of Korn's inequality (bounding gradients by symmetric
  gradients). A modular version for shifted functions with variable
  exponents is not known in literature (but expected). Instead of this
  we switch to the level of functions and bound an integral depending
  on the function by an integral depending on the symmetric gradient
  (see Theorem~\ref{thm:kornshift}). This is possible if we subtract a
  suitable rigid motion.
\end{remark}
\begin{proof}[of Theorem~\ref{thm:app_V}]
We estimate the best approximation error by the projection error using
$\bfw_h = \Pi_h^{\divergence} \bfv$.
\begin{align*}
  \int_K\abs{\bfF_{\mathcal T}(\cdot,\bfD\bfv) - \bfF_{\mathcal T}(\cdot,\bfD\Pi_h^{\divergence} \bfv)}^2\,dx &\leq
    \int_K\abs{\bfF(\cdot,\bfD\bfv) - \bfF(\cdot,\bfD\Pi_h^{\divergence} \bfv)}^2\,dx
  \\
  &\quad+   \int_K\abs{\bfF_{\mathcal T}(\cdot,\bfD\bfv) -
    \bfF(\cdot,\bfD \bfv)}^2\,dx
  \\
  &\quad +   \int_K\abs{\bfF_{\mathcal T}(\cdot,\bfD\Pi_h^{\divergence} \bfv) - \bfF
    (\cdot,\bfD\Pi_h^{\divergence} \bfv)}^2\,dx
  \\
  &=: [I] + [II] + [III].
\end{align*}
 Note that $\bfv \in (W^{1,\px}(\Omega))^n$ implies
  $\bfF(\cdot,\bfD\bfv) \in (L^2(\Omega))^{n\times n}$.  For
  arbitrary $\bfQ \in \setR^{n \times n}_{sym}$ we have
  \begin{align*}
    \lefteqn{[I]:=\dashint_K \abs{\bfF (\cdot,\bfD \bfv) -
        \bfF(\cdot,\bfD \Pi_h^{\divergence} \bfv)}^2 \,dx}
    \hspace{5mm}
    \\
    &\leq c\, \dashint_K \abs{\bfF(\cdot,\bfD \bfv) -
      \bfF(\cdot,\bfQ)}^2 \,dx + c\, \dashint_K \abs{\bfF (\cdot,\bfD
      \Pi_h^{\divergence} \bfv) - \bfF(\cdot,\bfQ)}^2 \,dx
    \\
    &=: [I]_1 + [I]_2.
  \end{align*}
  Let $\frp \in (\frP_1)^n(S_K)$ be such that $\bfD \frp =
  \bfQ$. Due to $\Pi_h^{\divergence} \frp = \frp$ there holds $\bfQ = \bfD \frp =
  \bfD \Pi_h^{\divergence} \frp$. We estimate by Lemma~\ref{lem:hammer} as follows
  \begin{align}
    [I]_2 &\leq c\, \dashint_K
    \phi_{\abs{\bfQ}+\kappa}\big(\cdot,\abs{\bfD\Pi_h^{\divergence}
      \bfv - \bfQ} \big)\,dx\nonumber
    \\
    &= c\, \dashint_K \phi_{\abs{\bfQ}+\kappa}\big(\cdot,\abs{\bfD \Pi_h^{\divergence}
      (\bfv -\frp) } \big)\,dx.\label{eq:}
  \end{align}
  Now we want to estimate $\abs{\bfD \Pi_h^{\divergence} (\bfv
    -\frp)}$. Since the function $\Pi_h^{\divergence} (\bfv -\frp)$
  belongs to a finite dimensional function space we can apply inverse
  estimates. So we have for every rigid motion $\mathcal R_K$
\begin{align*}
  \|\bfD \Pi_h^{\divergence} (\bfv -\frp) \|_{\infty,K}&\leq
  \,c\,h_K^{-1}\| \Pi_h^{\divergence} (\bfv -\frp) -\mathcal
  R_K\|_{\infty,K}
  \\
  &= \,c\,h_K^{-1}\,\| \Pi_h^{\divergence} (\bfv -\frp -\mathcal
  R_K)\|_{\infty,K}
  \\
  &\leq\,c\,\dashint_K \Big|\frac{ \Pi_h^{\divergence} (\bfv -\frp
    -\mathcal R_K)}{h_K}\Big|\,dy.
\end{align*}
Now applying Theorem~\ref{thm:ocont} and Theorem~\ref{thm:kornshift}
with an appropriate choice of $\mathcal R_K$ yields for
$m_K:=\max\set{n(p_{S_K}^+-2)+2,2}$
\begin{align*}
  \|\bfD \Pi_h^{\divergence} (\bfv -\frp) \|_{\infty,K}
  &\leq\,c\,\dashint_{S_K} \Big|\frac{ (\bfv -\frp -\mathcal
    R_K)}{h_K}\Big|\,dy+c\,h^{m_K}_K
  \\
  &\leq\,c\,\dashint_{S_K} |\bfD(\bfv -\frp)|\,dy+c\,h^{m_K}_K.
\end{align*}
Inserting this in~\eqref{eq:} and using convexity of
$\phi_{\abs{\bfQ}+\kappa}(x,\cdot)$ implies
  \begin{align*}
    [I]_2 &\leq c\,
    \dashint_K\phi_{\abs{\bfQ}+\kappa}\bigg(\cdot,\dashint_{S_K}\abs{\bfD
      (\bfv -\frp) } \big)\,dy +c\,h_K^{m_K}\bigg)\,dx
    \\
    &\leq c\,
    \dashint_K\phi_{\abs{\bfQ}+\kappa}\bigg(\cdot,\dashint_{S_K}\abs{\bfD
      (\bfv -\frp) } \big)\,dy\bigg) +c\,
    \dashint_K\phi_{\abs{\bfQ}+\kappa}(\cdot,h_K^{m_K})\,dx.
  \end{align*}
As a consequence of Theorem~\ref{thm:jensenpxshift} for
  $\bfv -\frp$ , $m=2$ and $a=\abs{\bfQ}$ we gain
  \begin{align*}
    [I]_2 &\leq c\,
    \dashint_{S_K}\phi_{\abs{\bfQ}+\kappa}(\cdot,\abs{\bfD (\bfv
      -\frp) } \big)\,dx+c\,h_K^2 +c\,
    \dashint_K\phi_{\abs{\bfQ}+\kappa}(\cdot,h_K^{m_K})\,dx.
  \end{align*}
  In order to proceed we need a special choice of $\bfQ$.  Following
  the arguments from~\cite{BrDiSc} (Sec.~4) one can show the
  existence of $\bfQ\in\mathbb R^{n\times n}_{sym}$ such that
  \begin{align}
    \label{eq:Q}
    \dashint_{S_K}\bfF(\cdot,\bfQ)\,dx&=\dashint_{S_K}\bfF(\cdot,\bfD\bfv)\,dx,\quad 
    |\bfQ|\leq\,c\,h_K^{-n},
    \\
    \label{eq:neu2}
    \dashint_{S_K} \bigabs{\bfF(\cdot,\bfQ) -
      \langle\bfF(\cdot,\bfQ)\rangle_{S_K}}^2 \,dx 
    &\leq\,c\,h_K^{2\alpha}\,\bigg(\dashint_{S_K}\ln(\kappa+|\bfD\bfv|)^2
    (\kappa+|\bfD\bfv|)^{p(x)}\,dx +1\bigg).
  \end{align}
  Due to~\eqref{eq:Q}, convexity of
  $\phi_{\abs{\bfQ}+\kappa}(x,\cdot)$, and the choice of $m_K$ we have
  \begin{align*}
    [I]_2 &\leq c\, \dashint_{S_K}
    \phi_{\abs{\bfQ}+\kappa}\big(\cdot,\abs{\bfD (\bfv -\frp) }
    \big)\,dx+c\,h_K^2
    +c\,\dashint_{S_K}h^{m_K}_K\phi_{\abs{\bfQ}+\kappa}(\cdot,1)\,dx
    \\
    &\leq c\, \dashint_{S_K}
    \phi_{\abs{\bfQ}+\kappa}\big(\cdot,\abs{\bfD (\bfv -\frp) }
    \big)\,dx+c\,h_K^2 +c\,\dashint_{S_K}h^{m_K}_K
    (1+h_K^{-n(p(\cdot)-2)})\,dx
    \\
    &\leq c\, \dashint_{S_K}
    \phi_{\abs{\bfQ}+\kappa}\big(\cdot,\abs{\bfD \bfv - \bfQ}
    \big)\,dx+c\,h_K^2.
  \end{align*}
  Now, with Lemma~\ref{lem:hammer}
  \begin{align*}
    [I]_2 &\leq c\, \dashint_{S_K} \abs{\bfF(\cdot,\bfD \bfv) -
      \bfF(\cdot,\bfQ)}^2 \,dx+c\,h_K^2.
  \end{align*}
  Since, $\abs{K} \sim \abs{S_K}$ and $K \subset S_K$ we also have
  \begin{align*}
    [I]_1 &\leq c\, \dashint_{S_K} \abs{\bfF(\cdot,\bfD \bfv) -
      \bfF(\cdot,\bfQ)}^2 \,dx.
  \end{align*}
  Overall, we get
  \begin{align*}
  [I] &\leq c\,
    \dashint_{S_K} \bigabs{\bfF(\cdot,\bfD \bfv) -
      \bfF(\cdot,\bfQ)}^2 \,dx+c\,h_K^2,
  \end{align*}
which means we have to estimate the integral on the right-hand-side.
Choosing $\bfQ$
  via~\eqref{eq:Q} and using~\eqref{eq:neu2} we have
  \begin{align*}
    \dashint_{S_K} &\bigabs{\bfF(\cdot,\bfD \bfv) -
      \bfF(\cdot,\bfQ)}^2 \,dx
    \\
    &\leq c \dashint_{S_K} \bigabs{\bfF(\cdot,\bfD \bfv) -
      \langle\bfF(\cdot,\bfD\bfv)\rangle_{S_K}}^2 \,dx+c
    \dashint_{S_K} \bigabs{\bfF(\cdot,\bfQ) -
      \langle\bfF(\cdot,\bfQ)\rangle_{S_K}}^2 \,dx
    \\
    &\leq c \dashint_{S_K} \bigabs{\bfF(\cdot,\bfD \bfv) -
      \langle\bfF(\cdot,\bfD\bfv)\rangle_{S_K}}^2
    \,dx+c\,h_K^{2\alpha} \bigg(\dashint_{S_K}\ln(\kappa+|\bfD\bfv|)^2
    (\kappa+|\bfD\bfv|)^{p(x)}\,dx +1\bigg)
    \\
    &\leq c \dashint_{S_K} \bigabs{\bfF(\cdot,\bfD \bfv) -
      \langle\bfF(\cdot,\bfD\bfv)\rangle_{S_K}}^2
    \,dx+c\,h_K^{2\alpha} \bigg(\dashint_{S_K}
    (1+|\bfD\bfv|)^{s p(x)}\,dx \bigg).
  \end{align*}
The estimate for $[II]$ and $[III]$ are
similar. We have
\begin{align*}
  \lefteqn{\bigabs{\bfF_{\mathcal T}(x,\bfQ)-\bfF(x,\bfQ)}} \quad &
  \\
  &\leq c\, \abs{p_{\mathcal{T}}(x) - p(x)}\, \abs{\ln(\kappa+
    \abs{\bfQ})} \Big( (\kappa +
  \abs{\bfQ})^{\frac{p_{\mathcal{T}}(x)-2}{2}} + (\kappa +
  \abs{\bfQ})^{\frac{p(x)-2}{2}}\Big) \abs{\bfQ}.
\end{align*}
This implies
\begin{align*}
  [II] &\leq c\, h^{2\alpha} \bigg(\int_\Omega (1+ \abs{\bfD
    \bfv})^{s p(x)}\,dx \bigg),
  \\
  [III] &\leq c\, h^{2\alpha} \bigg(\int_\Omega (1+ \abs{\bfD
    \Pi_h^{\divergence} \bfv})^{s p(x)}\,dx \bigg).
\end{align*}
We can use the stability of~$\Pi_h^{\divergence}$, see
Theorem~\ref{thm:ocont} (for $a=0$ and the exponent~$s p(\cdot)$) to
get
\begin{align*}
  [III] &\leq c\, h^{2\alpha} \bigg(\int_\Omega (1+ \abs{\bfD
    \bfv})^{s p(x)}\,dx \bigg). 
\end{align*}
\qed 
\end{proof}
%
\subsection{Error estimate for the velocity}
Collecting the estimates and results of the previous sections we
obtain the most useful error estimate.
\begin{theorem} 
  \label{thm:appr2}
  Let $\Pidiv$ satisfy Assumption~\ref{ass:proj-div}. Let $(\bfv,q)$
  and $(\bfv_h,q_h)$ be solutions of the problems \problemQ{} and
  \problemQh{}, respectively.  Suppose $p\in
  C^{0,\alpha}(\overline{\Omega})$ with $p^->1$ and let $s>1$.  We
  have the following estimate
  \begin{align*}
    \norm{\bfF_{\mathcal T}(\cdot,\bfD \bfv)-\bfF_{\mathcal
        T}(\cdot,\bfD \bfv_h)}_2^2&\leq c\, \sum_{K \in \mathcal{T}}
    \int_{S_K} \bigabs{\bfF(\cdot,\bfD \bfv) - \mean{\bfF(\cdot,\bfD
        \bfv)}_{S_K}}^2 \,dx
    \\
    &\qquad + c\, \inf_{\mu_h \in Y_h} \sum_{K\in\mathcal T}\int_K
    (\phi^K_{\abs{\bfD
        \bfv}})^\ast (\cdot,\abs{q - \mu_h})\,dx
    \\
    &\qquad + c\,h^{2\alpha}\int_{\Omega} (1+|\bfD\bfv|)^{p(x)s}\,dx.
  \end{align*}
  Here $c$ depends on $p^-$, $p^+$, $[p]_\alpha$, $\gamma_0$, and
  $\norm{\bfD \bfv}_{p(\cdot)}$.
\end{theorem}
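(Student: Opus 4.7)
The plan is to combine the abstract best-approximation estimate of Lemma~\ref{lem:appr1} with the concrete local bound on the divergence-preserving interpolation error provided by Theorem~\ref{thm:app_V}. The statement of Lemma~\ref{lem:appr1} already contains exactly the pressure-infimum term and the $h^{2\alpha}$ consistency term that appear on the right-hand side, so essentially no further work is needed on those two contributions. The entire task reduces to handling the first term, namely
\begin{align*}
\inf_{\bfw_h \in V_{h,\divergence}} \norm{\bfF_{\mathcal T}(\cdot,\bfD \bfv)-\bfF_{\mathcal T}(\cdot,\bfD \bfw_h)}_2^2,
\end{align*}
and showing that this is controlled by the oscillation-type quantity $\sum_{K} \int_{S_K} |\bfF(\cdot,\bfD\bfv) - \mean{\bfF(\cdot,\bfD\bfv)}_{S_K}|^2\,dx$ plus a contribution that can be absorbed into the existing $h^{2\alpha}$ term.

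First I would verify that the choice $\bfw_h := \Pidiv \bfv$ is admissible. By Assumption~\ref{ass:proj-div}(b), $\Pidiv$ maps $V$ into $V_h$, and by~\eqref{eq:div_preserving} it preserves the divergence in the $Y_h^*$-sense; since the discrete divergence-free condition in $V_{h,\divergence}$ is tested against $\eta_h \in Y_h$, this is exactly the condition needed, and hence $\Pidiv\bfv \in V_{h,\divergence}$ as soon as $\divergence \bfv = 0$. (For the actual solution $\bfv$ of \problemP{} this holds in the stronger $Y$-sense, hence a fortiori in the $Y_h^*$-sense.) Therefore the infimum can be bounded by taking $\bfw_h = \Pidiv \bfv$, and one obtains
\begin{align*}
\inf_{\bfw_h \in V_{h,\divergence}}\!\!\norm{\bfF_{\mathcal T}(\cdot,\bfD \bfv)-\bfF_{\mathcal T}(\cdot,\bfD \bfw_h)}_2^2
\leq \sum_{K \in \mathcal{T}} \int_K \bigabs{\bfF_{\mathcal T}(\cdot,\bfD \bfv)-\bfF_{\mathcal T}(\cdot,\bfD \Pidiv\bfv)}^2\,dx.
\end{align*}

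Next I would apply Theorem~\ref{thm:app_V} locally on each simplex $K \in \mathcal{T}$ to bound each integrand by
\begin{align*}
c \int_{S_K} \bigabs{\bfF(\cdot,\bfD \bfv) - \mean{\bfF(\cdot,\bfD \bfv)}_{S_K}}^2 \,dx + c\, h_K^{2\alpha}\int_{S_K} (1+|\bfD\bfv|^{p(x)s})\,dx.
\end{align*}
Summing over all $K \in \mathcal{T}$ and exploiting the finite overlap of the patches $\{S_K\}_{K \in \mathcal{T}}$ (a consequence of the non-degeneracy $\gamma_0$ of the triangulation), the first contribution yields the oscillation term in the statement, while the second is controlled by $c\,h^{2\alpha} \int_\Omega (1+|\bfD\bfv|)^{p(x)s}\,dx$, which merges with the analogous term already present from Lemma~\ref{lem:appr1}.

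Putting these pieces together gives the claimed inequality. The only non-routine step is the verification that $\Pidiv \bfv \in V_{h,\divergence}$, which is immediate from Assumption~\ref{ass:proj-div}; the rest is bookkeeping and invocation of the two preparatory results. I do not foresee any real obstacle, since the heavy analytic work (symmetric-gradient Korn estimates on shifted Orlicz scales, choice of the representative matrix $\bfQ$ in~\eqref{eq:Q}--\eqref{eq:neu2}, and the shifted Poincar\'e/Jensen machinery) has already been carried out inside the proof of Theorem~\ref{thm:app_V}.
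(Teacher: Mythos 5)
Your proposal is correct and matches the paper's own proof exactly: take $\bfw_h := \Pidiv\bfv$ in Lemma~\ref{lem:appr1} (admissible since $\Pidiv$ is divergence-preserving, so $\bfv\in V_{\divergence}$ gives $\Pidiv\bfv\in V_{h,\divergence}$), apply Theorem~\ref{thm:app_V} on each $K$, and sum using the finite overlap of the patches $S_K$. The paper states this in two sentences; your write-up simply fills in the bookkeeping.
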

\begin{proof}
  Since $\Pidiv$ is divergence-preserving
  (see~\eqref{eq:div_preserving}) $\bfv \in V_{\divergence}$ implies
  that $\Pidiv \bfv \in V_{h,\divergence}$. The claim follows from
  Lemma~\ref{lem:appr1} with $\bfw_h := \Pidiv \bfv$ and
  Theorem~\ref{thm:app_V}.
\qed
\end{proof}
\begin{corollary}
\label{cor:vel}
Let the assumptions of Theorem~\ref{thm:appr2} be satisfied.
\\
In addition to all previous hypothesis assume that $\bfF(\cdot,\bfD
\bfv)\in (W^{1,2}(\Omega))^{n\times n}$ 
\\ 
and $q\in
W^{1,p'(\cdot)}$. Then we have
\begin{align*}
  \norm{\bfFT(\cdot,\bfD \bfv)-\bfFT(\cdot,\bfD \bfv_h)}_2&\leq
  c\big(h^{\frac{\min\set{(p^+)',2}}{2}}+h^\alpha\big).
\end{align*}
\end{corollary}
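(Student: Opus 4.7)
The plan is to combine Theorem~\ref{thm:appr2} with the standard $L^2$-Poincaré inequality applied to $\bfF(\cdot,\bfD\bfv)$, the Scott--Zhang--type approximation properties of $\PiY$ for the pressure, and the shift-conjugate identities for Orlicz N-functions. Theorem~\ref{thm:appr2} furnishes three terms on the right-hand side, which I would estimate separately.

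For the first term, namely $\sum_K \int_{S_K} |\bfF(\cdot,\bfD\bfv) - \mean{\bfF(\cdot,\bfD\bfv)}_{S_K}|^2\,dx$, I would use that $\bfF(\cdot,\bfD\bfv)\in (W^{1,2}(\Omega))^{n\times n}$: the classical Poincaré inequality on each patch $S_K$ (of diameter $\sim h_K$, with uniformly bounded overlap by mesh non-degeneracy) yields $ch^2\,\|\nabla\bfF(\cdot,\bfD\bfv)\|_2^2\leq ch^2$. The third term $h^{2\alpha}\int_\Omega(1+|\bfD\bfv|)^{p(\cdot)s}\,dx$ is already of the desired form: the hypothesis $\bfF(\cdot,\bfD\bfv)\in W^{1,2}$ together with Sobolev embedding implies $\bfD\bfv \in L^{\frac{n}{n-2}p(\cdot)}$, so picking $s>1$ close enough to $1$ makes the integral finite, giving a contribution of $ch^{2\alpha}$.

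The real work is in the pressure term $\inf_{\mu_h}\sum_K\int_K (\phi^K_{|\bfD\bfv|})^*(|q-\mu_h|)\,dx$. I would choose $\mu_h:=\PiY q$ and apply part (a) of the Orlicz-approximability result (Theorem~\ref{lem:PiYstab}) to the conjugate N-function $(\phi^K_{|\bfD\bfv|})^*$, which by the general shift identity is equivalent (up to $\Delta_2$-constants) to $(\phi^K)^*_{(\phi^K)'(|\bfD\bfv|)}$ and is of $(p^-_K)'$ growth at infinity. This bounds the term by
\begin{align*}
c\sum_K\int_{S_K}(\phi^K_{|\bfD\bfv|})^*(h_K|\nabla q|)\,dx
\end{align*}
plus negligible remainders. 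Using the two-regime pointwise estimate $(\phi_a)^*(t)\leq c\min\{t^2(\kappa+a)^{2-p},\,t^{p'}\}$ for the shifted conjugate, one splits the integration according to whether $h|\nabla q|$ is small or large compared to $(\kappa+|\bfD\bfv|)^{p-1}$: in the large regime the integrand is $\leq c(h|\nabla q|)^{(p_K)'}$, while in the small regime one bounds by $(h|\nabla q|)^2(\kappa+|\bfD\bfv|)^{2-p_K}$ and absorbs into the integrability of $\bfD\bfv$ and $\nabla q$. Using $q\in W^{1,p'(\cdot)}$ and $p^-\leq p_K\leq p^+$, this produces a contribution of order $h^{\min\{(p^+)',2\}}$.

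The main obstacle is the careful bookkeeping in this last step: the shifted conjugate $(\phi_a)^*$ changes effective growth between quadratic and $(p')$-growth according to the size of $a=|\bfD\bfv|$ relative to $t=h|\nabla q|$, and the local exponent $p_K$ varies across the mesh. Combining variable-exponent Young's inequality with the $\log$-Hölder stability of $\phi^K$ (via Lemma~\ref{lem:pxpy}) keeps all constants uniform. Adding the three contributions gives
\begin{align*}
\norm{\bfFT(\cdot,\bfD\bfv)-\bfFT(\cdot,\bfD\bfv_h)}_2^2 \leq c\bigl(h^2 + h^{\min\{(p^+)',2\}} + h^{2\alpha}\bigr),
\end{align*}
and taking the square root yields the claimed estimate $c(h^{\min\{(p^+)',2\}/2}+h^\alpha)$.
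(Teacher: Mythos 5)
Your treatment of the first term (Poincar\'e applied to $\bfF(\cdot,\bfD\bfv)\in W^{1,2}$) and of the third term (Sobolev embedding giving $\bfD\bfv\in L^{\px s}$ for $s$ close to $1$) is exactly what the paper does. The pressure term is where the proposal departs from the paper, and where it has a genuine gap.

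You propose to apply the Orlicz-approximability result (Theorem~\ref{lem:PiYstab}(a)) directly to the conjugate $N$-function $(\phi^K_{\abs{\bfD\bfv}})^*$. That theorem is stated for a constant shift $a\ge 0$; its hypothesis $a+\dashint_Q\abs{\nabla q}\,dy\le \abs{Q}^{-m}$ only makes sense for scalar $a$. In your application the shift would be $(\phi^K)'(\abs{\bfD\bfv(x)})$, i.e.\ a function of $x$, so the cited theorem does not apply. The paper avoids this entirely: it never invokes Orlicz-approximability for the shifted conjugate. Instead it first \emph{removes} the shift by splitting $\mathcal{T}$ into $\mathcal{T}^+=\set{K: p_K^-\ge 2}$ and $\mathcal{T}^-=\set{K: p_K^-<2}$. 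On $\mathcal{T}^+$ one has the monotone bound $(\phi^K_{\abs{\bfD\bfv}})^*(t)\le(\phi^K)^*(t)\le t^{p_K'}$, reducing to a standard $L^{p_K'}$-estimate which is then proven by hand via $q-\PiY q=(q-\mean{q}_{S_K})-\PiY(q-\mean{q}_{S_K})$, the $L^1$-stability of $\PiY$, Poincar\'e in $L^{p_K'}$, Lemma~\ref{lem:pxpy}, and Theorems~\ref{thm:jensenpxshift}/\ref{thm:poincareshift}. On $\mathcal{T}^-$ the paper uses the two-term bound $(\phi^K_{\abs{\bfD\bfv}})^*(t)\lesssim t^{p_K'}+(\kappa+\abs{\bfD\bfv})^{p_K'-2}t^2$ (a sum, not a min), handles the first term as before and the second by Young's inequality (which is possible precisely because $p_K'>2$ there).

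A related slip: your claimed two-regime bound $(\phi_a)^*(t)\le c\min\set{(\kappa+a)^{2-p}t^2,\,t^{p'}}$ is reversed when $p<2$. For $p'>2$, Lemma~\ref{lem:shift_sim} gives $(\phi_a)^*(t)\sim\max$ of the two expressions, not $\min$; the $\min$ characterization is valid only for $p\ge 2$. Your regime-splitting argument can still be repaired (since the asymptotic equivalence in each regime is correct), but the absorption step ``bounds by $(h\abs{\nabla q})^2(\kappa+\abs{\bfD\bfv})^{2-p_K}$ and absorbs'' only works by Young with exponent $p_K'/2>1$, i.e.\ for $p_K<2$; for $p_K\ge 2$ one has $\abs{\nabla q}^2$ possibly non-integrable and must instead use the pure $t^{p_K'}$ bound — exactly the case distinction the paper builds in from the start. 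In short: the final rate you quote is the paper's, but the route through a shifted-conjugate approximability theorem is not available, and the bookkeeping you acknowledge as the main obstacle is handled in the paper by the $\mathcal{T}^\pm$ split rather than a pointwise $\min$.
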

\begin{proof}
  We estimate the three integrals which appear in
  Theorem~\ref{thm:appr2} separately. By Poincar\'{e}'s inequality we
  have
\begin{align*}
  \sum_{K \in \mathcal{T}} \int_{S_K} \bigabs{\bfF(\cdot,\bfD \bfv) -
    \mean{\bfF(\cdot,\bfD \bfv)}_{S_K}}^2 \,dx&\leq \,c\,\sum_{K \in
    \mathcal{T}} \int_{S_K} h_K^2\bigabs{\nabla\bfF(\cdot,\bfD
    \bfv)}^2 \,dx
  \\
  &\leq \,c\,h^2 \int_{\Omega} \bigabs{\nabla\bfF(\cdot,\bfD \bfv)}^2
  \,dx\leq c\,h^2.
\end{align*}
As $\bfF(\cdot,\bfD \bfv)\in W^{1,2}(\Omega)\hookrightarrow
L^{\frac{2n}{n-2}}(\Omega)$ we gain
\begin{align*}
  \int_{\Omega} |\bfD \bfv|^{p(\cdot)s}\,dx<\infty,
\end{align*}
provided $s\leq\frac{n}{n-2}$.  This allows us to bound the third term
by $c\,h^2$. The term involving the pressure requires more effort. We
choose $\mu_h$ by $\mu_h=\Pi_h^Y q$ on $K$ and decompose
\begin{align*}
  \int_\Omega (\phi^K_{\abs{\bfD \bfv}})^\ast &(\cdot,\abs{q -
    \mu_h})\,dx=\sum_{K \in \mathcal{T}} \int_K (\phi^K_{\abs{\bfD
      \bfv}})^\ast (\cdot,\abs{q - \Pi_h^Y q})\,dx
  \\
  &=\sum_{K \in \mathcal{T}^+} \int_K (\phi^K_{\abs{\bfD \bfv}})^\ast
  (\cdot,\abs{q - \Pi_h^Y q})\,dx + \sum_{K \in \mathcal{T}^{-}}
  \int_K (\phi^K_{\abs{\bfD \bfv}})^\ast (\cdot,\abs{q - \Pi_h^Y
    q})\,dx,
\end{align*}
with the abbreviations
\begin{align*}
  \mathcal{T}^+&:=\set{K\in\mathcal T: p_K^{-}\geq 2},
  \\
  \mathcal{T}^{-}&:=\set{K\in\mathcal T: p_K^{-}< 2}.
\end{align*}
For $K \in\mathcal{T}^+$ we have $(\phi^K_{\abs{\bfD \bfv}})^\ast
(\cdot,t)\leq (\phi^K)^\ast (\cdot,t)\leq t^{p_K'(\cdot)}$ such that
\begin{align*}
\int_K (\phi^K_{\abs{\bfD
        \bfv}})^\ast (\cdot,\abs{q - \Pi_h^Y q})\,dx &\leq \int_K
    \abs{q - \Pi_h^Y q}^{p_K'(\cdot)}\,dx. 
  \end{align*}
In the following we will show that
\begin{align}
  \label{eq:do}
\dashint_K \abs{q - \Pi_h^Y q}^{p_K'(\cdot)}\,dx\leq
\,c\,h_K^{(p^+)'}\,\dashint_K \Big( \abs{\nabla
  q}^{p'(\cdot)}+1\Big)\,dx  +c\,h_K^{n+2}. 
\end{align}
We use the identity $q-\PiY q = (q-\mean{q}_{S_K}) -
\PiY(q-\mean{q}_{S_K})$, the triangle inequality together with
$\Delta_2(\phi^*)<\infty$, and the local stability of $\PiY$ from
Lemma~\ref{lem:PiYstab} with $m=n+2$ to conclude that
\begin{align*}
  \dashint_K \abs{q - \Pi_h^Y q}^{p_K'(\cdot)}\,dx &\leq
  \,c\,\dashint_K \abs{q -
    \mean{q}_{S_K}}^{p_K'(\cdot)}\,dx+\,c\dashint_K \abs{\Pi_h^Y(q -
    \mean{q}_{S_K})}^{p_K'(\cdot)}\,dx
  \\
  &=:\{I\} +\{II\}.
\end{align*}
We estimate the first term by
\begin{align*}
  \{I\} &\leq \,c\,\dashint_K \abs{q -
    \mean{q}_{K}}^{p_K'(\cdot)}\,dx+c\,\dashint_K \abs{\mean{q}_K -
    \mean{q}_{S_K}}^{p_K'(\cdot)}\,dx
  \\
  &\leq \,c\,\dashint_K \abs{\nabla q }^{p_K'(\cdot)}\,dx+c\,
  \abs{\mean{q}_K - \mean{q}_{S_K}}^{p_K'(\cdot)}
  \\
  &=:\{I\}_1+\{I\}_2,
\end{align*}
using Poincar\'{e}'s inequality on $L^{p_K'}(K)$. If
$\abs{\mean{q}_K-\mean{q}_{S_K}}\leq h_K^n$ we clearly have
$\{I\}_2\leq c\,h_K^{n+2}$. Otherwise we can use Lemma~\ref{lem:pxpy}
with $m=n$, Theorem~\ref{thm:jensenpxshift} with $a=0$ and
Poincar\'{e}'s inequality from Theorem~\ref{thm:poincareshift} and
gain
\begin{align*}
  \{I\}_2&\leq c\, \abs{\mean{q
      -\mean{q}_{S_K}}_K}^{p'(\cdot)}\leq\,c\,\dashint_{K}|q-\mean{q}_{S_K}|^{p'(\cdot)}\,dx+c\,h_K^{n+2} 
  \\
  &\leq\,c\,\dashint_{S_K}|q-\mean{q}_{S_K}|^{p'(\cdot)}\,dx+c\,h_K^{n+2}\leq
  \,c\,\dashint_{S_K}|h_K\nabla q|^{p'(\cdot)}\,dx+c\,h_K^{n+2}.
\end{align*}
Note that the application of Lemma~\ref{lem:pxpy} was possible as~\ref{lem:PiYstab}
\begin{align*}
  \abs{\mean{q}_K - \mean{q}_{S_K}}&\leq \dashint_K
  |q|\,dx+\dashint_{S_K} |q|\,dx \leq \,c\,\dashint_{S_K} |q|\,dx
  \\
  &\leq |S_K| \|q\|_1\leq c\,h_K^{-n}.
\end{align*}
For $\{II\}$ again we first consider the case $\abs{\Pi_h^Y(q -
  \mean{q}_{S_K})}\leq h_K^n$ in which the estimate is
obvious. Otherwise, we apply Lemma~\ref{lem:pxpy} with $m=n$ as well
as Lemma~\ref{lem:PiYstab} to gain
\begin{align*}
  \{II\}&\leq \,c\,\dashint_K \abs{\Pi_h^Y(q -
    \mean{q}_{S_K})}^{p'(\cdot)}\,dx
  \\
  &\leq \,c\,\,\dashint_K \abs{q -
    \mean{q}_{S_K}}^{p'(\cdot)}\,dx+c\,h_K^{n+2}
  \\
  &\leq \,c\,\,\dashint_{S_K} \abs{q -
    \mean{q}_{S_K}}^{p'(\cdot)}\,dx+c\,h_K^{n+2}
  \\
  &\leq \,c\,\,\dashint_{S_K} \abs{h_K\nabla q
  }^{p'(\cdot)}\,dx+c\,h_K^{n+2}.
\end{align*}
Note that the application of Lemma~\ref{lem:pxpy} is justified since 
\begin{align*}
  \|\Pi_h^Y(q - \mean{q}_{S_K})\|_\infty&\leq \dashint_K
  \abs{\Pi_h^Y(q - \mean{q}_{S_K})}\,dx \leq \,c\,\dashint_{S_K} |q -
  \mean{q}_{S_K}|\,dx
  \\
  &\leq |S_K| \|q\|_1\leq c\,h_K^{-n}.
\end{align*}
Here, we used the inverse estimates on $Y_h$ and
Assumption~\ref{eq:PiYstab}. Finally $(p_K)'\leq p'(x)$ on $K$ yields
the claimed inequality~\eqref{eq:do}. This implies
\begin{align*}
\int_K (\phi^K_{\abs{\bfD
        \bfv}})^\ast (\cdot,\abs{q - \mu_h})\,dx 
    &\leq c\,h^{(p^+)'}\int_K (1+\abs{\nabla q}^{p'(\cdot)})\,dx  +c\,h^{n+2},
\end{align*}
for $K\in\mathcal T^{-}$.  If $K \in\mathcal{T}^{-}$ we estimate
\begin{align*}
  \int_K (\phi^K_{\abs{\bfD \bfv}})^\ast (\cdot,\abs{q - \mu_h})\,dx
  &\leq \int_K \abs{q - \mu_h}^{p_K'(\cdot)}\,dx+\int_K
  (\kappa+|\bfD\bfv|)^{p_K'(\cdot)-2}\abs{q - \mu_h}^2\,dx
  \\
  &\leq \int_K \abs{q - \mu_h}^{p_K'(\cdot)}\,dx+\int_K
  (\kappa+|\bfD\bfv|)^{p_K'(\cdot)-2}\abs{q - \mu_h}^2\,dx.
\end{align*}
The first integral can be estimated via the calculations above,
whereas for the second we gain by Young's inequality and~\eqref{eq:do}
\begin{align*}
  \int_K (\kappa+|\bfD\bfv|)^{p_K'(\cdot)-2}\abs{q -
    \mu_h}^2\,dx&=h_K^2\int_K
  (\kappa+|\bfD\bfv|)^{p_K'(\cdot)-2}\abs{h_K^{-1}(q - \Pi^Y_h)}^2\,dx
  \\
  &\leq h_K^2\bigg(\int_K (\kappa+|\bfD\bfv|)^{p_K(\cdot)}\,dx+\int_K
  \abs{h_K^{-1}(q - \Pi^Y_h)}^{p'(\cdot)}\,dx\bigg)
  \\
  &\leq c\,h^2\bigg(\int_K (1+|\bfD\bfv|^{p(\cdot)s})\,dx+\int_K
  \abs{\nabla q }^{p'(\cdot)}\,dx+h^n\bigg).
\end{align*}
Plugging all together yields
\begin{align*}
  \int_\Omega (\phi^K_{\abs{\bfD \bfv}})^\ast &(\cdot,\abs{q -
    \mu_h})\,dx
  \\
  &\leq c\,h^2\int_\Omega
  (1+|\bfD\bfv|^{p(\cdot)s})\,dx+c\,h^{\min\set{(p^+)',2}}\int_\Omega
  \abs{\nabla q }^{p'(\cdot)}\,dx+c |\mathcal T|h^{2+n}
  \\
  &\leq c\,h^2\int_\Omega
  (1+|\bfD\bfv|^{p(\cdot)s})\,dx+c\,h^{\min\set{(p^+)',2}}\int_\Omega
  \abs{\nabla q }^{p'(\cdot)}\,dx+c\,h^{2}
  \\
  &\leq c\,h^{\min\set{(p^+)',2}}.
\end{align*}
Plugging all estimates together proves the claim.
\qed
\end{proof}
\section{Best Approximation for the pressure}
\label{sec:appr-with-pressure}
We are now discussing best approximation results for the pressure.  As
in the classical Stokes problem we need the discrete inf-sup condition
to recover information on the discrete pressure. We start by extending
this condition to Orlicz spaces.
\subsection{Inf-sup condition on generalized Lebesgue
  spaces} \label{sec:inf-sup} 
The next lemma contains a continuous inf-sup condition. It is
formulated for John domains. Note that all Lipschitz domains and in
particular all polyhedral domains are John domains. We will apply the
following lemmas to simplices~$K$ and their neighborhood~$S_K$, which
have uniform John constants due to the non-degeneracy of the mesh.
For a precise definition of John domains we refer to~\cite{DiHaHaRu}.
\begin{lemma}[\cite{DiHaHaRu}, Thm.~14.3.18]
  \label{lem:ism}
  Let $G \subset \setR^n$ be a John domain and let $p\in \PPln(G)$
  with $1<p^{-}\leq p^+<\infty$. Then, for all $q \in
  L^{p'(\cdot)}_0(G)$ we have
  \begin{align*}
    \norm{q}_{L^{p'(\cdot)}_0(G) } &\le c\, \sup_{\bfxi \in
      W^{1,\px}_0(G)\,:\, \norm{\nabla \bfxi}_{p(\cdot)} \le 1}
    \skp{q}{\divo \bfxi},
  \end{align*}
  where the constants depend only on $p$ and the John constant
  of~$G$.
\end{lemma}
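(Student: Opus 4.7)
The plan is to prove the $\inf$-$\sup$ condition via duality with a Bogovskii-type right inverse to the divergence. The heart of the matter is constructing, for each $g \in L^{p(\cdot)}_0(G)$, a vector field $\bfxi \in (W^{1,p(\cdot)}_0(G))^n$ satisfying $\divergence \bfxi = g$ with $\|\nabla \bfxi\|_{p(\cdot)} \le c\,\|g\|_{p(\cdot)}$. Once such an operator is available, the inf-sup estimate is an essentially formal duality argument.

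First I would invoke the existence of a Bogovskii operator $\Bog \colon L^{p(\cdot)}_0(G) \to (W^{1,p(\cdot)}_0(G))^n$ on John domains satisfying
\begin{align*}
  \divergence(\Bog g) = g \qquad\text{and}\qquad \|\nabla \Bog g\|_{p(\cdot)} \le c\,\|g\|_{p(\cdot)},
\end{align*}
with $c$ depending only on $p^-$, $p^+$, $c_{\log}(p)$, and the John constant of $G$. On a domain star-shaped with respect to a ball, $\Bog$ admits an integral representation whose gradient is a Calder\'on-Zygmund-type singular integral plus a regular remainder; its boundedness on $L^{p(\cdot)}$ reduces, thanks to the log-H\"older assumption, to standard CZ-theory in variable Lebesgue spaces. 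The extension from star-shaped to general John domains is carried out by a Whitney-type decomposition, and the full construction is contained in~\cite{DiHaHaRu}, Chap.~14 (and Diening-R\r{u}\v{z}i\v{c}ka-Schumacher for related settings).

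Given the operator $\Bog$, the claim follows by duality. Fix $q \in L^{p'(\cdot)}_0(G)$ with $\norm{q}_{p'(\cdot)} > 0$. By the norm-conjugate characterization of $L^{p'(\cdot)}$, there exists $g \in L^{p(\cdot)}(G)$ with $\norm{g}_{p(\cdot)} \le 1$ and $\int_G qg\,dx \ge c\,\norm{q}_{p'(\cdot)}$. Replacing $g$ by $\tilde g := g - \mean{g}_G$ leaves the integral against $q$ unchanged, since $\int_G q\,dx = 0$, and inflates the $L^{p(\cdot)}$-norm by at most a factor depending on $p$ and $|G|$. Set $\bfxi := \Bog \tilde g \in (W^{1,p(\cdot)}_0(G))^n$, so that $\divergence \bfxi = \tilde g$ and $\norm{\nabla \bfxi}_{p(\cdot)} \le c$. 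Then
\begin{align*}
  c\,\norm{q}_{p'(\cdot)} \le \int_G q\,\tilde g\,dx = \skp{q}{\divergence \bfxi} \le \norm{\nabla \bfxi}_{p(\cdot)} \sup_{\bfeta\,:\,\norm{\nabla \bfeta}_{p(\cdot)} \le 1} \skp{q}{\divergence \bfeta},
\end{align*}
and dividing by $\norm{\nabla \bfxi}_{p(\cdot)}$ gives the desired inequality.

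The main obstacle is entirely absorbed into the Bogovskii step: globalizing the construction from a ball to a John domain and verifying that the ensuing singular integral is bounded on $L^{p(\cdot)}$ are the nontrivial ingredients, and both require the log-H\"older hypothesis on~$p$. Once $\Bog$ is in hand, the remaining duality calculation is short and yields constants of exactly the dependence stated.
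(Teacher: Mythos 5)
Your proof is correct and takes the same route as the cited source. The paper does not prove Lemma~\ref{lem:ism} itself but refers to~\cite{DiHaHaRu}, Thm.~14.3.18, which is established exactly as you describe---a variable-exponent Bogovskii operator on John domains combined with the norm-conjugate duality computation---and the paper's own remark following Lemma~\ref{lem:ismd} confirms this, noting that the inf-sup condition rests on the negative-norm theorem, i.e.\ on the $L^{p(\cdot)}$-continuity of the gradient of the Bogovskii operator.
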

An appropriate discrete version reads as follows.
\begin{lemma}
  \label{lem:ismd}
  Let $G \subset \setR^n$ be a polyhedral domain, let $p \in \PPln(G)$
  with $1<p^- \leq p^+ < \infty$ and let $\Pidiv$ satisfy
  Assumption~\ref{ass:proj-div}. Then for all $q_h \in Q_h$ holds
  \begin{align*}
    \norm{q_h}_{p_\mathcal T'(\cdot)} &\le c\, \sup_{\bfxi_h \in
      V_h\,:\,\norm{\bfxi_h}_{1,p_\mathcal T} \le 1} \skp{q_h}{\divo \bfxi_h},
  \end{align*}
  where the constants depend \footnote{More precisely, on $p$ and the
    John constant of $G$.} only on $p$ and on $G$.
\end{lemma}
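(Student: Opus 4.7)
The plan is to use a Fortin-type argument combined with the continuous inf-sup condition from Lemma~\ref{lem:ism} and the divergence preservation property of $\Pidiv$. Since $Q_h \subset \frP_k(\mathcal{T})$, Lemma~\ref{lems:ppT} (applied to the exponent $p'$) gives the equivalence
\begin{align*}
\norm{q_h}_{p'_\mathcal{T}(\cdot)} \sim \norm{q_h}_{p'(\cdot)},
\end{align*}
so it suffices to control the continuous norm on the left-hand side. The continuous inf-sup condition (Lemma~\ref{lem:ism}, using that polyhedral domains are John domains) then yields
\begin{align*}
\norm{q_h}_{p'(\cdot)} \leq c \sup_{\bfxi \in W^{1,\px}_0(G),\ \norm{\nabla \bfxi}_{p(\cdot)} \leq 1} \skp{q_h}{\divo \bfxi}.
\end{align*}

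Next I would Fortin-project: for such a test function $\bfxi$, set $\bfxi_h := \Pidiv \bfxi \in V_h$. The divergence-preserving property~\eqref{eq:div_preserving} applies here because $q_h \in Q_h \subset Y_h$, giving the crucial identity
\begin{align*}
\skp{q_h}{\divo \bfxi} = \skp{q_h}{\divo \Pidiv \bfxi} = \skp{q_h}{\divo \bfxi_h}.
\end{align*}
It then remains to show that $\norm{\bfxi_h}_{1,p_\mathcal{T}(\cdot)} \leq c$ whenever $\norm{\nabla \bfxi}_{p(\cdot)} \leq 1$. Since $\bfxi_h$ lives in the discrete space, Lemma~\ref{lems:ppT} again gives equivalence of $\norm{\bfxi_h}_{1,p_\mathcal{T}(\cdot)}$ with $\norm{\bfxi_h}_{1,p(\cdot)}$; and the Orlicz-continuity of $\Pidiv$ from Theorem~\ref{thm:ocont}(b), applied with $a=0$, provides the modular bound on $\norm{\nabla \Pidiv \bfxi}_{p(\cdot)}$ in terms of $\norm{\nabla \bfxi}_{p(\cdot)}$ (together with zero boundary values giving the Poincar\'e-type control of the full $W^{1,\px}$-norm via Korn's inequality in the symmetric-gradient variant already used elsewhere in the paper, or directly since $\Pidiv$ preserves $V$). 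Combining these steps and taking the supremum over admissible $\bfxi$ produces the stated discrete inf-sup inequality.

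The main obstacle I anticipate is the bookkeeping of the additive $h_K^m$ error terms that appear in Theorem~\ref{thm:ocont}, which are artifacts of the variable-exponent Jensen-type inequality (Theorem~\ref{thm:jensenpxshift}). To absorb these one needs to verify the hypothesis $a + \dashint_Q |\nabla \bfxi|\,dy \leq |Q|^{-m}$ for the (normalized) test function, which requires a scaling argument: given an arbitrary $\bfxi$ with $\norm{\nabla \bfxi}_{p(\cdot)} \leq 1$, one rescales so that the condition is met on each simplex, applies the modular continuity estimate, and then rescales back. A second subtlety is that the continuous inf-sup condition bounds $\norm{q_h}_{p'(\cdot)}$ only modulo constants, which is automatic here since $q_h \in Q_h = Y_h \cap L^{p'(\cdot)}_0(\Omega)$ already has zero mean. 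Once these technicalities are handled, the argument is entirely parallel to the constant-exponent Fortin argument.
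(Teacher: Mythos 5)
Your proof follows the same route as the paper's: start from the continuous inf-sup condition of Lemma~\ref{lem:ism}, use the divergence-preservation~\eqref{eq:div_preserving} to replace $\bfxi$ by $\Pidiv\bfxi$, invoke the Orlicz-continuity of $\Pidiv$ (Theorem~\ref{thm:ocont}(b)) to control $\norm{\nabla\Pidiv\bfxi}_{\px}$, and finally switch between $\px$ and $p_{\mathcal T}(\cdot)$ on the discrete space via Lemma~\ref{lems:ppT}. The paper's proof is terser and does not spell out the handling of the $h_K^m$ remainder terms, which you correctly flag as the one point requiring care.
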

\begin{remark}
  Note that the inf-sup condition from Lemma~\ref{lem:ismd} only holds on
  the finite element space $Q_h$. It is not possible to extend it to
  the whole space $L^{p_\mathcal T}(G)$. This is due to the fact that the
  exponent $p_{\mathcal T}$ is not continuous. Log-H\"older
  continuity is a necessary assumption for continuity of singular
  integrals and the maximal function on generalized Lebesgue spaces
  (see~\cite{PiRu}). The inf-sup condition is based on the negative
  norm theorem which follows from the continuity of (the gradient of)
  the \Bogovskii -operator.
\end{remark}
\begin{proof}[of Lemma~\ref{lem:ismd}.]
  We use Lemma~\ref{lem:ism}, Assumption~\ref{ass:proj-div}, and
  Theorem~\ref{thm:ocont} to get
  \begin{align*}
    \norm{q_h}_{ Q_h } &\le c\, \sup_{\norm{\bfxi}_{ V} \le 1}
    \skp{q_h}{\divo \bfxi} = c\, \sup_{\norm{\bfxi}_{V} \le 1}
    \skp{q_h}{\divo \Pidiv \bfxi}
    \\
    &\le c\, \sup_{\norm{\Pidiv \bfxi}_{V_h} \le 1} \skp{q_h}{\divo
      \Pidiv \bfxi} \le c\, \sup_{\norm{\bfxi_h}_{ V_h} \le 1}
    \skp{q_h}{\divo \bfxi_h}.
  \end{align*}
Due to Lemma~\ref{lems:ppT} this is equivalent to the claim.
\qed
\end{proof}
\subsection{Error estimate for the pressure}
We now derive a best approximation result for the numerical error of
the pressure. 
\begin{lemma}
  \label{lem:errpress}
  Let $\Pidiv$ satisfy Assumption~\ref{ass:proj-div}. Let $(\bfv,q)$
  and $(\bfv_h,q_h)$ be solutions of the problems \problemQ{} and
  \problemQh{}, respectively. Suppose $p\in
  C^{0,\alpha}(\overline{\Omega})$ with $p^->1$. Then, we have the
  following estimate
  \begin{align*}
    \|q \!-\!q_h\|_{p_\mathcal T'(\cdot)} & \le c
    \|\bfST(\cdot,\bD\bv) \!-\!\bfST(\cdot,\bD\bv_h)\|_{p_\mathcal
      T'(\cdot)} + c\! \inf _{\mu _h \in Q_h}\|q
    \!-\!\mu_h\|_{p_\mathcal T'(\cdot)} \,+c\,h^\alpha.
  \end{align*}
\end{lemma}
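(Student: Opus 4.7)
The strategy combines a triangle inequality, the discrete variable-exponent inf-sup condition from Lemma~\ref{lem:ismd}, and the numerical error equation~\eqref{eq:err_Q}. For an arbitrary $\mu_h \in Q_h$ I would first split
\begin{equation*}
  \|q - q_h\|_{p_\mathcal T'(\cdot)} \leq \|q - \mu_h\|_{p_\mathcal T'(\cdot)} + \|\mu_h - q_h\|_{p_\mathcal T'(\cdot)},
\end{equation*}
and then apply Lemma~\ref{lem:ismd} to the genuinely discrete object $\mu_h - q_h \in Q_h$, obtaining
\begin{equation*}
  \|\mu_h - q_h\|_{p_\mathcal T'(\cdot)} \leq c \sup_{\|\bfxi_h\|_{1,p_\mathcal T}\leq 1} \skp{\mu_h - q_h}{\divo \bfxi_h}.
\end{equation*}

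Next, writing $\mu_h - q_h = (\mu_h - q) + (q - q_h)$ and using the error identity~\eqref{eq:err_Q} to replace $\skp{q - q_h}{\divo \bfxi_h}$ by $\skp{\bfS(\cdot,\bfD\bfv) - \bfST(\cdot,\bfD\bfv_h)}{\bfD \bfxi_h}$, I would further decompose
\begin{equation*}
  \bfS(\cdot,\bfD\bfv) - \bfST(\cdot,\bfD\bfv_h) = \bigl(\bfS(\cdot,\bfD\bfv) - \bfST(\cdot,\bfD\bfv)\bigr) + \bigl(\bfST(\cdot,\bfD\bfv) - \bfST(\cdot,\bfD\bfv_h)\bigr).
\end{equation*}
H\"older's inequality in the variable-exponent Lebesgue spaces applied to the three resulting pairings against $\bfD \bfxi_h$ or $\divo \bfxi_h$ produces exactly the three contributions of the claim: the first yields $\|q - \mu_h\|_{p_\mathcal T'(\cdot)}$, the second the $h^\alpha$-remainder (see below), and the third the stress-difference norm. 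The $L^{p_\mathcal T(\cdot)}$-norms of $\bfD \bfxi_h$ and $\divo \bfxi_h$ are controlled by the normalisation $\|\bfxi_h\|_{1,p_\mathcal T} \leq 1$.

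The $h^\alpha$-term stems from the H\"older continuity of $p$ through the pointwise bound already exploited in the proof of Lemma~\ref{lem:appr1}, namely
\begin{equation*}
  |\bfS(x,\bfQ) - \bfST(x,\bfQ)| \leq c\, h^\alpha\, |\ln(\kappa+|\bfQ|)|\, \bigl((\kappa + |\bfQ|)^{p_\mathcal T(x)-2} + (\kappa + |\bfQ|)^{p(x)-2}\bigr)|\bfQ|.
\end{equation*}
Taking the $L^{p_\mathcal T'(\cdot)}$-norm of the right-hand side and absorbing the logarithm at the expense of a small increase of the exponent $p(\cdot) \to p(\cdot)s$ (exactly as in Lemma~\ref{lem:appr1}) delivers a bound by $c h^\alpha$. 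Finally, taking the infimum over $\mu_h \in Q_h$ completes the proof. The main technical obstacle I anticipate is ensuring that the inf-sup condition is applied with the modified exponent $p_\mathcal T$ rather than $p$, so that the three H\"older pairings truly balance; this is precisely why Lemma~\ref{lem:ismd} was formulated in the $p_\mathcal T$-setting. The absorption of the logarithmic factor uniformly in $x$ also requires the variable-exponent machinery, in particular Lemma~\ref{lem:pxpy}, to pass between $p(\cdot)$ and $p_\mathcal T(\cdot)$ without losing the factor $h^\alpha$.
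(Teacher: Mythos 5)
Your proposal is correct and follows essentially the same route as the paper: the triangle inequality, the discrete inf-sup condition of Lemma~\ref{lem:ismd} applied to $\mu_h-q_h$, the error equation~\eqref{eq:err_Q} to trade $\skp{q-q_h}{\divo\bfxi_h}$ for a stress pairing, the three-way split of $\bfS(\cdot,\bfD\bfv)-\bfST(\cdot,\bfD\bfv_h)$, H\"older's inequality in $L^{p_\mathcal T(\cdot)}$--$L^{p_\mathcal T'(\cdot)}$, and finally the $h^\alpha$-bound for $\|\bfS(\cdot,\bfD\bfv)-\bfST(\cdot,\bfD\bfv)\|_{p_\mathcal T'(\cdot)}$ obtained by absorbing the logarithm into a slight raise $p(\cdot)\to p(\cdot)s$ of the exponent. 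The only thing the paper makes a touch more explicit is verifying the modular bound $\int_\Omega\bigl(|\bfS(\cdot,\bfD\bfv)-\bfST(\cdot,\bfD\bfv)|/(Ch^\alpha)\bigr)^{p_\mathcal T'(\cdot)}\,dx\le1$ for a suitable constant $C$, which is exactly what your outline achieves.
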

\begin{proof}
  We split the error $q-q_h$ into a best approximation error $q-\mu_h$
  and the remaining part $\mu_h-q_h$, which we will control by means
  of the equation for $q_h$. In particular, for all $\mu_h \in Q_h$ it
  holds
  \begin{align*}
    \|q \!-\!q_h\|_{p_\mathcal T'(\cdot)} &\leq c\, \|q
    \!-\!\mu_h\|_{p_\mathcal T'(\cdot)} + c\, \|\mu_h
    \!-\!q_h\|_{p_\mathcal T'(\cdot)},
  \end{align*}
  by the triangle inequality.  The second term is estimated with the
  help of Lemma~\ref{lem:ismd} as follows
  \begin{align*}
     \|\mu_h \!-\!q_h\|_{p_\mathcal T'(\cdot)} \le  \sup_{\bfxi_h \in
      V_h\,:\,\norm{\bfxi_h}_{1,p_\mathcal T(\cdot)} \le 1} \skp{\mu_h-q_h}{\divo \bfxi_h} .
  \end{align*}
  Let us take a closer look at the term $\skp{\mu_h-q_h}{\divo
    \bfxi_h}$. By using the equation~\eqref{eq:err_Q} for the error,
  we get
  \begin{align*}
    \skp{\mu_h-q_h}{\divo \bfxi_h} & = \skp{\mu_h-q}{\divergence
      \bfxi_h} + \skp{q-q_h}{\divo \bfxi_h}
    \\
    &= \skp{\mu_h-q}{\divo \bfxi_h} + \skp{\bfS(\cdot,\bfD
      \bfv)-\bfS_{\mathcal T}(\cdot,\bfD \bfv_h)}{\bfD \bfxi_h}
    \\
    &= \skp{\mu_h-q}{\divo \bfxi_h} + \skp{\bfST(\cdot,\bfD
      \bfv)-\bfST(\cdot,\bfD \bfv_h)}{\bfD\bfxi_h}
    \\
    &+ \skp{\bfS(\cdot,\bfD \bfv)-\bfS_{\mathcal T}(\cdot,\bfD
      \bfv)}{\bfD \bfxi_h}.
\end{align*}
Applying H\"older's inequality and taking the supremum with respect to
$\bfxi$ yields
\begin{align*} 
  \|\mu_h \!-\!q_h\|_{p_\mathcal T'(\cdot)} &\leq c\, \|q
  \!-\!\mu_h\|_{p_\mathcal T'(\cdot)}+c\, \|\bfST(\cdot,\bD\bv)
  \!-\!\bfST(\cdot,\bD\bv_h)\|_{p_\mathcal T'(\cdot)}
  \\
  &+c \|\bS(\cdot,\bD\bv) \!-\!\bS_{\mathcal
    T}(\cdot,\bD\bv)\|_{p_\mathcal T'(\cdot)}.
  \end{align*}
The last term can be estimated as follows: we have for some $s_1\in(1,s)$
  \begin{align*}
    \int_\Omega &\Big(\frac{\abs{\bfS(\cdot,\bfD \bfv)-\bfS_{\mathcal
          T}(\bfD \bfv)}}{C\,h^\alpha}\Big)^{p_\mathcal T'(\cdot)}\, dx
    \\
    &\qquad\qquad=\sum_{K\in\mathcal T}\int_K
    \Big(\frac{\abs{\bfS(x,\bfD \bfv)-\bfS(x_K,\bfD
        \bfv_h)}}{C\,h^\alpha}\Big)^{p'(x_K)}\, dx
    \\
    &\qquad\qquad\leq \sum_{K\in\mathcal T}\int_K
    \Big(\frac{\log(1+|\bfD\bfv|)(1+|\bfD\bfv_h|)^{p(x_K)-1}}{C_1}\Big)^{p'(x_K)}\,
    dx
    \\
    &\qquad\qquad\leq \sum_{K\in\mathcal T}\int_K
    \Big(\frac{(1+|\bfD\bfv|)^{s_1(p(x_K)-1)}}{C_2}\Big)^{p'(x_K)}\,
    dx.
\end{align*}
Here we took into account $p\in C^{0,\alpha}(\overline{\Omega})$. An
appropriate choice of $C$ (depending on $s_1$ and $s$) implies that, for 
small enough $h>0$,
\begin{align*}
  \int_\Omega &\Big(\frac{\abs{\bfS(\cdot,\bfD \bfv)-\bfS_{\mathcal
        T}(\bfD \bfv)}}{C\,h^\alpha}\Big)^{p'(\cdot)}\, dx\leq
  \frac{1}{C_3}\sum_{K\in\mathcal T}\int_K
  \big(1+|\bfD\bfv|\big)^{p(x)s}\, dx
  \\
  &\qquad\qquad= \frac{1}{C_3}\int_\Omega
  \big(1+|\bfD\bfv|\big)^{p(x)s}\, dx\leq 1.
  \end{align*}
The claim follows, since $\mu_h \in Q_h$ was arbitrary.
\qed
\end{proof}
Unfortunately, the estimate for the error of the pressure ${q-q_h}$
involves the error of the stresses $\bfST(\cdot,\bfD \bfv) -
\bfST(\cdot,\bfD\bfv_h)$. Our error estimates for the velocity in
Theorem~\ref{thm:appr2} are however expressed in terms of
$\bfF_{\mathcal T}(\cdot,\bfD \bfv) - \bfF_{\mathcal T}(\cdot,\bfD
\bfv_h)$.  The following lemma represents the missing link between the
error in terms of $\bfST$ and the error in terms of~$\bfFT$
(with an additional term with respect to the estimate
for fixed $p$).
\begin{lemma}
  \label{lem:errSvsF}
Under the assumptions of Corollary~\ref{cor:vel} It holds
\begin{align}
   \label{eq:errpge2}
       \int_\Omega &\abs{\bfST(\cdot,\bfD \bfv)-\bfST( \cdot,\bfD
         \bfv_h)}^{p'(\cdot)}\, dx \leq
       \,c\,\big(h^{\min\set{\frac{((p^+)')^2}{2},(p^+)'}}+h^{\alpha\min\set{2,(p^+)'}}\big).
\end{align}
\end{lemma}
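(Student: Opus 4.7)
My plan is to deduce the desired modular estimate for $\bfST$ from the $L^2$-modular estimate for $\bfFT$ supplied by Corollary~\ref{cor:vel}, using the classical pointwise comparison between $\bfS$- and $\bfF$-differences available whenever the exponent is constant. Since $\bfST$ and $\bfFT$ are piecewise defined with constant exponent $p_K := p(x_K) = p_K^-$ on each simplex $K$, this pointwise comparison can be invoked locally, and the only extra work compared to the constant-exponent case treated in~\cite{BeBeDiRu} is the passage from the cell-constant exponent $p_K'$ to the variable $p'(x)$, which must be done carefully in order not to spoil the rate.

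Concretely, write $a(x) := \kappa + |\bfD\bfv(x)| + |\bfD\bfv_h(x)|$ and split $\mathcal{T} = \mathcal{T}^- \cup \mathcal{T}^+$ according to whether $p_K < 2$ or $p_K \geq 2$. On each cell the hammer lemma (Lemma~\ref{lem:hammer} with constant exponent $p_K$) gives
\begin{align*}
  |\bfS(x_K,\bfD\bfv) - \bfS(x_K,\bfD\bfv_h)| \, |\bfD\bfv - \bfD\bfv_h| \leq c \, |\bfF(x_K,\bfD\bfv) - \bfF(x_K,\bfD\bfv_h)|^2.
\end{align*}
For $K \in \mathcal{T}^-$, a direct computation shows that the pointwise ratio $|\bfS|^{p_K'}/|\bfF|^2$ equals $\big(|\bfD\bfv - \bfD\bfv_h|/a\big)^{(2-p_K)/(p_K-1)} \leq 1$, hence $|\bfS|^{p_K'} \leq c |\bfF|^2$ pointwise. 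For $K \in \mathcal{T}^+$ the same hammer inequality, combined with $|\bfD\bfv - \bfD\bfv_h| \leq a$ and $p_K \geq 2$, yields $|\bfS|^2 \leq c \, a^{p_K - 2} |\bfF|^2$, and raising to the power $p_K'/2 \leq 1$ gives $|\bfS|^{p_K'} \leq c \, a^{(p_K-2)p_K'/2} |\bfF|^{p_K'}$; applying H\"older's inequality on $K$ with conjugate exponents $2/p_K'$ and $2/(2-p_K')$ and using the algebraic identity $(p_K - 2) \cdot \tfrac{p_K'}{2} \cdot \tfrac{2}{2 - p_K'} = p_K$ produces
\begin{align*}
  \int_K |\bfS|^{p_K'}\, dx \leq c \Big( \int_K a^{p_K}\, dx \Big)^{(2 - p_K')/2} \Big( \int_K |\bfF|^2\, dx \Big)^{p_K'/2}.
\end{align*}
The passage from $p_K'$ to $p'(x)$ is then handled via the H\"older continuity $|p'(x) - p_K'| \leq c h^\alpha$ and Lemma~\ref{lem:pxpy}, which costs at most a bounded multiplicative factor plus an additive $h^\alpha$-term. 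Summing over cells, using the a~priori bound $\int_\Omega a^{p(\cdot)}\, dx \leq c$ (from the hypotheses on $\bfv$, Sobolev embedding applied to $\bfF(\cdot,\bfD\bfv)\in W^{1,2}$, and the uniform estimate on $\bfv_h$) together with Corollary~\ref{cor:vel} in the form $\int_\Omega |\bfF|^2\, dx \leq c(h^{\min\{(p^+)',2\}} + h^{2\alpha})$, the $\mathcal{T}^+$-contribution is bounded by $c(h^{((p^+)')^2/2} + h^{\alpha(p^+)'})$ and the $\mathcal{T}^-$-contribution by $c(h^{(p^+)'} + h^{2\alpha})$, which combine to the claimed $c(h^{\min\{((p^+)')^2/2, (p^+)'\}} + h^{\alpha \min\{2,(p^+)'\}})$.

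The main obstacle will be the rigorous passage from the cell-constant exponent $p_K'$ to the variable $p'(x)$ in a modular sense: the naive bound $|\bfS|^{p'(x)} \leq 1 + |\bfS|^{p_K'}$ would introduce an unacceptable $|\Omega|$-term, so one has to apply Lemma~\ref{lem:pxpy} with a careful admissible polynomial range for $|\bfS|$. This range is secured by combining the a~priori integrability of $\bfD\bfv$ with the finite-element inverse estimate bounding $|\bfD\bfv_h|$ by a negative power of $h$ (Lemma~\ref{lems:ppT}), so that $|\bfS|$ lies in $[h^{m}, h^{-m}]$ for some $m$ depending only on $p^-, p^+, n$. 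A secondary point is the bookkeeping of the two independent sources of $h^\alpha$: the $\bfS$-vs-$\bfST$ consistency error already baked into Corollary~\ref{cor:vel}, and the one produced by the exponent replacement, both of which must be propagated in parallel.
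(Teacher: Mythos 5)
Your proposal is correct and follows essentially the same route as the paper: the same splitting of $\mathcal T$ according to $p_K\lessgtr 2$, the same pointwise comparison of $\abs{\bfST(\cdot,\bfD\bfv)-\bfST(\cdot,\bfD\bfv_h)}$ against $\abs{\bfFT(\cdot,\bfD\bfv)-\bfFT(\cdot,\bfD\bfv_h)}$ via the local hammer lemma, and for $p_K>2$ the same interpolation between the $\bfF$-difference and the a priori bound on $a^{p_K}$. The only presentational difference is that you invoke H\"older directly where the paper uses Young's inequality with a free parameter $\gamma$ and then optimizes over $\gamma$, which yields the identical inequality; your additional remarks about the $p_K'\to p'(x)$ exponent swap and the admissible range for $\abs{\bfS}$ correctly flag a point that the paper handles rather tersely.
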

\begin{proof}
By standard arguments we gain
\begin{align}
  \label{eq:2606}
\begin{aligned}
  \bfST(x,\bfD \bfv)-\bfST(x,\bfD\bfv_h)&=\int_0^1
  D\bfST(x,\bfD\bfv+t(\bfD\bfv_h-\bfD\bfv))\,dt:(\bfD\bfv-\bfD\bfv_h)
  \\
  &\leq \,c\,\int_0^1
  (\kappa+|\bfD\bfv+t(\bfD\bfv_h-\bfD\bfv)|)^{p_{\mathcal
      T}(x)-2}\,dt\quad|\bfD\bfv-\bfD\bfv_h|
  \\
  &\leq \,c\, (\kappa+|\bfD\bfv|+|\bfD\bfv_h-\bfD\bfv|)^{p_{\mathcal
      T}(x)-2}\,|\bfD\bfv-\bfD\bfv_h|.
\end{aligned}
\end{align}
Let us decompose $\mathcal T$ again into $\mathcal T^+$ and $\mathcal
T^-$, where
 \begin{align*}
   \mathcal{T}^+&:=\set{K\in\mathcal T: p_{\mathcal T}> 2},
   \\
   \mathcal{T}^{-}&:=\set{K\in\mathcal T: p_{\mathcal T}\leq 2}.
\end{align*}
It follows for $K\in\mathcal T^{+}$ by Young's inequality for every $\gamma>0$
\begin{align*}
  \int_K &\abs{\bfST(\cdot,\bfD \bfv)-\bfST( \cdot,\bfD
    \bfv_h)}^{p_{\mathcal T}'}\, dx\\&\leq \,c\,\int_K
  (\kappa+|\bfD\bfv|+|\bfD\bfv_h-\bfD\bfv|)^{\frac{p_{\mathcal
        T}-2}{2}p_\mathcal T'}\,|\bfD\bfv-\bfD\bfv_h|^{p_\mathcal
    T'}(1+|\bfD\bfv|+|\bfD\bfv_h|)^{\frac{p_{\mathcal
        T}-2}{2}p_\mathcal T'}\,dx
  \\
  &\leq\,\,c\,\gamma^{-2}\int_K
  (\kappa+|\bfD\bfv|+|\bfD\bfv_h-\bfD\bfv|)^{p_{\mathcal
      T}-2}\,|\bfD\bfv-\bfD\bfv_h|^2\,dx
  \\
  &+\,c\,\gamma^{\frac{2p_{\mathcal T}'}{2-p'_{\mathcal T}}}\int_K
  (1+|\bfD\bfv|+|\bfD\bfv_h|)^{p_{\mathcal T}}\,dx.
\end{align*}
Due to Lemma~\ref{lem:pxpy} and~\ref{eq:v'1} we gain for some $s>1$ as
a consequence of $p\in C^{0,\alpha}(\overline{\Omega})$
\begin{align*}
  \int_K &\abs{\bfST(\cdot,\bfD \bfv)-\bfST( \cdot,\bfD
    \bfv_h)}^{p'_{\mathcal T}}\, dx
  \\
  &\leq\,\,c\,\gamma^{-2}\int_K
  (\kappa+|\bfD\bfv|+|\bfD\bfv_h-\bfD\bfv|)^{p_{\mathcal
      T}(x)-2}\,|\bfD\bfv-\bfD\bfv_h|^2\,dx
  \\
  &+\,c\,\gamma^{\frac{2p_{\mathcal T}'}{2-p'_{\mathcal T}}}\int_K
  (1+|\bfD\bfv|^{\px s}+|\bfD\bfv_h|^{p_\mathcal T(\cdot)})\,dx,
\end{align*}
If $(p^+)'=\min_{K\in \mathcal T^+}\min _K p'<2$ we obtain (in the
other case the following calculations are not necessary because of
$\mathcal T^+=\emptyset$)
\begin{align*}
  \sum_{K\in\mathcal T^+}\int_K &\abs{\bfST(\cdot,\bfD \bfv)-\bfST(
    \cdot,\bfD \bfv_h)}^{p'_{\mathcal T}}\, dx
  \\
  &\leq\,c\gamma^{ \frac{2(p^+)'}{2-(p^+)'}}\,\int_\Omega
  (\kappa+|\bfD\bfv|^{\px s}+|\bfD\bfv_h|^{p_\mathcal
    T(\cdot)})\,dx+\,c\gamma^{-2}\,\int_\Omega \,|\bfF_{\mathcal
    T}(\cdot,\bfD\bfv)-\bfF_{\mathcal T}(\cdot,\bfD\bfv_h)|^{2}\,dx
  \\
  &=:c\Big(\gamma^{ \frac{2(p^+)'}{2-(p^+)'}}A+\gamma^{-2}B\Big).
\end{align*}
We minimize the r.h.s. with respect to $\gamma$ which leads to the optimal choice
\begin{align*}
  \gamma&=\bigg(\frac{2-(p^+)'}{(p^+)'}\bigg)^{\frac{2-(p^+)'}{4}}\bigg(\frac{B}{A
  }\bigg)^{\frac{2-(p^+)'}{4}}\sim
  \bigg(\frac{B}{A}\bigg)^{\frac{2-(p^+)'}{4}}.
\end{align*}  
Note that for $h\ll1$ we can assume that $\gamma\leq1$ as a
consequence of Corollary~\ref{cor:vel}. So we end up with
\begin{align*}
  \sum_{K\in\mathcal T^+}\int_K& \abs{\bfS(\cdot,\bfD \bfv)-\bfS(
    \cdot,\bfD \bfv_h)}^{p'(\cdot)}\, dx
  \\
  \leq&\,c\bigg(\,\int_\Omega (\kappa+|\bfD\bfv|^{\px
    s}+|\bfD\bfv_h|^{p_{\mathcal
      T}(x)})\,dx\bigg)^{\frac{2-(p^+)'}{2}}
  \\
  &\times\bigg(\,\int_\Omega
  \,|\bfFT(\cdot,\bfD\bfv)-\bfFT(\cdot,\bfD\bfv_h)|^{2}\,dx\bigg)^{\frac{(p^+)'}{2}}.
\end{align*}
As a consequence of Corollary~\ref{cor:vel} and $\bfD\bfv_h\in
L^{p_{\mathcal T}}(\Omega)$ uniformly we gain
\begin{align*}
  \sum_{K\in\mathcal T^{+}}\int_K \abs{\bfS(\cdot,\bfD \bfv)-\bfS(
    \cdot,\bfD \bfv_h)}^{p'(\cdot)}\, dx &\leq
  \,c\,\big(h^{\min\set{(p^+)',\frac{((p^+)')^2}{2}}}+h^{(p^+)'\alpha}\big).
\end{align*}
For $K\in\mathcal T^{-}$ we have due to $\bfD\bfv \in L^{\px
  s}(\Omega)$ and $\bfD\bfv_h\in L^{p_{\mathcal T}(\cdot)}(\Omega)$
uniformly in $h$
\begin{align*}
  \int_K &\abs{\bfST(\cdot,\bfD \bfv)-\bfST( \cdot,\bfD
    \bfv_h)}^{p_{\mathcal T}'(\cdot)}\, dx
  \\
  &\leq \,c\,\int_K
  (\kappa+|\bfD\bfv|+|\bfD\bfv_h-\bfD\bfv|)^{p_{\mathcal
      T}(x)-2}\,|\bfD\bfv-\bfD\bfv_h|^2\,dx
  \\
  &\leq\,c\,\int_K
  |\bfFT(\cdot,\bfD\bfv)-\bfFT(\cdot,\bfD\bfv_h)|^{2}\,dx,
\end{align*}
such that
\begin{align*}
  \sum_{K\in\mathcal T^{-}}\int_K &\abs{\bfST(\cdot,\bfD \bfv)-\bfST(
    \cdot,\bfD \bfv_h)}^{p_{\mathcal T}'(\cdot)}\, dx
  \\
  &\leq\,c\,\int_\Omega
  |\bfFT(\cdot,\bfD\bfv)-\bfFT(\cdot,\bfD\bfv_h)|^{2}\,dx
  \\
  &\leq \,c\,\big(h^{\min\set{2,(p^+)'}}+h^{2\alpha}\big).
\end{align*}
Here we used again corollary~\ref{cor:vel}. The claim follows by
combining the estimates for $\mathcal T^+$ and $\mathcal T^-$.
\qed
\end{proof}
Combining Lemma~\ref{lem:errpress} and Lemma~\ref{lem:errSvsF} we get
our desired error estimate for the pressure.
\begin{theorem}
  \label{thm:appr4}
  Let $\Pidiv$ satisfy Assumption~\ref{ass:proj-div}. Let $(\bfv,q)$
  and $(\bfv_h,q_h)$ be solutions of the problems \problemQ{} and
  \problemQh{}, respectively. Assume further that $\bfF(\cdot,\bfD
  \bfv)\in (W^{1,2}(\Omega))^{n\times n}$, $q\in W^{1,p'(\cdot)}$ and
  suppose $p\in C^{0,\alpha}(\overline{\Omega})$ with $p^->1$. Then we
  have for
\begin{equation*}
\alpha_p:=\frac{\alpha}{(p_{-})'}\min\set{2,(p^+)'},\quad
\beta_p:=\frac{1}{(p_{-})'}\min\set{\frac{((p^+)')^2}{2},(p^+)'},
\end{equation*}
the following estimates.
\begin{enumerate}
\item 
    $\|q-q_h\|_{L^{p_\mathcal T'(\cdot)}}\leq \,c\,h^{\min\set{\alpha_p,\beta_p}}$;
  \item $\int_\Omega|q-q_h|^{p_\mathcal
      T'(\cdot)}\,dx\leq\,c\,h^{(p^+)'\min\set{\alpha_p,\beta_p}}$.
\end{enumerate}
\end{theorem}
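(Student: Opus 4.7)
\textbf{Proof plan for Theorem~\ref{thm:appr4}.}
The plan is to combine the abstract error identity for the pressure (Lemma~\ref{lem:errpress}) with the conversion from velocity error to stress error (Lemma~\ref{lem:errSvsF}) and a best-approximation bound for $q$ in the variable-exponent norm, and then to translate modular bounds into Luxemburg-norm bounds. First I would invoke Lemma~\ref{lem:errpress} to get
\begin{align*}
  \|q-q_h\|_{p_{\mathcal T}'(\cdot)} \le c\,\|\bfST(\cdot,\bfD\bfv)-\bfST(\cdot,\bfD\bfv_h)\|_{p_{\mathcal T}'(\cdot)}+c\inf_{\mu_h\in Q_h}\|q-\mu_h\|_{p_{\mathcal T}'(\cdot)}+c\,h^{\alpha},
\end{align*}
so that the task reduces to estimating the three contributions on the right-hand side.

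For the stress term I would apply Lemma~\ref{lem:errSvsF}, which produces a bound on the modular $\int_\Omega|\bfST(\cdot,\bfD\bfv)-\bfST(\cdot,\bfD\bfv_h)|^{p'(\cdot)}\,dx$ of order $h^{\min\{((p^+)')^2/2,(p^+)'\}}+h^{\alpha\min\{2,(p^+)'\}}$. The key conversion is the unit-ball property of the Luxemburg norm: whenever $\rho_{p_{\mathcal T}'}(f)\le 1$ one has $\|f\|_{p_{\mathcal T}'(\cdot)}\le \rho_{p_{\mathcal T}'}(f)^{1/(p_{\mathcal T}')^+}$, and since $p_{\mathcal T}'$ attains its supremum $(p^-)'$ (because $p\mapsto p'$ is decreasing), we exchange a modular bound $M$ for a norm bound $M^{1/(p^-)'}$. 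This turns the two terms from Lemma~\ref{lem:errSvsF} exactly into $h^{\beta_p}$ and $h^{\alpha_p}$. For the best-approximation term I would choose $\mu_h=\Pi_h^{Y}q$ and reuse the modular interpolation estimate already derived in the proof of Corollary~\ref{cor:vel}, namely $\int_\Omega|q-\Pi_h^{Y}q|^{p_{\mathcal T}'(\cdot)}\,dx\le c\,h^{(p^+)'}\bigl(1+\int_\Omega|\nabla q|^{p'(\cdot)}\,dx\bigr)+c\,h^{2n+2}$, which after the same modular-to-norm conversion yields $\|q-\Pi_h^Y q\|_{p_{\mathcal T}'(\cdot)}\le c\,h^{(p^+)'/(p^-)'}$.

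Finally I would check that $\beta_p\le(p^+)'/(p^-)'$ and $\alpha_p\le \alpha$, both of which are elementary: the first by the case distinction $(p^+)'\lessgtr 2$ built into the $\min$, and the second because $\min\{2,(p^+)'\}\le(p^+)'\le(p^-)'$. All four contributions $h^{\beta_p},h^{\alpha_p},h^{(p^+)'/(p^-)'},h^{\alpha}$ are then dominated by $h^{\min\{\alpha_p,\beta_p\}}$ for $h\le 1$, which establishes assertion~(a). For~(b) I would invert the unit-ball property in the opposite direction: if $\|f\|_{p_{\mathcal T}'(\cdot)}=\lambda\le 1$ then $\rho_{p_{\mathcal T}'}(f)\le \lambda^{(p_{\mathcal T}')^-}=\lambda^{(p^+)'}$ (since now we take the infimum of $p_{\mathcal T}'$, which equals $(p^+)'$). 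Applying this to assertion~(a) immediately produces $\int_\Omega|q-q_h|^{p_{\mathcal T}'(\cdot)}\,dx\le c\,h^{(p^+)'\min\{\alpha_p,\beta_p\}}$.

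The main obstacle I expect is purely bookkeeping: tracking which exponent sits where in the Luxemburg versus modular conversion, in particular making sure that the supremum of $p_{\mathcal T}'$ gives $(p^-)'$ and its infimum gives $(p^+)'$, and verifying that the best-approximation rate $h^{(p^+)'/(p^-)'}$ is indeed absorbed by $h^{\beta_p}$ in both regimes $(p^+)'\le 2$ and $(p^+)'>2$. No additional analytical machinery beyond Lemmas~\ref{lem:errpress} and~\ref{lem:errSvsF} and the interpolation bound from the proof of Corollary~\ref{cor:vel} is needed.
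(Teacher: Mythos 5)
Your proposal is correct and follows essentially the same route as the paper: split the error via Lemma~\ref{lem:errpress}, estimate the stress term with Lemma~\ref{lem:errSvsF}, use~\eqref{eq:do} with $\mu_h=\Pi_h^Y q$ for the best-approximation term, and pass between modulars and Luxemburg norms using the unit-ball inequalities (with $(p_{\mathcal T}')^+=(p^-)'$ and $(p_{\mathcal T}')^-\ge(p^+)'$). The only cosmetic points are that after summing over $K$ the remainder in the interpolation estimate should read $h^{n+2}$ rather than $h^{2n+2}$ (immaterial, since $(p^+)'\le 2\le n+2$), and that the paper's own treatment of the best-approximation term normalizes by $Ch$ directly to make the modular $\le 1$ (yielding the slightly sharper rate $h$ there), whereas your modular-to-norm conversion gives $h^{(p^+)'/(p^-)'}$ — but since $\beta_p\le (p^+)'/(p^-)'\le 1$ both are absorbed by $h^{\min\{\alpha_p,\beta_p\}}$, so the final estimate is unaffected.
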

\begin{proof}
We choose $\mu_h:=\sum_K\chi_K\dashint_K q\,dx$ and apply~\eqref{eq:do} such that
\begin{align*}
\int_K \Big|\frac{q - \Pi_h^Y q}{C h_K}\Big|^{p_K'}\,dx 
      &\leq \,c\,\int_{S_K}\Big( \abs{\nabla q}^{p'(\cdot)}+1\Big)\,dx  +c\,h_K^{n+2}.
\end{align*}
Since $q\in W^{1,p'(\cdot)}$ we obtain
for $C$ large enough
\begin{align*}
  \int_\Omega \Big|\frac{q-\mu_h}{C\,h_K}\Big|^{p_{\mathcal
      T}'(\cdot)}\,dx&=\sum_K \int_K
  \Big|\frac{q-\mu_h}{C\,h_K}\Big|^{p_\mathcal T'(\cdot)}\,dx\leq
  \frac{1}{C'}\sum_K\bigg( \Big(\int_K |\nabla
  q|^{p'(\cdot)}+1\Big)\,dx+h_K^{n+2}\bigg)
  \\
  &\leq \frac{1}{C'}\bigg(\int_\Omega \Big(|\nabla
  q|^{p'(\cdot)}+1\Big)\,dx+1\bigg)=1,
\end{align*}
such that a) follows from Lemma~\ref{lem:errpress} and~\ref{lem:errSvsF}.
\\
In order to show b) we define
$\varkappa(h):=h^{\min\set{\alpha_p,\beta_p}}$ and estimate
\begin{align*}
  \int_\Omega |q-q_h|^{p_{\mathcal T}'(\cdot)}\,dx&\leq
  \,c\,\varkappa(h)^{(p^+)'}\int_\Omega
  \Big|\frac{q-q_h}{C\,\varkappa(h)}\Big|^{p'(\cdot)}\,dx \leq
  c\,\varkappa(h)^{(p^+)'},
\end{align*}
using a).
\qed
\end{proof}
\appendix
\section{\texorpdfstring{Orlicz spaces}{Orlicz spaces}}
\label{sec:Orlicz spaces}
\noindent
The following definitions and results are standard in the theory of
Orlicz spaces and can for example be found in~\cite{RaoR91}.
A continuous, convex function $\rho\,:\, [0,\infty) \to [0,\infty)$
with $\rho(0)=0$, and $\lim_{t \to \infty} \rho(t) = \infty$ is called
a {\em continuous, convex $\phi$-function}.  

We say that $\phi$ satisfies the $\Delta_2$--condition, if there
exists $c > 0$ such that for all $t \geq 0$ holds $\phi(2t) \leq c\,
\phi(t)$. By $\Delta_2(\phi)$ we denote the smallest such
constant. Since $\phi(t) \leq \phi(2t)$ the $\Delta_2$-condition is
equivalent to $\phi(2t) \sim \phi(t)$ uniformly in $t$. For a family
$\phi_\lambda$ of continuous, convex $\phi$-functions we define
$\Delta_2(\set{\phi_\lambda}) := \sup_\lambda \Delta_2(\phi_\lambda)$.
Note that if $\Delta_2(\phi) < \infty$ then $\phi(t) \sim \phi(c\,t)$
uniformly in $t\geq 0$ for any fixed $c>0$.  By $L^\phi$ and
$W^{k,\phi}$, $k\in \setN_0$, we denote the classical Orlicz and
Orlicz-Sobolev spaces, i.e.\ $f \in L^\phi$ iff $\int
\phi(\abs{f})\,dx < \infty$ and $f \in W^{k,\phi}$ iff $ \nabla^j f
\in L^\phi$, $0\le j\le k$.
\\
A $\phi$-function $\rho$ is called a $N$-function iff it is strictly
increasing and convex with
\begin{align*}
  \lim_{t\rightarrow0}\frac{\rho(t)}{t}=
  \lim_{t\rightarrow\infty}\frac{t}{\rho(t)}=0.
\end{align*}
By $\rho^*$ we denote the conjugate N-function of $\rho$, which is
given by $\rho^*(t) = \sup_{s \geq 0} (st - \rho(s))$. Then $\rho^{**}
= \rho$.
\begin{lemma}[Young's inequality]
  \label{lem:young}
  Let $\rho$ be an N-function. Then for all $s,t\geq 0$ we have
  \begin{align*}
    st \leq \rho(s)+\rho^*(t).
  \end{align*}
  If $\Delta_2(\rho,\rho^*)< \infty$, then additionally for all $\delta>0$
  \begin{align*}
    st &\leq \delta\,\rho(s)+c_\delta\,\rho^*(t),
    \\
    st &\leq c_\delta\,\rho(s)+\delta\,\rho^*(t),
    \\
    \rho'(s)t &\leq \delta\,\rho(s)+c_\delta\,\rho(t),
    \\
    \rho'(s)t &\leq \delta\,\rho(t)+c_\delta\,\rho(s),
  \end{align*}
  where $c_\delta= c(\delta, \Delta_2(\set{\rho,\rho^*}))$.
\end{lemma}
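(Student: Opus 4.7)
The plan is to dispatch the four assertions in sequence, leveraging only the definition of $\rho^*$, convexity, and the $\Delta_2$ hypothesis.

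\textbf{Classical Young.} The bound $st \leq \rho(s) + \rho^*(t)$ is immediate from the definition $\rho^*(t) = \sup_{\sigma \geq 0}(\sigma t - \rho(\sigma)) \geq st - \rho(s)$; no further hypothesis on $\rho$ is needed at this step.

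\textbf{The two $\delta$-versions for $st$.} For the inequality $st \leq \delta \rho(s) + c_\delta \rho^*(t)$ I would apply the classical Young inequality to the rescaled pair $(\alpha s, \alpha^{-1} t)$, with $\alpha \in (0,1]$ to be chosen:
\[
st = (\alpha s)(\alpha^{-1} t) \leq \rho(\alpha s) + \rho^*(\alpha^{-1} t).
\]
Since $\rho(0) = 0$ and $\rho$ is convex, $\rho(\alpha s) \leq \alpha\, \rho(s)$, so the first summand is already bounded by $\alpha \rho(s)$. For the second, I would iterate $\Delta_2(\rho^*)$: picking an integer $k$ with $2^k \geq \alpha^{-1}$ gives $\rho^*(\alpha^{-1} t) \leq \rho^*(2^k t) \leq \Delta_2(\rho^*)^k \rho^*(t)$. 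Setting $\alpha := \delta$ and $k := \lceil \log_2(1/\delta) \rceil$ yields the claim with the explicit constant $c_\delta = \Delta_2(\rho^*)^k$. The mirror inequality $st \leq c_\delta \rho(s) + \delta \rho^*(t)$ is proved identically, now rescaling as $(\alpha^{-1} s, \alpha t)$ and iterating $\Delta_2(\rho)$ in place of $\Delta_2(\rho^*)$.

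\textbf{The inequalities for $\rho'(s)t$.} The key auxiliary estimate is
\[
\rho^*(\rho'(s)) \leq s\, \rho'(s) - \rho(s) \leq s\, \rho'(s) \leq \rho(2s) \leq \Delta_2(\rho)\, \rho(s),
\]
where the first step is the standard Fenchel/Young equality case (valid since $\rho'(s)$ realizes the supremum defining $\rho^*(\rho'(s))$ by convexity and differentiability of $\rho$), and the penultimate step follows from monotonicity of $\rho'$ via $s\,\rho'(s) = \int_0^s \rho'(s)\,d\tau \leq \int_0^{2s} \rho'(\tau)\,d\tau = \rho(2s)$. Now I apply the already-proved $\delta$-Young bound to $a := \rho'(s)$ in the form $at \leq \tilde\delta\, \rho^*(a) + c_{\tilde\delta}\, \rho(t)$; choosing $\tilde\delta := \delta/\Delta_2(\rho)$ and substituting the bound above absorbs $\rho^*(\rho'(s))$ into $\delta\, \rho(s)$, producing $\rho'(s)\, t \leq \delta\, \rho(s) + c_\delta\, \rho(t)$. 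The reversed version $\rho'(s)\, t \leq \delta\, \rho(t) + c_\delta\, \rho(s)$ follows from the mirror $\delta$-Young applied to $a = \rho'(s)$ and the same absorption.

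\textbf{Main obstacle.} Every step is essentially routine once the correct scaling is chosen; the only point requiring care is the bookkeeping of how $c_\delta$ depends on $\delta$ and on $\Delta_2(\{\rho,\rho^*\})$ in the dyadic iteration $\rho^*(\alpha^{-1}t) \leq \Delta_2(\rho^*)^{\lceil \log_2(1/\alpha)\rceil}\rho^*(t)$, since this is what justifies the asserted dependence $c_\delta = c(\delta, \Delta_2(\{\rho,\rho^*\}))$.
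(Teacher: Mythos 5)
The paper itself gives no proof of this lemma; it is quoted from the standard literature (Rao--Ren), so there is nothing internal to compare against. Your argument is correct and is the standard one: classical Young from the definition of $\rho^*$, the $\delta$-versions by rescaling plus convexity ($\rho(\alpha s)\le\alpha\rho(s)$) and a dyadic iteration of the $\Delta_2$-condition, and the $\rho'(s)t$ inequalities via the Fenchel identity $\rho^*(\rho'(s))=s\rho'(s)-\rho(s)$ together with $s\rho'(s)\le\rho(2s)\le\Delta_2(\rho)\rho(s)$. Two cosmetic points: the displayed justification $\int_0^s\rho'(s)\,d\tau\le\int_0^{2s}\rho'(\tau)\,d\tau$ is true but the pointwise comparison that proves it lives on $[s,2s]$, namely $s\rho'(s)=\int_s^{2s}\rho'(s)\,d\tau\le\int_s^{2s}\rho'(\tau)\,d\tau\le\rho(2s)$; and the form $at\le\tilde\delta\,\rho^*(a)+c_{\tilde\delta}\,\rho(t)$ you invoke is exactly your second $\delta$-inequality applied with $\rho^*$ in place of $\rho$ (using $\rho^{**}=\rho$), which is worth saying explicitly since it is where the hypothesis $\Delta_2(\rho)<\infty$, and not just $\Delta_2(\rho^*)<\infty$, enters. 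Neither point is a gap.
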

\begin{definition}
  \label{ass:phipp}
  Let $\rho$ be an N-function. 
  We say that $\rho$ is {\em elliptic}, if
  $\rho$ is $C^1$ on $[0,\infty)$ and $C^2$ on $(0,\infty)$ and
  assume that 
  \begin{align}
    \label{eq:phipp}
    \rho'(t) &\sim t\,\rho''(t),
  \end{align}
  uniformly in $t > 0$. The constants hidden in $\sim$ are called the
  {\em characteristics of~$\rho$}.
\end{definition}
Note that~\eqref{eq:phipp} is stronger than
$\Delta_2(\rho,\rho^*)<\infty$. In fact, the $\Delta_2$-constants can
be estimated in terms of the characteristics of~$\rho$.

Associated to an elliptic $N$-function $\rho$ we define the tensors
\begin{align*}
  \bfA^\rho(\bfxi)&:=\frac{\rho'(\abs{\bfxi})}{\abs{\bfxi}}\bfxi,\quad
  \bfxi\in\mathbb R^{n\times n}
  \\
  \bfF^\rho(\bfxi)&:=\sqrt{\frac{\rho'(\abs{\bfxi})}{\abs{\bfxi}}}\,\bfxi,\quad
  \bfxi\in\mathbb R^{n\times n}.
\end{align*}

We define the {\em shifted} $N$-function $\rho_a$ for $a\geq 0$ by
\begin{align}
  \label{eq:def_shift}
  \rho_a(t) &:= \int_0^t \frac{\rho'(a+\tau)}{a+\tau} \tau\,d\tau.
\end{align}
The following auxiliary result can be found in~\cite{DieE08,DieR07}.
\begin{lemma}
  \label{lem:shift_sim}
  For all $a,b, t \geq 0$ we have
  \begin{align*}
    \rho_a(t) &\sim 
    \begin{cases}
      \rho''(a) t^2 &\qquad\text{if $t \lesssim a$}
      \\
      \rho(t) &\qquad\text{if $t \gtrsim a$,}
    \end{cases}
    \\
    (\rho_a)_b(t)&\sim\rho_{a+b}(t).
  \end{align*}
\end{lemma}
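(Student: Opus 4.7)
The plan is to prove the second equivalence first, since it is actually an identity, and then handle the case distinction for the first equivalence using the ellipticity $\rho'(t)\sim t\,\rho''(t)$ and its consequences.

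\emph{Identity for the double shift.} I would start by computing $\rho_a'(s)$ directly from the definition~\eqref{eq:def_shift}, which gives $\rho_a'(s) = \frac{\rho'(a+s)}{a+s}\,s$. Substituting this into the definition of $(\rho_a)_b$ yields
\begin{align*}
  (\rho_a)_b(t) = \int_0^t \frac{(\rho_a)'(b+\tau)}{b+\tau}\,\tau\,d\tau
  = \int_0^t \frac{\rho'(a+b+\tau)}{(a+b+\tau)(b+\tau)}(b+\tau)\,\tau\,d\tau
  = \rho_{a+b}(t),
\end{align*}
so in fact $(\rho_a)_b(t)=\rho_{a+b}(t)$, which is stronger than $\sim$.

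\emph{Case $t\lesssim a$.} Here I would use that for $0\le\tau\le t\lesssim a$ one has $a+\tau\sim a$. The ellipticity assumption~\eqref{eq:phipp} gives $\frac{\rho'(s)}{s}\sim\rho''(s)$ uniformly in $s>0$, so
\begin{align*}
  \frac{\rho'(a+\tau)}{a+\tau}\sim\rho''(a+\tau).
\end{align*}
The next step is to show $\rho''(a+\tau)\sim\rho''(a)$ on this range. This follows because $s\mapsto \rho'(s)/s\sim \rho''(s)$ and, thanks to ellipticity plus $\Delta_2(\rho,\rho^*)<\infty$, the function $\rho'$ is doubling, hence $\rho'(a+\tau)\sim\rho'(a)$ on $\tau\lesssim a$, and dividing by $a+\tau\sim a$ gives $\rho''(a+\tau)\sim\rho''(a)$. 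Plugging this into the defining integral yields
\begin{align*}
  \rho_a(t)\sim \rho''(a)\int_0^t\tau\,d\tau = \tfrac12\,\rho''(a)\,t^2.
\end{align*}

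\emph{Case $t\gtrsim a$.} I would split $\rho_a(t)=\int_0^a+\int_a^t$. On the first interval the previous case gives a contribution $\sim\rho''(a)\,a^2\sim\rho(a)$ (using once more ellipticity $\rho'(a)\,a\sim\rho''(a)\,a^2$ and the elementary bound $\rho(a)\sim\rho'(a)\,a$ from $\Delta_2$). Since $t\gtrsim a$ and $\rho$ is increasing, this term is $\lesssim\rho(t)$. On the second interval $\tau\in[a,t]$ we have $a+\tau\sim\tau$, so by ellipticity $\frac{\rho'(a+\tau)}{a+\tau}\,\tau\sim \rho'(a+\tau)$, and integrating gives
\begin{align*}
  \int_a^t\frac{\rho'(a+\tau)}{a+\tau}\,\tau\,d\tau\sim \rho(a+t)-\rho(2a)\sim\rho(t),
\end{align*}
where in the last step I use $\rho(a+t)\sim\rho(t)$ (from $a\lesssim t$ and $\Delta_2$) and $\rho(2a)\sim\rho(a)\lesssim\rho(t)$. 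Combining both pieces yields $\rho_a(t)\sim\rho(t)$.

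\emph{Main obstacle.} The only non-routine step is extracting the pointwise equivalences $\rho''(a+\tau)\sim\rho''(a)$ for $\tau\lesssim a$ and $\rho(a+t)\sim\rho(t)$ for $a\lesssim t$ from the elliptic/$\Delta_2$ structure; these are essentially doubling-type estimates that follow from $\rho'(s)\sim s\rho''(s)$ and $\Delta_2(\rho,\rho^*)<\infty$, as recorded in the paragraph following Definition~\ref{ass:phipp}. Everything else is an integration.
\qed
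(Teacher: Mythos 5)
The paper does not prove this lemma at all---it is quoted from \cite{DieE08,DieR07}---so there is no in-paper argument to compare against; I can only assess your proof on its own terms. Most of it is correct: the computation showing that $(\rho_a)_b=\rho_{a+b}$ holds as an exact identity is right (and is indeed special to the definition~\eqref{eq:def_shift}), and the case $t\lesssim a$ is handled correctly, since ellipticity makes $\rho'$ doubling, whence $\rho''(a+\tau)\sim\rho''(a)$ for $0\le\tau\le t\lesssim a$.

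One step fails as written, though. In the case $t\gtrsim a$ you assert $\int_a^t\rho'(a+\tau)\,d\tau=\rho(a+t)-\rho(2a)\sim\rho(t)$, justified by $\rho(a+t)\sim\rho(t)$ and $\rho(2a)\sim\rho(a)\lesssim\rho(t)$. This yields only the upper bound: when $a$ and $t$ are comparable the lower bound collapses (for $a=t$ the integral is exactly $0$ while $\rho(t)>0$), and in general $\rho(a+t)-\rho(2a)\ge c\,\rho(t)-C\,\rho(a)$ is useless once $\rho(a)\sim\rho(t)$. Adding the first piece, which is $\sim\rho(a)$, does not automatically repair this, because the constants $c_1\rho(a)$ gained there need not dominate the $C\rho(a)$ lost here. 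The standard fix is to get the lower bound from the top part of the integral instead: for $t\ge a$,
\begin{align*}
  \rho_a(t)\;\ge\;\int_{t/2}^t\frac{\rho'(a+\tau)}{a+\tau}\,\tau\,d\tau
  \;\ge\;\tfrac13\int_{t/2}^t\rho'(\tau)\,d\tau
  \;=\;\tfrac13\bigl(\rho(t)-\rho(t/2)\bigr)\;\ge\;\tfrac16\,\rho(t),
\end{align*}
using that $\rho'(a+\tau)\ge\rho'(\tau)$, that $a+\tau\le t+\tau\le 3\tau$ for $\tau\ge t/2$, and that $\rho(t/2)\le\tfrac12\rho(t)$ by convexity with $\rho(0)=0$. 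With this replacement (and keeping your argument for the upper bound) the proof is complete.
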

\begin{lemma}[{\cite[Lemma~2.3]{DieE08}}]
  \label{lem:hammer}
  We have
  \begin{align*}
    \begin{aligned}
      \big({\bfA^\rho}(\bfP) - {\bfA^\rho}(\bfQ)\big) \cdot
      \big(\bfP-\bfQ \big) &\sim \bigabs{ \bfF^\rho(\bfP) -
        \bfF^\rho(\bfQ)}^2
      \\
      &\sim \rho_{\abs{\bfP}}(\abs{\bfP - \bfQ})
      \\
      &\sim \rho''\big( \abs{\bfP} + \abs{\bfQ} \big)\abs{\bfP -
        \bfQ}^2,
    \end{aligned}
  \end{align*}
  uniformly in $\bfP, \bfQ \in \setR^{n \times n}$.  Moreover,
  uniformly in $\bfQ \in \setR^{n \times n}$,
  \begin{align*}
    \bfA^\rho(\bfQ) \cdot \bfQ &\sim \abs{\bfF^\rho(\bfQ)}^2\sim
    \rho(\abs{\bfQ})
    \\
    \abs{{\bfA^\rho}(\bfP) -
      {\bfA^\rho}(\bfQ)}&\sim\big(\rho_{\abs{\bfP}}\big)'(\abs{\bfP -
      \bfQ}).
  \end{align*}
  The constants depend only on the characteristics of $\rho$.
\end{lemma}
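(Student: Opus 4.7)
Here is how I would organize the proof. The strategy is to pass everything through a single ``master quantity'', namely $\rho''(\abs{\bfP}+\abs{\bfQ})\abs{\bfP-\bfQ}^2$, showing separately that each of the other expressions is comparable to it.

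\textbf{Step 1: bridging the master quantity to the shifted $N$-function.} I first want to prove
\[
  \rho_{\abs{\bfP}}(\abs{\bfP-\bfQ}) \;\sim\; \rho''(\abs{\bfP}+\abs{\bfQ})\,\abs{\bfP-\bfQ}^2
\]
by a two-case analysis, relying on Lemma~\ref{lem:shift_sim}. If $\abs{\bfP-\bfQ}\lesssim\abs{\bfP}$ then $\abs{\bfQ}\leq\abs{\bfP}+\abs{\bfP-\bfQ}\lesssim\abs{\bfP}$, so $\abs{\bfP}+\abs{\bfQ}\sim\abs{\bfP}$; the first branch of Lemma~\ref{lem:shift_sim} gives $\rho_{\abs{\bfP}}(\abs{\bfP-\bfQ})\sim \rho''(\abs{\bfP})\abs{\bfP-\bfQ}^2$, and since ellipticity implies the $\Delta_2$-property for $\rho'$ (and hence a slow variation of $\rho''$ on dyadic scales), $\rho''(\abs{\bfP})\sim \rho''(\abs{\bfP}+\abs{\bfQ})$. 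If instead $\abs{\bfP-\bfQ}\gtrsim\abs{\bfP}$, then $\abs{\bfP}+\abs{\bfQ}\sim\abs{\bfP-\bfQ}$, the second branch gives $\rho_{\abs{\bfP}}(\abs{\bfP-\bfQ})\sim\rho(\abs{\bfP-\bfQ})$, and ellipticity (via $\rho(t)\sim t\rho'(t)\sim t^2\rho''(t)$) makes this comparable to $\rho''(\abs{\bfP}+\abs{\bfQ})\abs{\bfP-\bfQ}^2$.

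\textbf{Step 2: the two differential identities.} For $\bfF^\rho$, I would compute $D\bfF^\rho(\bfxi)$ directly from $\bfF^\rho(\bfxi)=\sqrt{\rho'(\abs{\bfxi})/\abs{\bfxi}}\,\bfxi$ and check, using $\rho'(s)\sim s\rho''(s)$, that for every $\bfH\in\Rnn$
\[
  \bigabs{D\bfF^\rho(\bfxi)\bfH}^2 \;\sim\; \rho''(\abs{\bfxi})\,\abs{\bfH}^2.
\]
Analogously, differentiating $\bfA^\rho(\bfxi)=(\rho'(\abs{\bfxi})/\abs{\bfxi})\bfxi$ yields a symmetric tensor $D\bfA^\rho(\bfxi)$ whose eigenvalues are $\rho''(\abs{\bfxi})$ and $\rho'(\abs{\bfxi})/\abs{\bfxi}$, both equivalent to $\rho''(\abs{\bfxi})$ by ellipticity; hence
\[
  D\bfA^\rho(\bfxi)\bfH:\bfH \;\sim\; \rho''(\abs{\bfxi})\,\abs{\bfH}^2, \qquad \abs{D\bfA^\rho(\bfxi)}\;\sim\;\rho''(\abs{\bfxi}).
\]

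\textbf{Step 3: integration along the segment.} Writing
\[
  \bfF^\rho(\bfP)-\bfF^\rho(\bfQ)=\int_0^1 D\bfF^\rho\bigl(\bfQ+s(\bfP-\bfQ)\bigr)(\bfP-\bfQ)\,ds,
\]
and using Step~2, I reduce $\abs{\bfF^\rho(\bfP)-\bfF^\rho(\bfQ)}^2$ to $\abs{\bfP-\bfQ}^2\int_0^1\rho''(\abs{\bfQ+s(\bfP-\bfQ)})\,ds$. Standard estimates (again using that $\rho''$ is almost-monotone up to a constant on dyadic scales via the $\Delta_2$ consequence of ellipticity) show this last integral is comparable to $\rho''(\abs{\bfP}+\abs{\bfQ})$, so Step~1 closes the chain. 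The same strategy, combined with the monotonicity bound $(D\bfA^\rho(\bfxi)\bfH):\bfH\gtrsim\rho''(\abs{\bfxi})\abs{\bfH}^2$, gives $(\bfA^\rho(\bfP)-\bfA^\rho(\bfQ)):(\bfP-\bfQ)\sim \rho''(\abs{\bfP}+\abs{\bfQ})\abs{\bfP-\bfQ}^2$.

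\textbf{Step 4: the ``Moreover'' block and the main obstacle.} The identity $\bfA^\rho(\bfQ)\cdot\bfQ=\rho'(\abs{\bfQ})\abs{\bfQ}$ and $\abs{\bfF^\rho(\bfQ)}^2=\rho'(\abs{\bfQ})\abs{\bfQ}$ are explicit, and both are $\sim\rho(\abs{\bfQ})$ because ellipticity gives $\rho(t)=\int_0^t\rho'(s)\,ds\sim\int_0^t s\rho''(s)\,ds\sim t\rho'(t)$. For the last equivalence, I would take $\bfP=\bfQ+\bfH$ and compute $\bfA^\rho(\bfQ+\bfH)-\bfA^\rho(\bfQ)=\int_0^1 D\bfA^\rho(\bfQ+s\bfH)\bfH\,ds$; Step~2 gives $\abs{\bfA^\rho(\bfP)-\bfA^\rho(\bfQ)}\sim\abs{\bfH}\rho''(\abs{\bfQ}+\abs{\bfH})$, and then one checks from the definition $(\rho_{\abs{\bfP}})'(t)=\rho'(\abs{\bfP}+t)t/(\abs{\bfP}+t)\sim t\rho''(\abs{\bfP}+t)$ that this coincides with $(\rho_{\abs{\bfP}})'(\abs{\bfP-\bfQ})$ up to constants (using $\abs{\bfP}+\abs{\bfP-\bfQ}\sim \abs{\bfP}+\abs{\bfQ}$ by triangle inequality and the same case analysis as in Step~1). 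I expect the main obstacle to be precisely this last point: showing that replacing $\abs{\bfQ}$ by $\abs{\bfP}$ inside $\rho''$ or inside the shift subscript only costs constants depending on the characteristics of $\rho$; this rests on the asymmetry-killing identity $\rho_a(t)\sim\rho_b(t)$ whenever $a+t\sim b+t$, which in turn is a direct consequence of Lemma~\ref{lem:shift_sim} together with the $\Delta_2$-condition.
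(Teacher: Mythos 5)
The paper does not prove Lemma~\ref{lem:hammer} at all: it is quoted verbatim from \cite[Lemma~2.3]{DieE08}, so your sketch can only be measured against the proof in that reference. Your architecture --- routing everything through $\rho''(\abs{\bfP}+\abs{\bfQ})\abs{\bfP-\bfQ}^2$, the two-case analysis via Lemma~\ref{lem:shift_sim} in Step~1, the eigenvalue computation for $D\bfA^\rho$ and $D\bfF^\rho$ in Step~2, and the segment representation in Steps~3--4 --- is exactly the strategy of that reference, and Steps~1, 2 and the explicit identities in Step~4 are correct as written.

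Two points are genuine gaps rather than omitted routine detail. First, the estimate $\int_0^1\rho''(\abs{\bfxi_s})\,ds\sim\rho''(\abs{\bfP}+\abs{\bfQ})$ for $\bfxi_s:=\bfQ+s(\bfP-\bfQ)$ is not a consequence of ``almost-monotonicity on dyadic scales'': $\rho''$ may be decreasing and the segment may pass through or near the origin, where $\rho''$ blows up and $D\bfA^\rho$, $D\bfF^\rho$ are not even defined ($\rho$ is only $C^2$ on $(0,\infty)$). This integral bound is the technical heart of \cite{DieE08} (their Lemma~2.1, the Orlicz analogue of the classical Acerbi--Fusco estimate for $(\kappa+t)^{p-2}$) and must be invoked or reproved, not dismissed as standard. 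Second, Step~3 as phrased only yields upper bounds. The claimed reduction of $\abs{\bfF^\rho(\bfP)-\bfF^\rho(\bfQ)}^2$ to $\abs{\bfP-\bfQ}^2\int_0^1\rho''(\abs{\bfxi_s})\,ds$ is wrong in the lower-bound direction on two counts: (i) a bound from below for $\bigabs{\int_0^1 D\bfF^\rho(\bfxi_s)\bfH\,ds}$ requires killing possible cancellation, which one does by pairing with $\bfH/\abs{\bfH}$ and using that each $D\bfF^\rho(\bfxi_s)$ is symmetric positive definite with both eigenvalues comparable to $\sqrt{\rho''(\abs{\bfxi_s})}$; (ii) what this produces is $\abs{\bfH}\int_0^1\sqrt{\rho''(\abs{\bfxi_s})}\,ds$, whose square is not comparable to $\abs{\bfH}^2\int_0^1\rho''(\abs{\bfxi_s})\,ds$ by Cauchy--Schwarz alone, so the key integral lemma must be applied to $\sqrt{\rho''}$ rather than to $\rho''$. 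The clean repair is to note that $\bfF^\rho=\bfA^\sigma$ for the elliptic N-function $\sigma$ with $\sigma'(t)=\sqrt{t\rho'(t)}$, for which $\sigma''\sim\sqrt{\rho''}$, so the $\bfF^\rho$-equivalences reduce to the $\bfA^\sigma$-equivalences; the same pairing device gives the lower bounds for $\big({\bfA^\rho}(\bfP)-{\bfA^\rho}(\bfQ)\big)\cdot(\bfP-\bfQ)$ (here the integrand is already a nonnegative scalar) and for $\abs{{\bfA^\rho}(\bfP)-{\bfA^\rho}(\bfQ)}$.
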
 
\begin{lemma}[Change of Shift]
  \label{lem:shift_ch}
  Let $\rho$ be an elliptic N-function. Then for each $\delta>0$ there
  exists $C_\delta \geq 1$ (only depending on~$\delta$ and the
  characteristics of~$\rho$) such that
  \begin{align*}
    \rho_{\abs{\bfa}}(t)&\leq C_\delta\, \rho_{\abs{\bfb}}(t)
    +\delta\, \rho_{\abs{\bfa}}(\abs{\bfa - \bfb}),
    \\
    (\rho_{\abs{\bfa}})^*(t)&\leq C_\delta\, (\rho_{\abs{\bfb}})^*(t)
    +\delta\, \rho_{\abs{\bfa}}(\abs{\bfa - \bfb}),
  \end{align*}
  for all $\bfa,\bfb\in\setR^n$ and $t\geq0$.
\end{lemma}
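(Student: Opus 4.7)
The plan is to combine Lemma~\ref{lem:shift_sim}, which gives the representation $\rho_a(t)\sim\rho''(a+t)\,t^2$, with the ellipticity hypothesis $\rho'(s)\sim s\rho''(s)$. Ellipticity forces $\rho''$ to be doubling, $\rho''(2s)\sim\rho''(s)$, and by integration $\rho_a$ is uniformly (in $a\geq 0$) equivalent to an N-function with polynomial growth between fixed exponents $1<p_0\leq p_1<\infty$. In particular one obtains a Matuszewska-type scaling
\begin{align*}
  \rho_a(\theta s)\leq c\,\theta^{p_0}\,\rho_a(s),\qquad 0<\theta\leq 1,\ s\geq 0,
\end{align*}
with an absolute constant. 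This power-type scaling is the mechanism that produces the free $\delta$-factor.

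\textbf{First inequality.} I would argue by a case distinction governed by a threshold $\theta=\theta(\delta)\in(0,1]$, to be fixed at the end. In the \emph{close} case $\abs{\bfa-\bfb}\leq\theta(\abs{\bfa}+\abs{\bfb}+t)$, the triangle inequality gives $\abs{\bfa}+t\sim\abs{\bfb}+t$ with constants depending on $\theta$, so doubling of $\rho''$ yields $\rho''(\abs{\bfa}+t)\sim\rho''(\abs{\bfb}+t)$ and therefore $\rho_{\abs{\bfa}}(t)\leq C_\theta\,\rho_{\abs{\bfb}}(t)$. In the complementary \emph{far} case $\abs{\bfa-\bfb}>\theta(\abs{\bfa}+\abs{\bfb}+t)$ we in particular have $t\leq\theta^{-1}\abs{\bfa-\bfb}$, so the Matuszewska scaling above (after a bounded number of doubling steps to bring the argument below $\abs{\bfa-\bfb}$) gives $\rho_{\abs{\bfa}}(t)\leq c\,\theta^{p_0}\,\rho_{\abs{\bfa}}(\abs{\bfa-\bfb})$; picking $\theta$ with $c\,\theta^{p_0}\leq\delta$ yields the second term on the right, and the first term is absorbed with constant $C_\delta:=C_\theta$.

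\textbf{Dual inequality.} For $(\rho_{\abs{\bfa}})^*$ I would use the equivalence $(\rho_a)^*(t)\sim(\rho^*)_{\rho'(a)}(t)$, uniform in $a\geq 0$, which is itself a consequence of ellipticity together with the Young identity $\rho^*(\rho'(s))\sim\rho(s)$ (one verifies it by applying Lemma~\ref{lem:shift_sim} on both sides and matching the regimes $t\leq\rho'(a)$ and $t\geq\rho'(a)$). Applying the first inequality to the elliptic N-function $\rho^*$ with shifts $\rho'(\abs{\bfa})$ and $\rho'(\abs{\bfb})$ delivers
\begin{align*}
  (\rho_{\abs{\bfa}})^*(t) &\leq C_\delta\,(\rho_{\abs{\bfb}})^*(t) +\delta\,(\rho^*)_{\rho'(\abs{\bfa})}\bigl(\abs{\rho'(\abs{\bfa})-\rho'(\abs{\bfb})}\bigr).
\end{align*}
To collapse the error term, I would invoke Lemma~\ref{lem:hammer} in the scalar case to obtain $\abs{\rho'(\abs{\bfa})-\rho'(\abs{\bfb})}\leq(\rho_{\abs{\bfa}})'(\abs{\bfa-\bfb})$, and then the Young identity for the conjugate pair $(\rho_{\abs{\bfa}},(\rho_{\abs{\bfa}})^*)$ to conclude $(\rho^*)_{\rho'(\abs{\bfa})}\bigl((\rho_{\abs{\bfa}})'(s)\bigr)\sim\rho_{\abs{\bfa}}(s)$, yielding the claim.

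\textbf{Main obstacle.} The key technical point is establishing the equivalence $(\rho_a)^*\sim(\rho^*)_{\rho'(a)}$ with constants \emph{uniform} in $a\geq 0$, and maintaining uniform $\Delta_2$-control throughout the case analysis. Both facts follow from ellipticity, but require careful regime splitting; once in hand, both estimates reduce to the power-type scaling and the case distinction above.
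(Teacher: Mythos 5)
The paper itself states this lemma without proof, citing it as standard from~\cite{DieE08,DieR07}, so your proposal has to stand on its own. Its overall architecture is the right one: reduce everything to $\rho_a(t)\sim\rho''(a+t)\,t^2$, prove the first inequality by a case distinction, and deduce the dual inequality from the first via $(\rho_a)^*\sim(\rho^*)_{\rho'(a)}$, $\abs{\rho'(\abs{\bfa})-\rho'(\abs{\bfb})}\lesssim(\rho_{\abs{\bfa}})'(\abs{\bfa-\bfb})$ and $(\rho_{\abs{\bfa}})^*\bigl((\rho_{\abs{\bfa}})'(s)\bigr)\sim\rho_{\abs{\bfa}}(s)$. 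That second half is correct as written (up to shrinking $\delta$ by the fixed constants hidden in these equivalences), and your close case is also fine for $\theta<1$.

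The far case, however, contains a genuine error. From $\abs{\bfa-\bfb}>\theta(\abs{\bfa}+\abs{\bfb}+t)$ you only get $t\le\theta^{-1}\abs{\bfa-\bfb}$, i.e.\ $t$ may \emph{exceed} $\abs{\bfa-\bfb}$ by the factor $\theta^{-1}$. Bringing the argument down from $\theta^{-1}\abs{\bfa-\bfb}$ to $\abs{\bfa-\bfb}$ takes about $\log_2(\theta^{-1})$ doubling steps — not a bounded number — and costs the factor $\theta^{-p_1}$; the Matuszewska inequality $\rho_a(\theta s)\le c\,\theta^{p_0}\rho_a(s)$ points in the opposite direction from the one you need. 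Concretely, take $\bfa=0$ and $t=\abs{\bfb}=\abs{\bfa-\bfb}$: this lies in your far case for every $\theta<1/2$, yet $\rho_{\abs{\bfa}}(t)=\rho(\abs{\bfa-\bfb})\sim\rho_{\abs{\bfa}}(\abs{\bfa-\bfb})$, so no bound $\delta\,\rho_{\abs{\bfa}}(\abs{\bfa-\bfb})$ with small $\delta$ is available; here the estimate must come from the term $C_\delta\,\rho_{\abs{\bfb}}(t)$ (and indeed $\rho_{\abs{\bfb}}(t)=\rho_t(t)\sim\rho(t)$ by Lemma~\ref{lem:shift_sim}). The repair is to choose the dichotomy differently: split on $t\le\epsilon\abs{\bfa-\bfb}$ versus $t>\epsilon\abs{\bfa-\bfb}$. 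In the first regime monotonicity and your scaling give $\rho_{\abs{\bfa}}(t)\le\rho_{\abs{\bfa}}(\epsilon\abs{\bfa-\bfb})\le c\,\epsilon^{q}\rho_{\abs{\bfa}}(\abs{\bfa-\bfb})$ with $q=\min\set{2,p_0}>0$, which produces the $\delta$; in the second regime $\abs{\abs{\bfa}-\abs{\bfb}}\le\abs{\bfa-\bfb}<\epsilon^{-1}t$ forces $\abs{\bfa}+t$ and $\abs{\bfb}+t$ to be comparable with constants depending only on $\epsilon$, hence $\rho''(\abs{\bfa}+t)\sim\rho''(\abs{\bfb}+t)$ and $\rho_{\abs{\bfa}}(t)\le C_\epsilon\,\rho_{\abs{\bfb}}(t)$, which is exactly your close-case mechanism. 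With this change the argument closes.
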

The case $\bfa=0$ or $\bfb=0$ implies the following corollary.
\begin{corollary}[Removal of Shift]
  \label{cor:shift_ch}
  Let $\rho$ be an elliptic N-function. Then for each $\delta>0$ there
  exists $C_\delta \geq 1$ (only depending on~$\delta$ and the
  characteristics of~$\rho$) such that
  \begin{align*}
    \rho_{\abs{\bfa}}(t)&\leq C_\delta\, \rho(t) +\delta\,
    \rho(\abs{\bfa}),
    \\
    \rho(t)&\leq C_\delta\, \rho_{\abs{\bfa}}(t) +\delta\,
    \rho(\abs{\bfa}),
  \end{align*}
  for all $\bfa\in\setR^n$ and $t\geq0$.
\end{corollary}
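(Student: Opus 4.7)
The plan is to derive both inequalities directly from the Change of Shift lemma (Lemma~\ref{lem:shift_ch}) by specializing one of the two arguments to zero, and then to reconcile the residual shifted term with the unshifted one via Lemma~\ref{lem:shift_sim}.

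For the first inequality, I would apply Lemma~\ref{lem:shift_ch} with $\bfb = 0$. Since $\rho_0 = \rho$ (immediate from the definition of the shift, as $\rho_0(t) = \int_0^t \rho'(\tau)\tau/\tau\,d\tau = \rho(t)$), this yields
\[
\rho_{\abs{\bfa}}(t) \leq C_\delta\,\rho(t) + \delta\,\rho_{\abs{\bfa}}(\abs{\bfa}).
\]
Now by Lemma~\ref{lem:shift_sim} with $a = \abs{\bfa}$ and $t = \abs{\bfa}$, we are in the regime $t \gtrsim a$, so $\rho_{\abs{\bfa}}(\abs{\bfa}) \sim \rho(\abs{\bfa})$ with constants depending only on the characteristics of $\rho$. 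Absorbing this equivalence constant into $\delta$ (that is, starting from $\delta' := \delta/c$ in the original application), we obtain the claimed inequality with a new $C_{\delta}$.

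For the second inequality, I would apply Lemma~\ref{lem:shift_ch} with the roles reversed, that is, taking the $\bfa$ of the lemma to be $0$ and its $\bfb$ to be the $\bfa$ of the corollary. Again using $\rho_0 = \rho$, this reads
\[
\rho(t) \leq C_\delta\,\rho_{\abs{\bfa}}(t) + \delta\,\rho(\abs{0 - \bfa}) = C_\delta\,\rho_{\abs{\bfa}}(t) + \delta\,\rho(\abs{\bfa}),
\]
which is exactly the second inequality (here no invocation of Lemma~\ref{lem:shift_sim} is even needed because the residual term is already $\rho(\abs{\bfa})$).

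There is no serious obstacle: the corollary is a straightforward specialization of Lemma~\ref{lem:shift_ch}. The only mild point to be careful with is the rescaling of $\delta$ in the first inequality so that the equivalence constant coming from Lemma~\ref{lem:shift_sim} is absorbed cleanly; since the equivalence constant depends only on the characteristics of $\rho$, this rescaling only affects the dependence of $C_\delta$ on $\delta$ and the characteristics, which is exactly the dependence allowed in the statement.
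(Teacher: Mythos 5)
Your proposal is correct and follows exactly the paper's route: the paper derives the corollary as the special cases $\bfa=0$ or $\bfb=0$ of the Change of Shift lemma, which is precisely your two specializations. Your extra care in converting $\rho_{\abs{\bfa}}(\abs{\bfa})$ to $\rho(\abs{\bfa})$ via Lemma~\ref{lem:shift_sim} (or equivalently Lemma~\ref{lem:shifted2} with $\lambda=1$) and rescaling $\delta$ is the right way to make the one-line remark in the paper rigorous.
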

\begin{lemma}
  \label{lem:shifted2}
  Let $\rho$ be an elliptic N-function. Then $(\rho_a)^*(t) \sim
  (\rho^*)_{\rho'(a)}(t)$ uniformly in $a,t \geq 0$. Moreover, for all
  $\lambda \in [0,1]$ we have
  \begin{align*}
    \rho_a(\lambda a) &\sim \lambda^2 \rho(a) \sim
    (\rho_a)^*(\lambda \rho'(a)).
  \end{align*}
\end{lemma}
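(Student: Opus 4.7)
The plan is to reduce both assertions to Lemma \ref{lem:shift_sim} combined with (i) the N-function duality identity $\psi^*(s)\sim s\,(\psi')^{-1}(s)$, valid whenever $\Delta_2(\psi,\psi^*)<\infty$ and hence guaranteed by ellipticity; (ii) the ellipticity consequences $\rho(a)\sim a\rho'(a)\sim a^2\rho''(a)$ together with $(\rho^*)''(\rho'(a))=1/\rho''(a)$, the latter by differentiating the inverse relation $(\rho^*)'\circ\rho'=\mathrm{id}$; and (iii) the classical fact that $\rho$ elliptic implies $\rho^*$ elliptic with characteristics depending only on those of $\rho$, so that Lemma \ref{lem:shift_sim} may be applied to $\rho^*$ as well.

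For the first equivalence $(\rho_a)^*(t)\sim (\rho^*)_{\rho'(a)}(t)$ I would give a two-regime description of each side and check that they coincide. On the right, Lemma \ref{lem:shift_sim} applied to $\rho^*$ with shift $b=\rho'(a)$ yields
\begin{align*}
(\rho^*)_{\rho'(a)}(t)\sim
\begin{cases}(\rho^*)''(\rho'(a))\,t^2\;=\;t^2/\rho''(a) & t\lesssim \rho'(a),\\ \rho^*(t) & t\gtrsim \rho'(a).\end{cases}
\end{align*}
On the left, ellipticity gives $(\rho_a)'(t)\sim \rho''(a+t)\,t$, which by Lemma \ref{lem:shift_sim} applied to the factor $\rho''(a+t)$ behaves as $\rho''(a)\,t$ for $t\lesssim a$ and as $\rho'(t)$ for $t\gtrsim a$. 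Inverting this piecewise description, the threshold $t\sim a$ maps to $s\sim a\rho''(a)\sim\rho'(a)$, so $((\rho_a)')^{-1}(s)\sim s/\rho''(a)$ for $s\lesssim \rho'(a)$ and $((\rho_a)')^{-1}(s)\sim(\rho^*)'(s)$ for $s\gtrsim \rho'(a)$. Multiplying by $s$ via the duality identity reproduces exactly the right-hand two-regime formula.

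The $\lambda^2$-equivalences fall out of the small-argument branch. Since $\lambda\in[0,1]$, both $\lambda a\leq a$ and $\lambda\rho'(a)\leq\rho'(a)$ lie in the first regime, so Lemma \ref{lem:shift_sim} yields $\rho_a(\lambda a)\sim \rho''(a)\lambda^2 a^2\sim \lambda^2\rho(a)$, and the two-regime expression for $(\rho_a)^*$ gives $(\rho_a)^*(\lambda\rho'(a))\sim \lambda^2(\rho'(a))^2/\rho''(a)\sim \lambda^2 a^2\rho''(a)\sim \lambda^2\rho(a)$. The main point of care is bookkeeping: every $\sim$ must be uniform in $a,t,\lambda$ with constants controlled by the characteristics of $\rho$. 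The nontrivial check is that the two regimes of $(\rho^*)_{\rho'(a)}$ meet at $t\sim\rho'(a)$, i.e. $(\rho'(a))^2/\rho''(a)\sim\rho^*(\rho'(a))$, which is just the Fenchel identity $\rho^*(\rho'(a))=a\rho'(a)-\rho(a)\sim\rho(a)$ combined with $\rho'(a)\sim a\rho''(a)$; and that ellipticity transfers quantitatively from $\rho$ to $\rho^*$, so the shift machinery is available on the dual side with comparable constants.
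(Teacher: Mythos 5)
Your argument is correct. Note that the paper itself gives no proof of this lemma: it is quoted in the appendix as a standard fact from the Orlicz-space literature (\cite{DieE08,DieR07}), so there is no ``paper proof'' to compare against; your two-regime derivation is essentially the standard one from those references. The key steps all check out: $(\rho_a)'(t)=\tfrac{\rho'(a+t)}{a+t}\,t\sim\rho''(a+t)\,t$, the doubling of $\rho''$ (a consequence of ellipticity, not literally of Lemma~\ref{lem:shift_sim}, which concerns $\rho_a$ itself --- a cosmetic imprecision in your write-up) gives the two regimes $\rho''(a)t$ for $t\lesssim a$ and $\rho'(t)$ for $t\gtrsim a$, inversion is legitimate because both descriptions are comparable to increasing doubling functions, and the duality identity $\psi^*(s)\sim s(\psi')^{-1}(s)$ then yields the claimed match with $(\rho^*)_{\rho'(a)}$. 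Two points deserve explicit mention if you write this out in full: first, the duality identity must be applied to $\psi=\rho_a$ with constants uniform in $a$, which requires knowing that the characteristics (hence $\Delta_2$-constants) of $\rho_a$ are bounded uniformly in $a\geq 0$ --- this follows from your formula $(\rho_a)''(t)\sim\rho''(a+t)$ together with ellipticity, but it should be stated; second, the matching of the two regimes at the threshold, i.e.\ $(\rho'(a))^2/\rho''(a)\sim\rho^*(\rho'(a))$, uses not only the Fenchel identity $\rho^*(\rho'(a))=a\rho'(a)-\rho(a)$ but also the quantitative bound $\rho(a)\leq(1-\epsilon)\,a\rho'(a)$ with $\epsilon$ depending only on the characteristics (obtained from the lower bound $\rho''(t)\gtrsim\rho'(t)/t$), which you correctly identify but should not gloss over. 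The $\lambda^2$-equivalences then follow exactly as you say from the small-argument branches.
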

\begin{lemma}
  \label{lem:shiftedindex}
  Let $\rho(t) := \int_0^t (\kappa+s)^{q-2} s\,ds$ with $q \in
  (1,\infty)$ and $t\geq 0$. Then 
  \begin{align*}
    \rho_a(\lambda t) &\leq c\, \max\set{\lambda^q, \lambda^2}
    \rho(t),
    \\
    (\rho_a)^*(\lambda t) &\leq c\, \max\set{\lambda^{q'}, \lambda^2}
    \rho(t),
  \end{align*}
  uniformly in $a,\lambda \geq 0$.
\end{lemma}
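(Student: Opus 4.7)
The plan is to reduce both inequalities to the following structural fact: for any elliptic N-function $\psi$ whose logarithmic derivative satisfies $p^-\leq t\psi'(t)/\psi(t)\leq p^+$ uniformly in $t>0$, one has the two-sided scaling $\psi(\lambda t)\leq c\max\{\lambda^{p^-},\lambda^{p^+}\}\psi(t)$ for all $\lambda\geq 0$. I will then apply this to $\psi=\rho_a$ with $(p^-,p^+)=(\min\{2,q\},\max\{2,q\})$ to get the first inequality, and to $\psi=(\rho_a)^\ast$ with exponents $(\min\{2,q'\},\max\{2,q'\})$ to get the second.

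The first step is to verify the uniform ellipticity bounds for $\rho_a$. From the definition one computes $\rho_a'(t)=(\kappa+a+t)^{q-2}t$, and a case split is natural. When $t\leq \kappa+a$, the integrand defining $\rho_a(t)$ is comparable to $(\kappa+a)^{q-2}s$, so $\rho_a(t)\sim (\kappa+a)^{q-2}t^2/2$ and $t\rho_a'(t)\sim (\kappa+a)^{q-2}t^2$, giving a ratio $\sim 2$. When $t\geq \kappa+a$, the tail contribution dominates: $\rho_a(t)\sim \int_{\kappa+a}^t s^{q-1}\,ds\sim t^q/q$, while $t\rho_a'(t)\sim t^q$, giving a ratio $\sim q$. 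Combining both regimes (with constants depending only on $q$) yields $\min\{q,2\}\lesssim t\rho_a'(t)/\rho_a(t)\lesssim \max\{q,2\}$ uniformly in $a\geq 0$ and $t>0$. Note this is consistent with Lemma~\ref{lem:shift_sim}, which identifies the same two regimes.

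The second step is the scaling itself. Fix $t>0$ and set $g(\mu):=\rho_a(\mu t)$, so that $\mu g'(\mu)/g(\mu)=\mu t\rho_a'(\mu t)/\rho_a(\mu t)\in[p^-,p^+]$ with $p^\pm$ as above. Then $(\log g)'(\mu)\in[p^-/\mu,p^+/\mu]$, and integrating on $[1,\lambda]$ when $\lambda\geq 1$ (respectively on $[\lambda,1]$ when $\lambda\leq 1$) gives $g(\lambda)\leq \lambda^{p^+}g(1)$ in the first case and $g(\lambda)\leq \lambda^{p^-}g(1)$ in the second. Since one of $2$ and $q$ is the min and the other the max, $\max\{\lambda^{p^-},\lambda^{p^+}\}=\max\{\lambda^q,\lambda^2\}$, which is exactly the claimed factor.

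For the conjugate inequality I would use the general duality principle that an N-function $\psi$ with $t\psi'(t)/\psi(t)\in[p^-,p^+]$ has a conjugate $\psi^\ast$ whose logarithmic derivative lies in $[(p^+)',(p^-)']$; this follows by implicitly differentiating $\psi^\ast(\psi'(t))=t\psi'(t)-\psi(t)$ and inserting the bounds on $\psi'/\psi$. Applied to $\rho_a$ this gives $t(\rho_a)^{\ast\prime}(t)/(\rho_a)^\ast(t)\in[\min\{2,q'\},\max\{2,q'\}]$, and the argument of the previous paragraph applied to $(\rho_a)^\ast$ produces the bound with $\max\{\lambda^{q'},\lambda^2\}$. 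The only real obstacle is making the ellipticity estimate for $\rho_a$ in Step~1 genuinely uniform in $a\geq 0$ across the transition $t\sim\kappa+a$; this is handled by the case split above, after which everything is a standard integration of a logarithmic derivative.
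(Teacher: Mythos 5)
Your overall strategy --- two-sided bounds on the index $t\rho_a'(t)/\rho_a(t)$, integration of the logarithmic derivative, and dualization of the index interval for $(\rho_a)^*$ --- is a standard and viable route; the paper states this lemma in the appendix without proof, so there is no in-paper argument to compare against. One preliminary remark: you are in fact proving the inequalities with $\rho_a(t)$ and $(\rho_a)^*(t)$ on the right-hand side. As literally printed, with the unshifted $\rho(t)$ on the right, the lemma is false (take $\lambda=1$, $a=\kappa=0$, $q>2$ and $t$ small in the second inequality), so the printed statement contains a typo and your reading is the intended one; it is also the form in which the lemma is used in the proof of Lemma~\ref{lem:appr1}.

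The genuine gap is the mismatch between your Step~1 and Step~2. Step~1 only yields $\min\{2,q\}\lesssim t\rho_a'(t)/\rho_a(t)\lesssim\max\{2,q\}$, i.e.\ the index bounds up to multiplicative constants inherited from the comparabilities $\rho_a(t)\sim(\kappa+a)^{q-2}t^2$ resp.\ $\rho_a(t)\sim t^q$. Step~2, however, integrates $(\log g)'(\mu)$ and needs the \emph{exact} bounds $\min\{2,q\}\leq \mu g'(\mu)/g(\mu)\leq\max\{2,q\}$: if you only know $t\rho_a'(t)/\rho_a(t)\leq C\max\{2,q\}$ with some $C>1$, integration gives $\rho_a(\lambda t)\leq\lambda^{C\max\{2,q\}}\rho_a(t)$ for $\lambda\geq1$, which does not imply $\rho_a(\lambda t)\leq c\,\lambda^{\max\{2,q\}}\rho_a(t)$ as $\lambda\to\infty$. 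The exact bounds do hold uniformly in $a\geq 0$, but they require a short direct argument rather than the two-regime comparability: writing $b:=\kappa+a$ and $\rho_a'(t)=(b+t)^{q-2}t$, for $q\leq2$ one has $(b+\tau)^{q-2}\geq(b+t)^{q-2}$ for $\tau\leq t$, whence $2\rho_a(t)\geq(b+t)^{q-2}t^2=t\rho_a'(t)$; for $q\geq2$ the function $H(t):=q\rho_a(t)-t\rho_a'(t)$ satisfies $H(0)=0$ and $H'(t)=(q-2)(b+t)^{q-3}tb\geq0$, so $t\rho_a'(t)\leq q\rho_a(t)$. The lower bounds are obtained symmetrically. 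Once these exact inequalities are in place, your Step~2 and the duality computation $s\,((\rho_a)^*)'(s)/(\rho_a)^*(s)=r/(r-1)$ with $r=t\rho_a'(t)/\rho_a(t)\in[\min\{2,q\},\max\{2,q\}]$ (note $\min\{2,q\}>1$) are correct and deliver the stated powers $\max\{\lambda^q,\lambda^2\}$ and $\max\{\lambda^{q'},\lambda^2\}$.
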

\begin{remark}
  Let $p\in\PP(\Omega)$ with $p^{-}>1$ and $p^+<\infty$.  The
  results above extend to the function $\phi(x,t)=\int_0^t
  (\kappa+s)^{p(x)-2}s\,ds$ uniformly in $x\in\Omega$, where the
  constants only depend on $p^-$ and $p^+$.
\end{remark}
\def\polhk#1{\setbox0=\hbox{#1}{\ooalign{\hidewidth
  \lower1.5ex\hbox{`}\hidewidth\crcr\unhbox0}}}
  \def\ocirc#1{\ifmmode\setbox0=\hbox{$#1$}\dimen0=\ht0 \advance\dimen0
  by1pt\rlap{\hbox to\wd0{\hss\raise\dimen0
  \hbox{\hskip.2em$\scriptscriptstyle\circ$}\hss}}#1\else {\accent"17 #1}\fi}
  \def\ocirc#1{\ifmmode\setbox0=\hbox{$#1$}\dimen0=\ht0 \advance\dimen0
  by1pt\rlap{\hbox to\wd0{\hss\raise\dimen0
  \hbox{\hskip.2em$\scriptscriptstyle\circ$}\hss}}#1\else {\accent"17 #1}\fi}
  \def\ocirc#1{\ifmmode\setbox0=\hbox{$#1$}\dimen0=\ht0 \advance\dimen0
  by1pt\rlap{\hbox to\wd0{\hss\raise\dimen0
  \hbox{\hskip.2em$\scriptscriptstyle\circ$}\hss}}#1\else {\accent"17 #1}\fi}
  \def\ocirc#1{\ifmmode\setbox0=\hbox{$#1$}\dimen0=\ht0 \advance\dimen0
  by1pt\rlap{\hbox to\wd0{\hss\raise\dimen0
  \hbox{\hskip.2em$\scriptscriptstyle\circ$}\hss}}#1\else {\accent"17 #1}\fi}
  \def\cprime{$'$} \def\ocirc#1{\ifmmode\setbox0=\hbox{$#1$}\dimen0=\ht0
  \advance\dimen0 by1pt\rlap{\hbox to\wd0{\hss\raise\dimen0
  \hbox{\hskip.2em$\scriptscriptstyle\circ$}\hss}}#1\else {\accent"17 #1}\fi}
  \def\polhk#1{\setbox0=\hbox{#1}{\ooalign{\hidewidth
  \lower1.5ex\hbox{`}\hidewidth\crcr\unhbox0}}} \def\cprime{$'$}
  \def\cprime{$'$} \def\cprime{$'$} \def\cprime{$'$}
\providecommand{\bysame}{\leavevmode\hbox to3em{\hrulefill}\thinspace}
\providecommand{\MR}{\relax\ifhmode\unskip\space\fi MR }
\providecommand{\MRhref}[2]{%
  \href{http://www.ams.org/mathscinet-getitem?mr=#1}{#2}
}
\providecommand{\href}[2]{#2}

\end{document}